\numberwithin{equation}{section}
\theoremstyle{plain} 
\newtheorem{theorem}{Theorem}[section]
\newtheorem{corollary}[theorem]{Corollary}
\newtheorem{lemma}[theorem]{Lemma}
\newtheorem{prop}[theorem]{Proposition}
\theoremstyle{definition}
\newtheorem{definition}[theorem]{Definition}
\newtheorem{example}[theorem]{Example}
\newtheorem{conjecture}[theorem]{Conjecture}
\newtheorem{notation}[theorem]{Notation}
\newtheorem{assumption}[theorem]{Assumption}
\theoremstyle{remark}
\newtheorem{remark}[theorem]{Remark}
\newcommand{\PP}{\mathbb{P}}
\newcommand{\Z}{\mathbb{Z}}
\newcommand{\R}{\mathbb{R}}
\newcommand{\C}{\mathbb{C}}
\newcommand{\bL}{\mathbb{L}}
\newcommand{\CA}{\mathcal{A}}
\newcommand{\CB}{\mathcal{B}}
\newcommand{\CC}{\mathcal{C}}
\newcommand{\CF}{\mathcal{F}}
\newcommand{\ks}{\mathfrak{ks}}
\newcommand{\OL}[1]{\overline{#1}}
\newcommand{\AI}{{A_\infty}}
\newcommand{\Jac}{{\rm Jac}}
\newcommand{\chicheck}{\check{\chi}}
\newcommand{\cchi}{\check{\chi}}
\newcommand{\HG}{{\widehat{G}}}
\newcommand{\id}{\mathrm{id}}
\newcommand{\wG}{\widehat{G}}
\newcommand{\WT}[1]{\widetilde{#1}}
\newcommand{\WH}[1]{\widehat{#1}}
\newcommand{\Hom}{{\rm Hom}}
\newcommand{\be}{\mathbf{1}}
\begin{document}
\title[Kodaira-Spencer map, Lagrangian Floer theory and orbifold Jacobian algebras]{Kodaira-Spencer map, Lagrangian Floer theory \\ and orbifold Jacobian algebras}
\author{Cheol-Hyun Cho}
\author{Sangwook Lee}
\address{Cheol-Hyun Cho: Department of Mathematical Sciences, Research Institute in Mathematics, Seoul National University, Gwanak-gu, Seoul, South Korea}
\email{chocheol@snu.ac.kr}
\address{Sangwook Lee: Korea Institute for Advanced Study\\ 85 Hoegi-ro Dongdaemun-gu, Seoul 02455, South Korea}
\email{swlee@kias.re.kr}
\begin{abstract}
A version of mirror symmetry predicts a ring isomorphism between quantum cohomology of a symplectic manifold and Jacobian algebra of the Landau-Ginzburg mirror, and for toric manifolds Fukaya-Oh-Ohta-Ono constructed such a map called Kodaira-Spencer map using Lagrangian Floer theory.
We discuss a general construction of Kodaira-Spencer ring homomorphism when LG mirror potential $W$ is given by $J$-holomorphic discs with boundary on a Lagrangian $L$:
We find an $\AI$-algebra $\mathcal{B}$ whose $m_1$-complex is a Koszul complex for $W$ under mild assumptions on $L$.
Closed-open map gives a ring homomorphism from quantum cohomology to cohomology algebra of $\CB$ which is Jacobian algebra of $W$.

We also construct an equivariant version for orbifold LG mirror $(W,H)$.  We construct a Kodaira-Spencer map
from quantum cohomology to another $\AI$-algebra $(\CB\rtimes H)^H$ whose cohomology algebra is isomorphic to the orbifold Jacobian algebra of $(W,H)$ under an assumption. 
For the $2$-torus whose mirror is an orbifold LG model given by Fermat cubic with a $\Z/3$-action,
we compute an explicit Kodaira-Spencer isomorphism.
\end{abstract}
\maketitle

\section{Introduction}
Let $X$ be a symplectic manifold with a possibly immersed Lagrangian submanifold $\bL$ on it.
Fukaya, Oh, Ohta and Ono defined an $\AI$-algebra for $\bL$ in \cite{FOOO} (Akaho-Joyce \cite{AJ} for immersed case) as well as a potential function $W_\bL:Y \to \Lambda$ on its deformation space $Y$ using $J$-holomorphic discs. 
Suppose  $\bL$ is essential in $X$ in the sense that it generates Fukaya category of $X$, or
more informally  $\bL$ is a representing Lagrangian object for the skeleton of $X \setminus D$ for
a choice of  anti-canonical divisor $D$ of $X$. Then $W_\bL$ may be taken as a 
mirror Landau-Ginzburg model for $X$(\cite{A}).  Main examples are Lagrangian torus fibers in  toric manifolds \cite{CO}, \cite{FOOO}, and Seidel Lagrangian in orbi-sphere $\mathbb{P}^1_{a,b,c}$ \cite{S},\cite{E}, \cite{CHL}. Higher dimensional analogue of the latter example is an immersed sphere constructed by Sheridan in higher dimensional pair of pants \cite{Sh} or in  Fermat type hypersurfaces \cite{Sh2} although weakly unobstructedness of $\bL$ in these cases are not yet proved in general. 

Once $W_\bL$ is constructed from Maurer-Cartan equation of $\bL \subset X$,  the first author with Hong and Lau constructed a canonical $\AI$-functor from Fukaya category of $X$ to 
the matrix factorization $\AI$-category of $W$, called a localized mirror functor \cite{CHL,CHL2}. This functor is constructed as a curved version of Yoneda embedding relative to a fixed Lagrangian $\bL$, called reference Lagrangian. Hence the construction of the functor is based on Fukaya category operations and provides a  geometric way to understand homological mirror symmetry between these mirror pairs at hand. 

Let us turn our attention to closed string mirror symmetry.  Historically, closed string mirror symmetry between
quantum cohomology and Jacobian algebra of $W$ (or their Frobienius manifolds) has been proved  much earlier (Batyrev \cite{B}, 
Givental \cite{Gi}, Iritani \cite{Ir} and so on) than homological mirror symmetry. These approaches are based on study of Gromov-Witten invariants and Picard-Fuchs equations. 

Fukaya, Oh, Ohta and Ono \cite{FOOO_MS} introduced a Lagrangian Floer theoretic approach to study closed string mirror symmetry.
Namely, they constructed a geometric map, called Kodaira-Spencer map from quantum cohomology to Jacobian algebra of $W$
in the case of toric manifolds, and $T^n$-action plays an essential role there. 
Here Jacobian algebra of $W$ (which is also called Milnor ring) is a polynomial algebra modulo the ideal generated by the partial derivatives of $W_\bL$.
 Recently,  Amorim, Hong, Lau and the first author \cite{ACHL} generalized their construction to the case of $\mathbb{P}^1_{a,b,c}$ orbifold by using  $\Z/2$-action instead of $T^n$-action.  
 
 In this paper, we define Kodaira-Spencer ring homomorphism in much more generality, including the case of any weakly unobstructed Lagrangian torus $\bL$ in
 a symplectic manifold $M$ with Lagrangian Floer potential $W_\bL$.  The main idea is to replace Jacobian ring  with the Koszul complex of $W_\bL$, which
can be geometrically constructed from  Maurer-Cartan theory of the Lagrangian submanifold. 

Kodaira-Spencer map  provides a nice geometric explanation for the isomorphism between complicated quantum multiplication and a trivial multiplication in Jacobian algebra, which can be easily extended for bulk deformations. Also, as shown in \cite{FOOO_MS}, this provides a natural setting to compare Poincar\'e duality pairing of symplectic manifolds and residue pairing of $W_\bL$ in complex geometry. More recently, we have shown in \cite{CLS} that these two pairings are conformally equivalent with conformal factor given by the ratio of quantum volume form and classical volume form of $\bL$.  The results of \cite{CLS} was restricted to the case of
toric manifolds and $\mathbb{P}^1_{a,b,c}$ due to the absence of general Kodaira-Spencer map.
Therefore, the construction in this paper can be used to prove a generalization of the results in \cite{CLS} for the cases of $(M,\bL)$ satisfying Cardy relations.

Often mirror symmetry involves a finite group symmetry. In the very first example of Fermat quintic 3-folds \cite{COGP}
as well as in more recent works of Seidel \cite{S1}, \cite{S} and Sheridan \cite{Sh} such symmetry plays an important role for the proof of HMS conjecture.
In this paper, we will consider a closed string mirror symmetry between $QH^*(X)$ and orbifold Jacobian algebra $\Jac(W,\WH{G})$.
As far as the authors know, there has been no known examples, partly because the product structure of $\Jac(W,\WH{G})$ is
just beginning to be understood.   Let us suppose that $\WH{G}$ is a finite abelian group acting linearly on variables and $W$ is a $\WH{G}$-invariant polynomial. 
Orbifold Jacobian algebra $\Jac(W,\WH{G})$ is easy to describe as a module, which is given by the Jacobian algebra
of $W$ restricted to the fixed part of $\chi$-action for each $\chi \in \WH{G}$ (see Definition \ref{def:ojr2}).  But it has quite an interesting and mysterious product structure, which has been the focus of several interesting recent works (see \cite{BT}, \cite{BTW}, \cite{HLL}, \cite{Shk}).

In this paper, we  will define an equivariant version of  Kodaira-Spencer map, namely a ring homomorphism from quantum cohomology to the orbifold Jacobian algebra
under some assumption on the Lagrangian. We will describe an explicit equivariant Kodaira-Spencer isomorphism for $T^2$ at the end.

Let us give more detailed summary of our construction and precise theorems.
We first recall the idea behind the construction of Kodaira-Spencer map by Fukaya-Oh-Ohta-Ono.
Recall that closed-open map in $A$-model usually defines a map from quantum cohomology of $X$ to a Hochschild cohomology of Fukaya category of $X$.
Fukaya-Oh-Ohta-Ono observed that instead of the full Hochschild cochains, one may focus on the cochains with no input and one output,
coming from holomorphic discs with boundary on a Lagrangian torus fiber $\bL$.
Since  Lagrangian $\bL$ is given by an orbit of $T^n$-action, there is a free $T^n$-action on the moduli space of holomorphic discs with  at least one boundary marked point. 
Given $\alpha \in QH^*(X)$,  the (virtual) evaluation image on $\bL$ of $J$-holomorphic discs which intersect $\alpha \in QH^*(X)$ in the interior
is always a multiple of fundamental class $[\bL]$ (from $T^n$-action) and $\mathfrak{ks}:QH^*(X) \to \Jac(W_\bL)$ is defined by
reading the coefficient of $[\bL]$. 
But unfortunately simple dimension counting  of closed-open map shows that  the image of a closed-open map is not expected to become a multiple of $[\bL]$ in general and this was the main obstacle to define Kodaira-Spencer map beyond the above examples.

A key idea is to replace Jacobian algebra by an associated Koszul complex and to observe that MC formalism of $\bL$
provides such a complex under a mild assumption on Lagrangian $\bL$. More precisely, given an $\AI$-algebra $\mathcal{A}$ of $\bL$, we  define a new $\AI$-algebra $\mathcal{B}$
by tensoring $\Lambda[x_1,\cdots,x_n]$ and by considering an $\AI$-operation $m_k^b$ for the family of weak MC elements $b = \sum x_i X_i$.
The cohomology of $\mathcal{B}$ is isomorphic to Jacobian ring of $W_\bL$ under a simple assumption \ref{as1},
which states that for $\AI$-algebra $\mathcal{A}$, degree one cohomology classes $X_1,\cdots,X_n$ generate the cohomology algebra $H^*(\mathcal{A})$ of rank $2^n$. This assumption is satisfied for any Lagrangian torus in a symplectic manifold or Seidel Lagrangian in $\mathbb{P}^1_{a,b,c}$. 

With this setup, we  use the closed-open map with boundary deformation by MC elements $b$ to define a ring homomorphism 
$$\mathfrak{ks}: QH^*(X) \to H^*(\mathcal{B})_{alg}.$$
Here we denote by $H^*(\mathcal{B})_{alg}$  the cohomology algebra of the complex $(\mathcal{B},m_1^b)$ with an associative
product $v \cdot w := (-1)^{|v|} m_2^b(v,w)$. 

Our setup enables us to apply the construction of Fukaya, Oh, Ohta and Ono \cite{FOOO_MS} ( small modification of \cite{ACHL})
to define a general (localized) Kodaira-Spencer map. Note that we do not require any symmetry on $\bL$ other than its weakly unobstructedness.
\begin{theorem}[Theorem \ref{thm:ks}]\label{thm:1}
Let $L$ be an object of Fukaya category of $M$ and let $\mathcal{A}(L)$ be its Fukaya algebra, with bounding cochain $b$
and potential function $W$. Denote by $\mathcal{B}(L)$ the $\AI$-algebra constructed in Definition \ref{def:B}.
There is a  Kodaira-Spencer map which is a ring homomorphism
$$\mathfrak{ks}:QH^*(M,\Lambda) \to H^*(\mathcal{B}(L))_{alg}.$$
Under the Assumption \ref{as1}, $H^*(\mathcal{B}(L))_{alg}$ is isomorphic to $\Jac(W_L)$ by Proposition \ref{prop:bjac}.
\end{theorem}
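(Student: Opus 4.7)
The plan is to adapt the closed-open/Kodaira-Spencer construction of Fukaya-Oh-Ohta-Ono \cite{FOOO_MS} and Amorim-Cho-Hong-Lau \cite{ACHL} to the present setting, with the role previously played by the $T^n$- or $\Z/2$-symmetry replaced by the Koszul-complex structure built into $\CB(L)$. Concretely, for a closed cycle $\alpha$ representing a class in $QH^*(M,\Lambda)$, I would set
\[
\ks(\alpha) \;:=\; \sum_{k\geq 0} \fp_{1,k}(\alpha;\, b,\ldots,b),
\]
where $\fp_{1,k}\colon QC^*(M)\otimes \CA(L)^{\otimes k}\to \CA(L)$ is the length-$k$ closed-open operator counting $J$-holomorphic discs with one interior marked point sent to a cycle Poincar\'e dual to $\alpha$, one boundary output, and $k$ cyclically-ordered boundary inputs labeled by the bounding cochain $b=\sum_i x_i X_i$. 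The polynomial coefficients in the $x_i$ promote the output to an element of $\CB(L)=\CA(L)\otimes\Lambda[x_1,\ldots,x_n]$.

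Second, I would show that $\ks(\alpha)$ is $m_1^b$-closed and depends only on $[\alpha]$. The $\AI$-relations for the operators $\fp_{1,k}$, combined with the weak Maurer-Cartan equation $\sum_k m_k(b,\ldots,b)=W\cdot \be$, reorganize the codimension-one boundary strata of the underlying moduli spaces into $m_1^b\,\ks(\alpha)+\ks(d\alpha)$. Both $m_1^b$-closedness and invariance under change of cocycle representative follow immediately.

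Third, to verify multiplicativity $\ks(\alpha\ast\beta)=\ks(\alpha)\cdot\ks(\beta)$ with the associative product $v\cdot w = (-1)^{|v|}m_2^b(v,w)$ on $H^*(\CB(L))_{alg}$, I would use a cobordism of moduli spaces of $b$-deformed discs with two interior marked points: one end of the cobordism collides the two interior points and produces $\alpha\ast\beta$ via the standard splitting of closed-open operations through the quantum product, while the opposite end degenerates along a boundary node into two discs carrying the closed-open operations for $\alpha$ and $\beta$ respectively, producing $m_2^b(\ks(\alpha),\ks(\beta))$. Unitality $\ks(\be_{QH})=\be_{\CB}$ is a constant-disc computation, and the final identification $H^*(\CB(L))_{alg}\cong\Jac(W_L)$ under Assumption \ref{as1} is supplied directly by Proposition \ref{prop:bjac}.

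The delicate point is the ring-homomorphism step. In the toric setting of FOOO, the $T^n$-action forced the closed-open output to be a multiple of $[\bL]$, after which multiplicativity was relatively immediate; here $\ks(\alpha)$ is genuinely a mixed element of $\CB(L)$, so one must track contributions of every tensor degree in the $x_i$. Signs, the compatibility of $b$-insertions with gluing across a boundary node, and the precise emergence of the Koszul-deformed product $m_2^b$ on the right-hand side are the main technical tasks; these are handled by working cyclically in $\CA(L)\otimes\Lambda[x_1,\ldots,x_n]$ and reducing to the $\AI$-relations for $\CA(L)$, with the underlying virtual-perturbation framework parallel to that of \cite{FOOO_MS}.
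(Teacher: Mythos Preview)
Your proposal is correct and follows essentially the same approach as the paper: define $\ks$ via the $b$-deformed closed-open map with one interior input, establish well-definedness in cohomology by analyzing codimension-one boundary strata (disc bubbling plus the weak MC equation to kill the bubble carrying $m_0^b$), and prove multiplicativity by a cobordism over $\mathcal{M}^{\mathrm{main}}_{1,2}$ between the configurations $\Sigma_0$ (colliding interior points) and $\Sigma_{12}$ (boundary node splitting into two $\ks$-discs), with the sign $(-1)^{\deg \ks(f_1)}$ arising exactly as you indicate. The paper also flags that, unlike \cite{FOOO_MS}, no $T^n$-equivariance is imposed on the perturbations, so one follows the simplified scheme of \cite{ACHL} using Fukaya's component-wise compatible continuous family of multisections; you allude to this under ``virtual-perturbation framework parallel to that of \cite{FOOO_MS}'', which is adequate at this level of detail.
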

We remark that  further geometric investigation is needed to show that $\ks$ is an isomorphism.
In the case of \cite{FOOO}, the images of toric divisors under $\ks$ is analyzed to prove an isomorphism property.
In \cite{ACHL}, authors had to enlarge the domain of Landau-Ginzburg mirror (valuations of variables in a Novikov field) to make $\ks$ an isomorphism.

Let us move on to the equivariant cases.
Let $X$ be a symplectic manifold with a finite symmetry group $G$. Then the quotient $[X/G]$ is a symplectic orbifold.
Suppose we have a possibly immersed Lagrangian submanifold $\OL{L}$ on $[X/G]$ such that it has embedded 
Lagrangian lifts $\WT{L}$ in $X$. By applying the previous construction on $\OL{L} \subset [X/G]$, we obtain a Fukaya $\AI$-algebra for $\OL{L}$ with a potential function  $W_{\OL{L}}:Y \to \Lambda$.
  Dual group $\WH{G}$ naturally acts on $Y$ leaving $W$ invariant and we obtain the mirror Landau-Ginzburg orbifold $(W_{\OL{L}},\WH{G})$. Here the action of $\chi \in \WH{G}$ on the mirror variable $x_i$  is determined by the change
  of branchs of $\WT{L}$ for the corresponding Lagrangian intersection $X_i$(Definition \ref{def:hatac}).
  
Now, suppose $\bar{L}$ is essential in $[X/G]$ and then the mirror of the orbifold $[X/G]$ should be the LG model $W := W_{\OL{L}}$.
Also the mirror of $X$ should be  the orbifold LG model $(W, \WH{G})$.
In this setup, the first author together with Hong and Lau constructed homological mirror symmetry functor (\cite{CHL},\cite{CHLnc}).
Namely, there exist a localized mirror functor $\mathcal{F}^{\bar{L}}:  \mathrm{Fu}([X/G]) \to MF^\AI(W)$.
Here we define $\mathrm{Fu}([X/G])$ to be  $G$-invariant part $\mathrm{Fu}(X)^G$. 
There is also an upstairs localized mirror functor 
$$\mathcal{F}^{\WT{L}}:  \mathrm{Fu}(X) \to MF_{\WH{G}}^\AI(W)$$
to the $\WH{G}$-equivariant matrix factorization category of $W$ (see \cite{CHLnc} for more details).

In this paper, we find a direct connection of orbifold Jacobian algebra to symplectic geometry.
Namely, we  define a  Kodaira-Spencer map from quantum cohomology to the orbifold Jacobian algebra,
which is an equivariant version of Theorem \ref{thm:1}. 
We will define an $\AI$-algebra $\mathcal{B}(\OL{L}) \rtimes \WH{G}$ and an additional $\WH{G}$-action on it
and define an equivariant version of Kodaira-Spencer map.
\begin{theorem}[Theorem \ref{thm:ks2}]
Suppose the Lagrangian $\OL{L} \subset [X/G]$ and its potential function $W$ is given as above.
There is a Kodaira-Spencer map which is a ring homomorphism
\begin{equation}\label{eq:ksb}
\mathfrak{ks}:QH^*(M,\Lambda) \to H^*\big(( \mathcal{B}(\OL{L}) \rtimes \WH{G})^{\WH{G}}\big)_{alg}.
\end{equation}
\end{theorem}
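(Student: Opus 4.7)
The plan is to mirror the proof of Theorem \ref{thm:1} applied to the pair $(\OL{L}, [X/G])$ while incorporating the additional $\WH{G}$-symmetry coming from the branch data of the lift $\WT{L} \subset X$. The non-equivariant construction, applied directly to $\OL{L} \subset [X/G]$, produces an $\AI$-algebra $\mathcal{B}(\OL{L})$ together with a closed-open Kodaira-Spencer ring homomorphism whose target is $H^*(\mathcal{B}(\OL{L}))_{alg}$. To upgrade this to a map into the invariant part of the crossed product, the key observation is that the mirror $\WH{G}$-action on the variables $x_i$ of Definition \ref{def:hatac}, which is induced by the change of branches of $\WT{L}$ at the corresponding intersection $X_i$, lifts canonically to a $\WH{G}$-action on $\mathcal{B}(\OL{L})$ by $\AI$-automorphisms. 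This equivariance is essentially forced by the facts that $W$ is $\WH{G}$-invariant and that the disc counts defining the $\AI$-operations are weighted by branch-monodromy data which transforms $\WH{G}$-equivariantly. Given this, the crossed product $\mathcal{B}(\OL{L}) \rtimes \WH{G}$ acquires an additional $\WH{G}$-action in the standard way, and one forms the invariant sub-$\AI$-algebra $(\mathcal{B}(\OL{L}) \rtimes \WH{G})^{\WH{G}}$.

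To define $\mathfrak{ks}$ itself, I would use the closed-open map in which an interior marked point carries a class $\alpha \in QH^*(M,\Lambda)$ while the boundary is deformed by the MC element $b = \sum x_i X_i$; the output is the virtual evaluation at a boundary marked point in the $(b,b)$-deformed Fukaya $\AI$-algebra of $\OL{L}$. The extra ingredient beyond Theorem \ref{thm:1} is that every orbi-disc in $[X/G]$ bounded by $\OL{L}$ carries a well-defined character $\chi \in \WH{G}$ recording the monodromy of $\WT{L}$ around its boundary, so summing contributions sector by sector packages the output as an element of $\mathcal{B}(\OL{L}) \rtimes \WH{G}$. Its $\WH{G}$-invariance follows from the $\WH{G}$-equivariance of the underlying disc count together with the action's compatibility with the $b$-deformation, and post-composing with the projection to $H^*$ of the invariants yields the map \eqref{eq:ksb}.

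The ring homomorphism property is obtained by the same one-parameter moduli argument as in Theorem \ref{thm:1}: the boundary of the moduli space of discs with two interior marked points decomposes into configurations realizing either the quantum product on the closed side or the $m_2^b$ product on the open side, and because each constituent moduli space is $\WH{G}$-equivariant the chain-level identity descends to the invariant sub-algebra. The main obstacle, beyond the transversality and orientation subtleties already present in Theorem \ref{thm:1}, is the equivariant bookkeeping of orbi-disc moduli: one must verify that the assignment $\chi \in \WH{G}$ to an orbi-disc is multiplicative under disc concatenation, so that the gluing of disc boundaries corresponds to the group law in $\WH{G}$ and hence to the crossed-product multiplication on the target. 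This combinatorial compatibility is the essential new input that makes $(\mathcal{B}(\OL{L}) \rtimes \WH{G})^{\WH{G}}$ the geometrically natural recipient of an equivariant closed-open map.
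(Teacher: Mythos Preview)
Your proposal has a conceptual mismatch that the paper's argument avoids. You propose to work with orbi-discs in $[X/G]$ bounded by $\OL{L}$, yet you want the interior marked point to carry a class $\alpha \in QH^*(M,\Lambda)$. These live in different spaces: a closed-open map for discs in $[X/G]$ naturally has source $QH^*([X/G])$ (or $QH^*_{orb}([X/G])$), not $QH^*(M)$. You never explain how to bridge this, and the paper's own remark after Theorem~\ref{thm:ks2} indicates that the orbifold-input version is not developed here. Relatedly, your ``monodromy'' assignment to an orbi-disc would produce an element of $G$ (the deck transformation of the boundary lift), not a character in $\WH{G}$; the $\WH{G}$-labelling in the paper arises from a different mechanism.

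The paper instead works entirely upstairs in $M$. It considers $J$-holomorphic polygons in $M$ with boundary on the union of lifts $\WT{L}=\bigsqcup_{g\in G} g\cdot L_1$, into which classes from $QH^*(M)$ can be inserted directly. The output lands in $CF(\WT{L},\WT{L})\otimes R$, which is then transported to $\mathcal{B}(\OL{L})\rtimes\WH{G}$ via the $\AI$-isomorphism $\Phi$ of Lemma~\ref{lem:isobr}. The $\WH{G}$-sector is not read off from any disc monodromy; rather, one decomposes $QH^*(M)$ into eigenspaces for the $G$-action, and a $G$-equivariant perturbation shows that a $\chi$-eigenvector $A$ is sent to the $\chi$-eigenspace of $CF(\WT{L},\WT{L})\otimes R$, i.e.\ to $\mathcal{B}\otimes\chi$ under $\Phi^{-1}$. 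Finally, the landing in the $\WH{G}$-invariants is proved by a direct branch-tracking lemma: for each contributing polygon the $\WH{G}$-action on the variables $x_i$ (recording branch jumps $g_i^{-1}g_{i+1}$ at the boundary insertions) telescopes to $\chi(g_0^{-1}g_k)$, which exactly cancels the $\WH{G}$-action on the output generator in $CF(L_{g_0},L_{g_k})$. This replaces your appeal to ``$\WH{G}$-equivariance of the disc count'' with a concrete computation.
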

We prove that under suitable assumption,  $H^*\big(( \mathcal{B}(\OL{L}) \rtimes \WH{G})^{\WH{G}}\big)_{alg}$ is isomorphic to orbifold Jacobian algebra.
\begin{theorem}[Theorem \ref{thm:main}]
We have
$$H^*\big(( \mathcal{B}(\OL{L}) \rtimes \WH{G})^{\WH{G}}\big)_{alg} \cong \Jac(W, \WH{G}).$$
under the following  Assumption \ref{as2} on a reference Lagrangian $L$ (which is $\OL{L}$ in this case)with potential function $W(x)$:
\begin{equation}\label{eq:cfmf}
\big(CF(L,L)\otimes \Lambda[x_1,\cdots,x_n,y_1,\cdots,y_n],  m_1^{b(x),b(y)} \big)
\end{equation}
defines a matrix factorization of $W(y)-W(x)$ and it is isomorphic to the Koszul matrix factorization for
the regular sequence $(y_1-x_1, \cdots, y_n-x_n)$.
\end{theorem}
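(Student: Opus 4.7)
The plan is to mimic the non-equivariant identification $H^*(\mathcal{B}(L))_{alg}\cong \Jac(W)$ of Proposition \ref{prop:bjac}, but $\WH{G}$-equivariantly, by decomposing both sides into $\chi$-twisted sectors indexed by $\chi\in \WH{G}$ and matching them module-by-module together with the product structure. As a graded $\Lambda$-module, the $\WH{G}$-invariants of the smash product split as
$$\bigl(\mathcal{B}(\OL{L})\rtimes \WH{G}\bigr)^{\WH{G}}\;\cong\;\bigoplus_{\chi\in\WH{G}}\mathcal{B}(\OL{L})^{(\chi)}\cdot \chi,$$
where $\mathcal{B}(\OL{L})^{(\chi)}$ is the subspace on which the conjugation $\WH{G}$-action acts through the character dictated by $\chi$. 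Because the Fukaya $\AI$-operations used to build $\mathcal{B}(\OL{L})$ are $G$-equivariant, the $m_1^b$-differential respects this decomposition, so it suffices to compute the cohomology of each $\chi$-sector and identify the induced product.

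The core step is to compute the cohomology of each $\chi$-sector. By Assumption \ref{as2}, the two-variable complex in \eqref{eq:cfmf} is isomorphic, as a matrix factorization of $W(y)-W(x)$, to the Koszul matrix factorization for the regular sequence $(y_1-x_1,\dots,y_n-x_n)$. Reading off the $\chi$-sector amounts to setting $y=\chi\cdot x$, so the sequence becomes $((\chi x)_1-x_1,\dots,(\chi x)_n-x_n)$: coordinates in $\mathrm{Fix}(\chi)$ give trivial entries while coordinates moved by $\chi$ give a regular sequence of linear forms in those coordinates. Kn\"orrer periodicity for matrix factorizations eliminates the moved coordinates and leaves a Koszul matrix factorization on $\mathrm{Fix}(\chi)$ with potential $W|_{\mathrm{Fix}(\chi)}$, whose cohomology is $\Jac(W|_{\mathrm{Fix}(\chi)})$ by the non-equivariant case. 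Summing over $\chi$ reproduces $\Jac(W,\WH{G})$ as a module (Definition \ref{def:ojr2}).

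It remains to match the algebra structure. The product on the left-hand side is induced by $m_2^b$ together with the group law on $\WH{G}$, which for the $\chi$- and $\chi'$-sectors lands in the $\chi\chi'$-sector as required. The non-trivial content is to check that, on sector representatives surviving the Kn\"orrer reduction, $m_2^b$ reproduces the Hessian-twisted product used in the definition of $\Jac(W,\WH{G})$ (\cite{BT},\cite{BTW},\cite{HLL},\cite{Shk}). This is where I expect the main obstacle: the module-level identification follows essentially formally from Assumption \ref{as2} and Kn\"orrer periodicity, but pinning down the product demands explicit representatives in each $\chi$-sector after reduction and a careful accounting of signs, orientation data, and the Hessian factors appearing in the two-variable MF isomorphism, which then have to be matched with the Hessian normalization built into the orbifold Jacobian algebra product.
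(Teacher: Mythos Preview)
Your sector-by-sector module identification is in the right spirit and does align with how the paper later (Section~\ref{sec:ag}) matches generators, but as you yourself flag, the proposal has a genuine gap at the product step. Matching the $m_2^b$-product against the orbifold Jacobian product is exactly the hard part: the structure constants $\sigma_{g,h}$ of Definition~\ref{def:sig} involve the terms $H_W$, $H_{W,g}$ and the map $\Upsilon$, and nothing in your outline explains why the Floer $m_2^b$, after Kn\"orrer reduction and choice of representatives, should reproduce those specific Hessian-type factors. Saying that ``careful accounting of signs, orientation data, and Hessian factors'' is needed does not constitute a proof strategy; it is a restatement of the problem.

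The paper takes a completely different route that sidesteps this computation. Rather than comparing products directly, it invokes Shklyarov's theorem (Theorem~\ref{thm:orbjachh}) together with Polishchuk--Vaintrob's kernel description to identify $\Jac(W,\WH{G})$ with $\Hom_{MF_{\WH{G}\times\WH{G}}(W(y)-W(x))}(\Delta_W^{\WH{G}\times\WH{G}},\Delta_W^{\WH{G}\times\WH{G}})$. It then builds an $A_\infty$-homomorphism
\[
\mathcal{F}:\mathcal{B}\rtimes\WH{G}\longrightarrow hom_{MF^{A_\infty}(W(y)-W(x))}(\Delta_W^{\WH{G}\times\WH{G}},\Delta_W^{\WH{G}\times\WH{G}})
\]
using the localized mirror functor relative to $(\WT{O},\WT{b}(y))$ (Proposition~\ref{prop:m1}); because this is an $A_\infty$-map, it is automatically multiplicative on cohomology, so no product comparison is needed. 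The remaining work (Proposition~\ref{prop:m2}) is to show that $\mathcal{F}_1$ is cohomologically injective and that its image is precisely the $(\WH{G}\times\WH{G})$-equivariant morphisms; here Assumption~\ref{as2} is used to identify the Floer kernel with $\Delta_W$ (cf.\ Remark~\ref{rmk:equivkernel}) and to perform the dimension count via Koszul self-duality. In short: the paper trades your direct product computation for an abstract algebra map plus an equivariance/surjectivity argument, and that trade is what makes the proof go through.
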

Let us first explain the Assumption \ref{as2}.
Conjecturally, the  Assumption \ref{as2} for an essential Lagrangian $L \subset M$  may be formulated as a correspondence between geometric diagonal  $\Delta \subset M^{-} \times M$
and an algebraic diagonal of $W(y) -W(x)$ which is the kernel of identity functor of matrix factorization category $MF(W)$. Therefore, we expect this assumption to hold in such cases. We hope to discuss this in more detail elsewhere.
In this paper, we show that monotone Lagrangian tori in a symplectic manifold as well as Seidel Lagrangian in $\mathbb{P}^1_{a,b,c}$ satisfies this assumption  \ref{as2} more directly. 

Let us also briefly explain the idea of the proof of the above theorem  \ref{thm:main}.
The semi-direct product $\AI$-algebra $( \mathcal{B}(\OL{L}) \rtimes \WH{G})$ concerns Maurer-Cartan data of the Lagrangian lifts $\WT{L}$,
while keeping the same variables $(x_1,\cdots,x_n)$. We apply localized mirror functor relative to the reference  $(\OL{L},b(y))$ to
obtain the matrix factorization \eqref{eq:cfmf} of $W(y) - W(x)$. The rest of the construction is to consider the additional $\WH{G}$-action
to compare it with the equivariant Kernel $\Delta_W^{\WH{G}\times \WH{G}}$ for matrix factorization category.

We give an explicit example. It turns out that $\mathfrak{ks}$ for a symplectic torus $T^2$ is already non-trivial. Recall that Polishchuk-Zaslow \cite{PZ} explained the HMS of $T^2$ from the point of view of Strominger-Yau-Zaslow formalism\cite{SYZ} that mirror pairs are obtained as dual torus fibrations. In \cite{CHL} (following the work of Seidel \cite{S}),  the $\Z/3$-quotient of $T^2$ and a Seidel Lagrangian $\OL{L} \in [T^2/\Z/3]$
was considered. In \cite{CHL}, $\OL{L}$ is shown to be weakly unobstructed with a potential function
\[W=-\phi(q) i(x_1^3+x_2^3+x_3^3)+\psi(q) i x_1x_2x_3.\]
Here $\phi, \psi$ are power series in $q$ given in \eqref{phipsi}. Localized mirror functor provides an explicit HMS equivalence in this case.

We prove the following theorem, by computing the Kodaira-Spencer map and comparing
with the algebraic generators of orbifold Jacobian algebra by Shklyarov \cite{Shk}.
\begin{theorem}\label{KS} 
A Kodaira-Spencer ring isomorphism $\ks: QH^*(T^2,\Lambda) \to \Jac(W,\Z/3)$ is given by 
\begin{eqnarray*}
\mathrm{pt}_{T^2} & \mapsto & \frac{1}{24}q\frac{\partial W}{\partial q}, \\
l_\chi:=-i{\gamma} \cdot \frac{[C_h]-\chicheck [C_v]}{\chicheck-1} & \mapsto & \xi_\chi,\\
l_{\chi^2}:=-i{\gamma} \cdot \frac{[C_v]-\chicheck [C_h]}{\chicheck-1} & \mapsto & \xi_{\chi^2}.\\
\end{eqnarray*}
Here $\gamma$ is a modular form 
\[ \gamma=\sum_{k \in \Z} (-1)^k i q^{(6k+1)^2}.\]
\end{theorem}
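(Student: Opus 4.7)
The plan is to invoke the general equivariant Kodaira-Spencer construction (Theorem \ref{thm:ks2}) for the Seidel Lagrangian $\OL{L} \subset [T^2/\Z/3]$ with the stated potential $W$, and to compute the closed-open image of each of the four basis elements of $QH^*(T^2, \Lambda)$ directly. The isomorphism $H^*((\mathcal{B}(\OL{L}) \rtimes \wG)^{\wG})_{alg} \cong \Jac(W, \Z/3)$ from Theorem \ref{thm:main} identifies the target with the orbifold Jacobian algebra, once Assumption \ref{as2} is verified for $\OL{L}$ as part of the paper's general treatment of Seidel Lagrangians.

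For the top class $\mathrm{pt}_{T^2}$, I would use the standard divisor/point-constraint principle for the closed-open map: the count of $J$-holomorphic discs bounded by $\OL{L}$ with one interior marked point mapped to a generic point of $T^2$ effectively weights each disc $u$ by the ratio of its symplectic area $E(u)$ to the total volume. Since $W = \sum_u T^{E(u)} m_u(x)$ for monomials $m_u(x)$ in the Lagrangian intersection variables, the derivation $q \partial_q$ precisely extracts the factor $E(u)$, so the resulting sum coincides with $q \partial_q W$ up to the area-normalizing constant. A leading-order check, using the smallest triangle in the disc enumeration for $\OL{L}$ together with the $1/3$ orbifold factor, pins the constant to $1/24$, yielding $\ks(\mathrm{pt}_{T^2}) = \tfrac{1}{24}\, q \partial_q W$.

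For the divisor classes I would compute $\ks([C_h])$ and $\ks([C_v])$ separately by enumerating Maslov-index-two discs on $(T^2, \OL{L})$ whose interiors meet a given horizontal or vertical cycle. These discs are indexed by integer lattice points in a fundamental strip with signs coming from the chosen spin structure and the cube-root weights attached to the branch jumps of $\OL{L}$, and the resulting generating series of symplectic areas evaluates to the theta-like modular form $\gamma = \sum_{k \in \Z} (-1)^k i\, q^{(6k+1)^2}$; this identity parallels the theta-function calculations in the Polishchuk-Zaslow treatment of mirror symmetry on $T^2$. Equivariance of the closed-open map built into Theorem \ref{thm:ks2} guarantees that the $\chicheck$-eigen-combination $([C_h] - \chicheck [C_v])/(\chicheck - 1)$ lands in the $\chi$-twisted sector of $\Jac(W, \Z/3)$, which is one-dimensional, and similarly for the $\chi^2$-combination. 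The explicit factor $-i\gamma$ appearing in the definitions of $l_\chi$ and $l_{\chi^2}$ is then exactly the rescaling that matches these twisted components with Shklyarov's generators $\xi_\chi$ and $\xi_{\chi^2}$.

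The main obstacle is the explicit identification of the disc-counting generating series with $\gamma$: this requires a careful classification of Maslov-index-two discs bounded by $\OL{L}$ meeting a fixed 1-cycle, together with consistent bookkeeping of orientation signs and of the cube-root-of-unity factors contributed by the three self-intersection points of $\OL{L}$. Once this computation is carried out, Shklyarov's explicit presentation of $\xi_\chi$ and $\xi_{\chi^2}$ closes the identification. That $\ks$ is a ring \emph{isomorphism} (rather than merely a ring homomorphism, which is guaranteed by Theorem \ref{thm:ks2}) then follows from linear independence of the four computed images, which sit cleanly in the untwisted and twisted sectors, together with the dimension match $\dim QH^*(T^2, \Lambda) = \dim \Jac(W, \Z/3) = 4$.
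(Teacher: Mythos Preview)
Your approach for $\mathrm{pt}_{T^2}$ is in line with the paper: the $q\partial_q W$ identification via area-weighted disc counts, with the $1/24$ arising because the three lifted circles cut $T^2$ into $24$ equal-area minimal triangles (this is cited from \cite{ACHL}).

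However, your treatment of the degree-one classes has a genuine gap. You propose to obtain $\gamma$ as a disc-counting generating series for $\ks([C_h])$ and $\ks([C_v])$, but this is not where $\gamma$ comes from. In the paper, $\ks([C_h])$ and $\ks([C_v])$ are computed purely \emph{classically}: the cycle $C_h$ meets $\WT{L}$ in four points, and the image is the sum of the Poincar\'e duals of those points (as elements of $CF(\WT{L},\WT{L})$), corrected by short Morse flow-line terms coming from $m_2(b,I)$ and $m_2(I,b)$ for path segments $I$. The resulting expressions (see the Proposition for $\ks([C_h])$ and $\ks([C_v])$ in Section~\ref{sec:t2}) carry \emph{no} $q$-dependence at all. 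After applying $\Phi_1^{-1}$ and projecting to the $\chi$-eigenspace, one finds a cocycle whose leading term is $p\otimes\chi$ with coefficient $1$.

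The modular form $\gamma$ enters through a completely different door: it is the quantum correction factor in the Koszul-basis change $p^{new}=\gamma p$ established in Proposition~\ref{prop:ge} (and its specialization $\gamma^e=\gamma$ for $\PP^1_{3,3,3}$). The identification of Shklyarov's generator $\xi_\chi$ with a Floer cocycle is then made via Proposition~\ref{cohgenerator}, which says $\xi_\chi$ corresponds to the class whose highest-wedge term is $-p^{new}=-\gamma p$. Comparing this with the $\ks$-image of the $\chi$-eigencombination of $[C_h],[C_v]$ forces the rescaling factor $-\gamma$ (and the extra $i$ from the dg-sign convention), which is how $l_\chi$ is \emph{defined}. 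Your proposed theta-function enumeration does not exist in this argument, and without Proposition~\ref{cohgenerator} and the Koszul normalization you have no mechanism to pin down the constant relating $\ks(l_\chi)$ to $\xi_\chi$.

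Finally, the paper closes the proof not by a dimension count but by verifying the single nontrivial product $\ks(l_\chi\cup l_{\chi^2})=\xi_\chi\bullet\xi_{\chi^2}$ explicitly; this reduces to the modular-form identity \eqref{eq:modular} relating $\gamma^2 q(-\psi\partial_q\phi+\phi\partial_q\psi)$ to $-8\phi(27\phi^3-\psi^3)$, which you do not mention.
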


The organization of this paper is as follows.
We first review the notion of a kernel for a functor between categories of matrix factorizations in Section \ref{sec:ker}.
The kernel $\Delta_W$ for the identity functor $\id:MF(W) \to MF(W)$ is a Koszul matrix factorization and
$\Hom(\Delta_W,\Delta_W)$ describes the Hochschild cohomology of $MF(W)$, or the Jacobian ring of $W$.
We review the algebraic definition of orbifold Jacobian algebras in Section \ref{sec:oja} following Shklyarov \cite{Shk}.
In section \ref{sec:mc}, we explain how to use Maurer-Cartan formalism of $\AI$-algebra  $\mathcal{A}$ by Fukaya-Oh-Ohta-Ono 
to define a new $\AI$-algebra $\mathcal{B}$. We describe Assumption \ref{as1} which implies that the cohomology algebra of $\mathcal{B}$ is isomorphic to Jacobian algebra of the superpotential $W$ of $\mathcal{A}$.
And we define Floer theoretic kernel $\Delta_{Fl}$ using assumption \ref{as2}.
In section \ref{sec:le}, we show that monotone Lagrangian torus as well as Seidel Lagrangian in $\mathbb{P}^1_{a,b,c}$ satisfies this
assumption \ref{as2} (which implies \ref{as1}).
In section \ref{sec:eqb}, we give an equivariant analogue of the construction in  section \ref{sec:mc}. 
From an $\AI$-algebra $\WT{\mathcal{A}}$ with finite group $G$-action, we define
a new $\AI$-algebra $\mathcal{B} \rtimes \WH{G}$, by studying equivariant version of Maurer-Cartan formalism.
In section \ref{sec:eqbj}, we show that $H^*\big((\mathcal{B} \rtimes \WH{G})^{\WH{G}}\big)_{alg}$
is quasi-isomorphic to orbifold Jacobian algebra under the assumption \ref{as2}. For the proof, we use
the notion of equivariant Kernel $\Delta_W^{\wG\times \wG}$ and show that  the $\AI$-algebra $(\mathcal{B} \rtimes \WH{G})^{\WH{G}}$ is shown to be $\AI$-quasi-isomorphic to $hom_{MF^\AI(W(y)-W(x))}(\Delta_W^{\wG\times \wG}, \Delta_W^{\wG\times \wG})$ whose
cohomology algebra (in dg sign convention) is isomorphic to the orbifold Jacobian algebra.
In section \ref{sec:KS}, we construct Kodaira-Spencer map from quantum cohomology to  cohomology algebra of $\mathcal{B}$
or $(\mathcal{B} \rtimes \WH{G})^{\WH{G}}$. In section \ref{sec:ag}, we review the construction of Shklyarov so that
we can match our construction to that of \cite{Shk}. In section \ref{sec:t2}, we illustrate our construction to compute the Kodaira-Spencer map for $T^2$ using geometric construction. In Appendix \ref{algjac}, we write the algebraic computation of
the orbifold Jacobian algebra for the mirror of $T^2$.

\begin{notation}
We introduce a few notations. Here $k$ is a base field.
For mirror symmetry $k$ is  Novikov field $\Lambda$ over $\C$ which is algebraically closed with a valuation $\nu:\Lambda \to \R$.
\begin{flalign*}
&R:=k[x_1,\cdots,x_n],\\
&R^e:= k[x_1,\cdots,x_n,y_1,\cdots,y_n],\\
&\Lambda_0:=\Big\{ \sum_{j=1}^\infty a_j T^{\lambda_j} \mid  a_j \in \mathbb{C},\; \lambda_j\in \R_{\geq 0}, \; \lim_{j\to \infty} \lambda_j=\infty \Big\}, \\
&\Lambda:= \Lambda_0[T^{-1}], \;\; \Lambda_+:=\Big\{ \sum_{j=1}^\infty a_j T^{\lambda_j} \in \Lambda_0 \mid \lambda_j>0 \Big\}, \\
&\nu(\sum_{j=1}^\infty a_j T^{\lambda_j}) := \min_j (\lambda_j).
\end{flalign*}

Here $R$ and $R^e$ are suitably completed when $k = \Lambda$. 
For example $R$ (for $k=\Lambda$) is the convergent power series ring  which is the set of elements 
$$\sum_{i_1,\cdots, i_n \geq 0} c_{i_1,\cdots,i_n} x_1^{i_1} \cdots x_n^{i_n}$$ such that
the elements $c_{i_1,\cdots,i_n} \in \Lambda$ satisfies $ \lim_{i_1+\cdots + i_n \to \infty} \nu(c_{i_1,\cdots,i_n}) = \infty$.
\end{notation}

\subsection*{Acknowledgements} We would like to thank Lino Amorim, Yong-Geun Oh, David Favero and Yoosik Kim for very helpful discussions.
C.-H. Cho was supported by the NRF grant funded by the Korea government(MSIT) (No. 2017R1A22B4009488).
S. Lee was supported by an Individual Grant(MG063902) at Korea Institute for Advanced Study and by the Fields Institute during the Thematic Program on Homological Algebra of Mirror Symmetry.

\section{Kernels for matrix factorizations}\label{sec:ker}
Let us recall our basic set-up for matrix factorization category.
Let $W$ be an element in $R=k[x_1,\cdots,x_n]$ for an algebraic closed field $k$ of characteristic 0. We assume that 
$W $ has only isolated singularity at the origin. 
\begin{definition}
A {\em matrix factorization} of $W$ is a $\Z/2$-graded projective $R$-module $P=P_0\oplus P_1$ together with a morphism $d=(d_0,d_1)$ of degree 1 such that
\[ d^2=W\cdot \id.\] 
A {\em morphism} $\phi: (P,d) \to (Q,d')$ of {\em degree} $j\in \Z/2$ is a map of $\Z/2$-graded $R$-modules of degree $j$.  Define
\[ D\phi:=d'\circ \phi -(-1)^{|\phi|}\phi \circ d\]
and define the composition of morphisms in the usual sense. This defines a dg-category of matrix factorizations $(MF(W),D,\circ)$.
\end{definition}

This dg category can be turned into an $\AI$-category $MF^\AI(W)$ as follows:
\begin{definition}\label{def:mfa}
The $\AI$-category  $MF^\AI(W)$ has the same set of objects as  $(MF(W),D,\circ)$.
\[ hom_{MF^\AI(W)}(E,F):=hom_{MF(W)}(F,E),\]
\begin{equation}\label{eq:dgainfty} m_1(\Phi):= D\Phi,\; m_2(\Phi,\Psi):= (-1)^{|\Phi|}\Phi\circ \Psi,\end{equation}
and $ m_{k} =0$ for any other $k$.
\end{definition}
We still denote a morphism $\Phi \in hom_{MF^\AI(W)}(E,F)$ as $\Phi:F \to E$.
\begin{remark}\label{rem:dgmodification}
Conversely, given an $\AI$-algebra $\mathcal{C}$ with $m_0=0$, its $m_1$-cohomology
$H^*(\mathcal{C})$ has an induced associative algebra structure $\circ$ given by
\[ \Phi \circ \Psi:= (-1)^{|\Phi|}m_2(\Phi,\Psi).\]
To emphasize that we need this additional sign, we will denote such cohomology algebra as $H^*(\mathcal{C})_{alg}$.
\end{remark}

There is a class of matrix factorizations, called {\em Koszul matrix factorizations}, which play a  central role in the theory.
Let $I$ be an ideal of $R$ generated by a regular sequence $(b_1,\cdots, b_n)$, and suppose that $W \in I$.
Let $(a_1,\cdots,a_n)$ be elements of $R$ such that 
$$W = \sum_{i=1}^n a_i \cdot b_i.$$

\begin{definition} The {\em $n$-th graded Clifford algebra} is
\[ \mathrm{Cl}_n:=k[\theta_1,\cdots,\theta_n,\partial_{\theta_1},\cdots,\partial_{\theta_n}],\;\; |\theta_i|=-1,\;\; |\partial_{\theta_i}|=1 \;\;\text{for all $i$}\]
together with relations
\begin{equation}\label{eq:cliffrel}
 \theta_i \theta_j=-\theta_j \theta_i,\;\; \partial_{\theta_i}\partial_{\theta_j}=-\partial_{\theta_j}\partial_{\theta_i},\;\; \partial_{\theta_i}\theta_j=-\theta_j\partial_{\theta_i}+\delta_{ij}.
 \end{equation}
\end{definition}

From the regular sequence $\{\vec{b}\}$, the associated Koszul complex is defined as follows.
\begin{equation}\label{eqK}
K^\bullet = \big( k[\theta_1,\cdots, \theta_n] \otimes R,s_0 \big),\;\; \textrm{with} \;\; s_0 = \sum_j b_j \partial_{\theta_j}.
\end{equation}

The Koszul matrix factorization for $\{\vec{a},\vec{b}\}$ is defined as $(K^\bullet ,\delta \big)$, 
where $$\delta =  s_0+ s_1, \;\; \textrm{with} \;\; s_1(\alpha)= \sum_{j=1}^n a_j \theta_j \cdot \alpha .$$
 It is a result of Eisenbud that this Koszul matrix factorization corresponds to the stabilization of
$R/W$-module $R/I$ (see Proposition 2.3.1 of \cite{PV} for example). Note that corresponding module $R/I$ does not depend on the choice of a regular sequence $\vec{b}$ generating $I$ nor $\vec{a}$.

We recall a few results using derived Morita theory of matrix factorizations.
\begin{definition}[\cite{Toen}]\label{def:hhcohomology}
Let $\CA$ be a dg category. The {\em Hochschild cochain complex} of $\CA$ is defined as
\[ C^*(\CA,\CA):= hom_{R\underline{Hom}_c(\CA,\CA)}(\mathrm{Id}_\CA,\mathrm{Id}_\CA)\]
where $R\underline{Hom}_c(\CA,\CA)$ is the dg category of continuous functors from $\CA$ to itself.
\end{definition}

By modifying results in \cite{Toen} for differential $\Z/2$-graded categories and applying them to matrix factorizations, Dyckerhoff showed the following.
\begin{theorem}[\cite{Dyc}]
There is a quasi-equivalence between two dg categories
\begin{equation}\label{eq:ker}
 R\underline{Hom}_c(MF(R,W),MF(R',W')) \simeq MF(R\otimes_k R', -W\otimes 1+ 1\otimes W').
 \end{equation}
In the case that  $(R',W')=(R,W)$, identity functor $\mathrm{Id}_{MF(R,W)}$ corresponds via the above quasi-equivalence to a matrix factorization, that is given by a resolution of 
\[R\cong R^e/(y_1-x_1,\cdots,y_n-x_n)\] 
as an $R^e/(W(y_1,\cdots,y_n)-W(x_1,\cdots,x_n))$-module.
\end{theorem}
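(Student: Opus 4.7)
The plan is to follow Dyckerhoff's strategy, which adapts To\"en's derived Morita theorem for dg categories to the $\Z/2$-graded setting of matrix factorizations. The basic principle is that once $MF(R,W)$ is shown to have a compact generator, the dg category of continuous functors between two such categories is controlled by bimodules, and bimodules over matrix factorization categories can be identified with matrix factorizations over the tensor-product ring for an ``external sum'' potential.

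First I would establish the existence of a compact generator for $MF(R,W)$ when $W$ has only isolated singularity at the origin. The natural candidate is the stabilization $k^{\mathrm{stab}}$ of the residue field $k = R/(x_1,\ldots,x_n)$, which is concretely presented as the Koszul matrix factorization for the regular sequence $(x_1,\ldots,x_n)$ with coefficients $a_i = \partial_i W$ (or any $a_i$ with $\sum a_i x_i = W$). Isolated singularity implies that this object generates the homotopy category, and computing $\mathrm{End}(k^{\mathrm{stab}})$ in $MF(R,W)$ produces a Clifford-type dg algebra quasi-isomorphic to the Jacobian algebra. This identification of the generator and its endomorphism algebra is the main technical hurdle; once it is in place, the rest is formal.

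Next I would invoke To\"en's theorem in its $\Z/2$-graded form: for dg categories with compact generators, the dg category $R\underline{\Hom}_c(\CA,\CB)$ of continuous functors is quasi-equivalent to the derived category of $\CA$-$\CB$-bimodules. Applying this to $\CA = MF(R,W)$ and $\CB = MF(R',W')$, and using the compact generators on both sides, one reduces to identifying bimodules. The key observation is that the external tensor product $(P,d_P)\boxtimes (Q,d_Q)$ with differential $d_P\otimes 1 + 1\otimes d_Q$ (with appropriate Koszul sign) is a matrix factorization of $W\otimes 1 + 1\otimes W'$, and that $MF(R,W)^{op}\simeq MF(R,-W)$ via $\Z/2$-graded duality. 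Combining these identifications yields the quasi-equivalence \eqref{eq:ker}.

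For the last assertion about the identity functor, I would trace through the correspondence of the previous step. Under the derived Morita equivalence, $\mathrm{Id}_{MF(R,W)}$ corresponds to the diagonal bimodule, which on the matrix factorization side becomes the object that represents the functor $(P,d)\mapsto (P,d)$; this is the image of $R$ regarded as an $R^e/(W(y)-W(x))$-module via the quotient $R^e\to R$, $y_i\mapsto x_i$. Since $(y_1-x_1,\ldots,y_n-x_n)$ is a regular sequence in $R^e$ cutting out the diagonal, and since $W(y)-W(x)$ lies in the ideal it generates (one may write $W(y)-W(x) = \sum_i (y_i-x_i)\,g_i(x,y)$ for some $g_i\in R^e$ by interpolation), Eisenbud's theorem recalled after \eqref{eqK} implies that the stabilization of $R$ as an $R^e/(W(y)-W(x))$-module is exactly the Koszul matrix factorization for $\{\vec{g}, \vec{y}-\vec{x}\}$. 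This matrix factorization is the kernel $\Delta_W$ representing the identity functor.
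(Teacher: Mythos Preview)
Your sketch is correct and faithfully outlines Dyckerhoff's original argument: compact generation of $MF(R,W)$ by $k^{\mathrm{stab}}$ under the isolated-singularity hypothesis, the $\Z/2$-graded adaptation of To\"en's derived Morita theorem to identify continuous functors with bimodules, the duality $MF(R,W)^{op}\simeq MF(R,-W)$, and the identification of the diagonal bimodule with the stabilization of $R\cong R^e/(y_i-x_i)$ via Eisenbud's theorem. Note, however, that the paper does not supply a proof of this statement at all: it is quoted as a theorem of \cite{Dyc}, with only the one-sentence attribution ``By modifying results in \cite{Toen} for differential $\Z/2$-graded categories and applying them to matrix factorizations, Dyckerhoff showed the following.'' So there is no proof in the paper to compare against; your proposal simply reconstructs the cited source.
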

\begin{definition}
For $\CF \in  R\underline{Hom}_c(MF(R,W),MF(R',W'))$, 
 the corresponding matrix factorization under the quasi-equivalence \eqref{eq:ker} is called a {\em kernel} for $\CF$.
\end{definition}
We remark that  a kernel for identity functor is not unique.
We take the following Koszul matrix factorization and denote it by $\Delta_W$.
For $j=1,\cdots,n$, define polynomials $\Delta_jW \in R^e$ (also written as $\nabla^{x\to (x,y)}_j(W)$ later)  by
\[ \Delta_j W:= \frac{W(x_1,\cdots,x_{j-1},y_j,\cdots,y_n)-W(x_1,\cdots,x_j,y_{j+1},\cdots,y_n)}{y_j-x_j}.\]
Then we have
\begin{equation}\label{eq:ksmf}
W(y_1,\cdots,y_n) - W(x_1,\cdots,x_n) = \sum_{j=1}^n \Delta_jW \cdot (y_j- x_j),
\end{equation}
From now on, we write $W(x)=W(x_1,\cdots,x_n),\;W(y) = W(y_1,\cdots,y_n)$ for simplicity.
\begin{corollary} We have
\[ C^*(MF(R,W),MF(R,W))\simeq hom_{MF(R^e,W(y) - W(x))}(\Delta_W,\Delta_W).\]
\end{corollary}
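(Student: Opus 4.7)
The plan is to obtain the corollary by combining Dyckerhoff's quasi-equivalence \eqref{eq:ker} with the identification of $\Delta_W$ as a kernel for the identity functor. Since Definition \ref{def:hhcohomology} tells us that $C^*(MF(R,W),MF(R,W))$ is the endomorphism complex of $\mathrm{Id}$ inside $R\underline{Hom}_c(MF(R,W),MF(R,W))$, and this latter dg category is quasi-equivalent to $MF(R^e,W(y)-W(x))$, it suffices to transport $\mathrm{Id}$ across the equivalence and compute its endomorphisms there.

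First, specialize Dyckerhoff's theorem to the case $(R',W')=(R,W)$, which produces
\[
R\underline{Hom}_c(MF(R,W),MF(R,W)) \simeq MF(R^e,W(y)-W(x)).
\]
Next, I would identify the object in $MF(R^e,W(y)-W(x))$ corresponding to $\mathrm{Id}$. By the theorem, this kernel is any matrix factorization that resolves the diagonal module $R \cong R^e/(y_1-x_1,\cdots,y_n-x_n)$ viewed over $R^e/(W(y)-W(x))$. I would verify that $\Delta_W$ does the job: the sequence $(y_1-x_1,\cdots,y_n-x_n)$ is a regular sequence in $R^e$ whose vanishing locus is precisely the diagonal, and the decomposition \eqref{eq:ksmf} expresses $W(y)-W(x)$ as a combination of these generators with coefficients $\Delta_j W$. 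Hence the Koszul matrix factorization $\Delta_W$ built from $(\vec{a},\vec{b})=(\Delta_\bullet W, y_\bullet-x_\bullet)$ fits the general framework of \eqref{eqK}. By the Eisenbud result cited after \eqref{eqK} (Proposition 2.3.1 of \cite{PV}), this Koszul matrix factorization is exactly the stabilization of $R^e/(y-x) \cong R$ as a module over $R^e/(W(y)-W(x))$, and therefore represents $\mathrm{Id}$ up to quasi-isomorphism in $MF(R^e,W(y)-W(x))$.

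Finally, the quasi-equivalence of dg categories is fully faithful on morphism complexes, so
\[
hom_{R\underline{Hom}_c(MF(R,W),MF(R,W))}(\mathrm{Id},\mathrm{Id}) \simeq hom_{MF(R^e,W(y)-W(x))}(\Delta_W,\Delta_W),
\]
which combined with Definition \ref{def:hhcohomology} gives the asserted quasi-isomorphism. The only non-formal point in this argument is the verification that $\Delta_W$ is indeed a resolution of the diagonal as an $R^e/(W(y)-W(x))$-module, and this is precisely the content of the Eisenbud stabilization statement invoked above together with the observation that $(y_j-x_j)$ is a regular sequence; I expect this to be the main (but already well-documented) step.
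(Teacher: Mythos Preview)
Your proposal is correct and is precisely the argument the paper has in mind: the corollary is stated without proof immediately after Dyckerhoff's theorem and the definition of $\Delta_W$, because it follows directly by combining Definition \ref{def:hhcohomology}, the quasi-equivalence \eqref{eq:ker}, and the identification of $\Delta_W$ as a kernel for the identity via Eisenbud's stabilization. You have simply spelled out what the paper leaves implicit.
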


Now, let us recall the notion of equivariant matrix factorizations. Let $H$ be a finite group, and suppose $H$ acts on $R$ and that $W$ is invariant under this action.

\begin{definition}
An {\em $H$-equivariant matrix factorization} of $W$ is a matrix factorization $(P,d)$ where $P$ is equipped with an $H$-action, and $d$ is $H$-equivariant. An {\em $H$-equivariant morphism} $\phi$ between two $H$-equivariant matrix factorizations $(P,d)$ and $(Q,d')$ is an $H$-equivariant morphism of $\Z/2$-graded $R$-modules $P$ and $Q$.
\end{definition}
It is well-known that $H$-equivariant matrix factorizations with $H$-equivariant morphisms also form a differential $\Z/2$-graded  category $MF_H(W)$.
\begin{example}
Let $R = \C[z]$ with the polynomial $W=z^n$.
Define a $\Z/n$-action by setting $[1] \cdot z = \xi z$ for an $n$th root of unity $\xi$ and $[1] \in \Z/n$, which
preserves $W$. For any $k$ (where  $0 \leq k \leq n$), we can define a matrix factorization given by $P_0 = R\langle e\rangle$ and $P_1 = R\langle o\rangle$
with   $d_0(e) = z^k o, d_1(o) = z^{n-k} e$.
For any $ 0 \leq l < n$, we can set 
\[[1]\cdot e = \xi^{k+l} e, \; [1]\cdot o= \xi^l o\]
to give a $\Z/n$-equivariant structure to this matrix factorization $(P_\bullet, d_\bullet)$.
\end{example}
Let us explain the equivariant structure of $\Delta_W$ following Section 2.5 of \cite{PV}.
Suppose a finite group $H$ acts on $R=k[x_1,\cdots,x_n]$, hence $H$ also acts diagonally on $R^e$. $\Delta_W$ can be modified to produce a $H$-equivariant matrix factorization $\Delta_W^{H}$ as follows. Define $H$-action on $\theta_1,\cdots,\theta_n$ to be the same as that on $x_1,\cdots,x_n$ which induces  $H$-action on $\{\partial_{\theta_i}\}$. This defines an action on the module part of $\Delta_W$. 
For $\delta=s_0+s_1$, note that the expression $s_0 = \sum_i (y_i - x_i)\partial_{\theta_i}$ is
$H$-invariant. Taking $H$-averaging of $s_1= \sum_j (\Delta_j W)\theta_j$, we obtain a new $s_1$,
which defines an $H$-equivariant modification $\Delta_W^H$. Observe that if we forget the $H$-action, then it is isomorphic to $\Delta_W$.

Now $\Delta_W^H$ can be used to produce an $H\times H$-equivariant matrix factorization
(analogous to diagonal $\{(x,y) \mid x=y\}$ versus orbifold diagonal $\bigcup_{h \in H} \{ (x,y) \mid hx =  y\}$).

\begin{theorem}\cite{PV}\label{PVtheorem}
Let $\Delta_W$ be a kernel for $\mathrm{Id}_{MF(W)}$ and $\Delta_W^{H}=(R^e[\theta_1,\cdots,\theta_n],d(x,y))$ be its $H$-equivariant modification. Then the following is a kernel for $\mathrm{Id}_{MF_\AI^{H}(W)}$;
\[ \Delta_W^{H\times H}:=\bigoplus_{h \in H}\big(R^e[\theta_1,\cdots,\theta_n],d(h x,y)\big)\]
which is an $H\times H$-equivariant matrix factorization of $W(y) - W(x)$.
 \end{theorem}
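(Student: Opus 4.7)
The plan is to split the statement into two independent assertions: (i) $\Delta_W^{H\times H}$ is genuinely an $H\times H$-equivariant matrix factorization of $W(y)-W(x)$, and (ii) it represents the identity functor under an $H$-equivariant analogue of Dyckerhoff's quasi-equivalence \eqref{eq:ker}.

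For (i), observe that for each $h \in H$ the differential $d(hx,y)$ is obtained from the differential of $\Delta_W^H$ by the substitution $x \mapsto hx$, and since $W$ is $H$-invariant we have $W(hx) = W(x)$, so $d(hx,y)^2 = \bigl(W(y)-W(hx)\bigr)\cdot \id = \bigl(W(y)-W(x)\bigr)\cdot \id$. The direct sum is then a matrix factorization of $W(y)-W(x)$. I would describe the $H\times H$-action as follows: the first factor permutes summands together with the substitution on the $x$-variables, sending the summand indexed by $h$ to the one indexed by $h_1 h$ via $x \mapsto h_1 x$; the second factor acts diagonally on all summands through the existing $H$-equivariant structure of $\Delta_W^H$, i.e.\ through the action on the $y_i$ and on the $\theta_i$. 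Checking that these two actions commute with the differential reduces, for the first factor, to a tautological change-of-variables identification among summands, and for the second factor to the $H$-equivariance of $\Delta_W^H$ established prior to the theorem.

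For (ii), I would first set up an $H$-equivariant version of the Dyckerhoff quasi-equivalence \eqref{eq:ker},
\[
R\underline{Hom}_c\bigl(MF_H(R,W),MF_H(R,W)\bigr)\;\simeq\; MF_{H\times H}\bigl(R^e,W(y)-W(x)\bigr),
\]
where the two copies of $H$ arise from the $H$-actions on the source and the target. Under this equivalence, the kernel for $\mathrm{Id}_{MF_H^{\AI}(W)}$ must be a stabilization of the $R^e/(W(y)-W(x))$-module underlying the ``orbifold diagonal''
\[
\bigoplus_{h \in H} R^e\bigl/(y_1 - hx_1,\dots,y_n - hx_n),
\]
which replaces the naive diagonal $\{y=x\}$ appearing in the non-equivariant Dyckerhoff theorem. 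Next I would observe that each summand $(R^e[\theta_1,\dots,\theta_n], d(hx,y))$ is precisely the Koszul matrix factorization associated with the regular sequence $(y_1-hx_1,\dots,y_n-hx_n)$ together with a compatible choice of $a_i$'s (supplied by applying the difference construction \eqref{eq:ksmf} in the shifted variables $hx$ and $y$). By the Eisenbud-type result quoted after \eqref{eqK}, this summand stabilizes $R^e/(y_j - hx_j)_{j=1}^n$. Direct-summing over $h \in H$ thus stabilizes the entire orbifold diagonal.

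To conclude, one checks that the $H\times H$-action constructed in part (i) matches the natural one on this orbifold-diagonal module. A concrete verification is to compute the convolution $\Delta_W^{H\times H}\star E$ for an arbitrary $H$-equivariant matrix factorization $E$: the summand indexed by $h$ contributes $h^*E$, and the combined equivariant structure identifies the sum with $E$ together with its original $H$-action. The main obstacle will be careful bookkeeping of signs and of the two commuting $H$-actions on the Clifford variables $\theta_i, \partial_{\theta_i}$: one must confirm that the permutation action on summands and the diagonal action on $(y_i, \theta_i)$ assemble into an honest $H\times H$-action (rather than a cocycle twist), and that on morphism complexes in the equivariant category they reproduce the correct orbifold Hom. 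Once this bookkeeping is completed, the kernel property reduces, summand by summand, to the non-equivariant case already established by Dyckerhoff.
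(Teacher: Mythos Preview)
Your overall strategy is sound and matches the paper's account of the Polishchuk--Vaintrob argument: verify the $H\times H$-equivariant structure directly, then identify the cokernel as the orbifold diagonal $\bigoplus_{h} R^e/(y_j - h\cdot x_j)$ and reduce the kernel property to the non-equivariant Dyckerhoff theorem summand by summand. The paper records exactly this outline in the remark following the statement.

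However, your description of the $H\times H$-action in part (i) is wrong, and this is not just the bookkeeping you flag at the end. You propose that the second factor $h_2$ fixes each summand and acts only on $y_i,\theta_i$. But the $H$-equivariance of $\Delta_W^H$ established before the theorem is for the \emph{diagonal} action on $(x,y,\theta)$, not for a $y$-only action; with your action the Koszul term $s_0 = \sum_i (y_i - x_i)\partial_{\theta_i}$ inside $d(hx,y)$ is not preserved. The correct action, written out immediately after the theorem in the paper, is
\[
(h_1\times h_2)\cdot \bigl(r(x,y)\,v\bigr)\;=\;r(h_1 x,\,h_2 y)\,\rho(h_2)v\;\in\;\Delta_W^{\,h_1 h h_2^{-1}},
\]
so \emph{both} factors permute the summands, and only $\rho(h_2)$ acts on the module generator. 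The compatibility with the differential is then checked by factoring $(h_1\times h_2)=(h_1 h h_2^{-1}\times 1)(h_2\times h_2)(h^{-1}\times 1)$; the middle factor is where the diagonal $H$-equivariance of $d(x,y)$ is actually used. Your sentence ``for the second factor \dots\ reduces to the $H$-equivariance of $\Delta_W^H$'' is the right instinct applied to the wrong action. Once you correct the action as above, the rest of your argument for (ii) goes through essentially as written.
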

 
 \begin{notation}\label{not:moduleaction}
For an $R$-module $M$, Let $rv$ be an element in $M$ with $r\in R$. We distinguish the action on the generator $v$ by $\rho$ as follows:
\[ h\cdot (rv)=(h\cdot r)\rho(h)v.\]
\end{notation}

We explain $H \times H$-equivariant structure of $\Delta_W^{H\times H}$ by describing the action of $h_1\times h_2$. Denote each summand of $\Delta_W^{H\times H}$ by
\[ \Delta_W^h:=\big(R^e[\theta_1,\cdots,\theta_n],d(h x,y)\big).\]
For an element $r(x,y)v\in \Delta_W^h$, 
\begin{equation}\label{eq:hhaction}
 (h_1\times h_2)\cdot (r(x,y)v):= r(h_1 x,h_2 y)\rho(h_2)v \in \Delta_W^{h_1hh_2^{-1}}.
 \end{equation}
Then the $H\times H$-equivariance of $\Delta_W^{H\times H}$ is verified as follows:
\begin{align*}
(h_1\times h_2)\cdot \big(d(h x,y)(r(x,y)v)\big) &= (h_1 h h_2^{-1}\times 1)\cdot (h_2\times h_2) \cdot (h^{-1}\times 1)\cdot \big(d(h x,y)(r(x,y)v)\big) \\
&=(h_1 h h_2^{-1}\times 1)\cdot(h_2\times h_2) \cdot \big(d(x,y)(r(h^{-1} x,y)v)\big) \\
&= (h_1 h h_2^{-1}\times 1)\cdot \Big(d(x,y)\big((h_2\times h_2)\cdot (r(h^{-1}x,y)v)\big)\Big)\\
&=(h_1 h h_2^{-1}\times 1)\cdot \Big(d(x,y)\big((r(h_2 h^{-1} x,h_2 y) \rho(h_2) v)\big)\Big)\\
&= d(h_1 h h_2^{-1}x,y)\big( r(h_1 x, h_2 y)\rho(h_2)v\big)\\
&= d(h_1 h h_2^{-1}x,y)\big( (h_1\times h_2)\cdot (r(x,y)v)\big).
\end{align*}
Note that we essentially used $H$-equivariance of $d(x,y)$ for the third equality.

\begin{remark}\label{rmk:equivkernel}
The description of a kernel for $\mathrm{Id}_{MF_{H}(W)}$ does not depend on the above specific construction of $\Delta_W^H$. The proof of the Theorem \ref{PVtheorem} is based on facts that $\Delta_W^H$ is $H$-equivariant, and that the cokernel of $\Delta_W^H$ is isomorphic to $R^e/(y_1- x_1,\cdots,y_n-x_n)$ as a maximal Cohen-Macaulay module over $R^e/(W(y)-W(x))$. The latter fact implies that the cokernel of $\Delta_W^{H\times H}$ is isomorphic to $\bigoplus_{h \in H}R^e/(y_1-h\cdot x_1,\cdots,y_n-h\cdot x_n)$ as a MCM module. Any matrix factorization $P$ which is also $H$-equivariant and isomorphic to $\Delta_W$ as a nonequivariant matrix factorization can be used to construct a kernel for $\mathrm{Id}_{MF_H(W)}$ in the same way as in Theorem \ref{PVtheorem}. We will appeal to this point later when we encounter a matrix factorization of $W(y)-W(x)$ from Floer theory.
\end{remark} 

\section{Preliminaries on orbifold Jacobian algebras}\label{sec:oja}
Consider a finite abelian group $H$ acting on the commutative algebra $R=k[x_1,\cdots,x_n]$.
 Let $W$ be a $H$-invariant element in $R$.
Then the triple $(R,W,H)$ is called an orbifold Landau-Ginzburg model, and we are interested in closed string theory
of it as a $B$-model.  It is given by an orbifold Jacobian algebra or equivalently defined as Hochschild cohomology of $G$-equivariant matrix factorization category of $W$.

Let us recall the definition of orbifold Jacobian algebra following \cite{Shk}. In fact it has simple description as a $\Z/2$-graded module, but its product structure is quite non-trivial and interesting. Recall that Jacobian algebra of $W$ is $k$-algebra defined by 
$$\Jac(W) = \frac{k[x_1,\cdots,x_n]}{(\frac{\partial W}{\partial x_1}, \cdots, \frac{\partial W}{\partial x_n})}.$$
For mirror symmetry applications, we take $k =\Lambda$. We remark that  $R=\Lambda[ x_1,\cdots,x_n]$ is a Tate algebra and hence any ideal is closed.

We assume $H$ acts on $R$ diagonally. Namely, for $h \in H$, $h \cdot (x_1,\cdots,x_n) = (h_1 x_1,\cdots, h_n x_n)$
for $h_i \in k^*$.
We set $$I^h:= \{i \mid h_i= 1\},\; I_h=\{i \mid h_i \neq 1\},\; d_h = |I_h|.$$
Hence $d_h$ is the codimension of fixed subspace by $h$,  $\mathrm{Fix}(h) \subset k^n$. 

Denote by $W^h$ the restriction of $W$ to $\mathrm{Fix}(h)$.
\begin{definition}\label{def:ojr}\cite{BTW, Shk}
The {\em twisted Jacobian algebra} of $(R,W,H)$ is a $\Z/2$-graded algebra
$$\Jac'(R,W,H) = \bigoplus_{h \in H} \Jac(W^h) \cdot \xi_h $$
where $\xi_h$ is a formal generator of degree $d_h \mod 2$.
Its product structure is defined as follows.
For  $g,h \in H$ and $\phi_{g} \in \Jac(W^g), \phi_{h} \in \Jac(W^h)$, 
we consider their restriction to $\mathrm{Fix}(gh)$ denoted as 
$\phi_{g}^{gh}, \phi_{h}^{gh}$ and define
$$\phi_{g} \xi_{g} \bullet \phi_{h} \xi_{h} := \phi_{g}^{gh}\cdot \phi_{h}^{gh} \cdot \sigma_{g,h} \cdot \xi_{gh}$$
where the definition of $\sigma_{g,h} \in \Jac(W^{gh})$ will be explained in Definition \ref{def:sig}.
We have 
\begin{equation}\label{degcond}
\sigma_{g,h}=0 \;\; \textrm{if} \;\;  d_{g}+d_{h} - d_{gh} \notin 2\Z
\end{equation}
which implies that the twisted Jacobian algebra $\Jac'(R,W,H)$ is $\Z/2$-graded. 
\end{definition}

\begin{definition}\label{def:ojr2}
Define the $H$-action on $\{ \xi_g \mid g\in G\}$ by
\[ h\cdot \xi_g := \prod_{i \in I_h} h_i^{-1}\xi_g \]
where $h=(h_1,\cdots,h_n)$. The {\em orbifold Jacobian algebra $\Jac(R,W,H)$} is the $H$-invariant part of $\Jac'(R,W,H)$.
\end{definition}
\begin{theorem}[\cite{Shk}]
$\Jac(R,W,H)$ is a $\Z/2$-graded commutative algebra.
\end{theorem}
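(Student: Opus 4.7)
The plan is to establish two assertions: (i) the $H$-invariant subspace is in fact closed under the product $\bullet$ on $\Jac'(R,W,H)$, so that the definition of $\Jac(R,W,H)$ as an algebra makes sense, and (ii) the resulting product is graded commutative with respect to the $\Z/2$-grading given by $|\xi_h| = d_h \bmod 2$.

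For (i), I would unpack the $H$-action on $\Jac'(R,W,H)$: $h$ acts on $\Jac(W^g)$ by its action on $R$ restricted to $\mathrm{Fix}(g)$, and on the formal generator by $h\cdot \xi_g = \bigl(\prod_{i \in I_h} h_i^{-1}\bigr)\xi_g$. Given $\phi_g\xi_g$ and $\phi_h\xi_h$ both $H$-invariant, I would show that $\phi_g^{gh}\phi_h^{gh}\sigma_{g,h}\xi_{gh}$ is again $H$-invariant. The polynomial factors transform in the expected way because restriction to $\mathrm{Fix}(gh)$ is $H$-equivariant; the crux is that $\sigma_{g,h}\xi_{gh}$ transforms as a section of the appropriate $H$-equivariant determinant line over $\mathrm{Fix}(g)\cap \mathrm{Fix}(h) = \mathrm{Fix}(gh)$ (when the degree condition \eqref{degcond} holds), with the characters on the $\sigma$-factor and on $\xi_{gh}$ cancelling to yield invariance.

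For (ii), since $H$ is abelian we have $gh = hg$, so both $\phi_g\xi_g\bullet\phi_h\xi_h$ and $\phi_h\xi_h\bullet\phi_g\xi_g$ land in $\Jac(W^{gh})\xi_{gh}$, and the polynomial factors commute in the ordinary Jacobian algebra $\Jac(W^{gh})$. Graded commutativity therefore reduces to the single identity
\[
\sigma_{g,h} \;=\; (-1)^{d_g d_h}\,\sigma_{h,g} \qquad \text{in } \Jac(W^{gh}).
\]
I would derive this from the explicit description of $\sigma_{g,h}$ (to be given in Definition \ref{def:sig}) as a Hessian-type determinant formed from the ``anti-fixed'' directions of $g$ and $h$: swapping $g$ and $h$ reorders a block of $d_g$ columns past a block of $d_h$ columns, producing the sign $(-1)^{d_g d_h}$, while the directions common to $I_g\cap I_h$ contribute symmetrically. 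Combined with the degree condition \eqref{degcond}, which kills both sides when the parity is wrong, this yields the required symmetry.

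The main obstacle I expect is step (ii): the identity $\sigma_{g,h} = (-1)^{d_g d_h}\sigma_{h,g}$ is not valid on the nose in $R$ but only modulo the ideal defining $\Jac(W^{gh})$, and even then it may genuinely fail on non-invariant classes, so the passage to the $H$-invariant part is essential rather than cosmetic. Establishing it cleanly requires the precise Hessian formula for $\sigma_{g,h}$ and a careful bookkeeping of orientations on the anti-invariant subspaces, which is the technical heart of Shklyarov's construction \cite{Shk}; once this is in place, the theorem follows directly from (i) and (ii).
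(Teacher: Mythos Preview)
The paper does not prove this theorem; it is stated as a result of Shklyarov and cited with the reference \cite{Shk}, with no proof sketch given in the present manuscript. So there is no ``paper's own proof'' to compare against---your proposal is a standalone attempt.

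As a sketch, your outline is reasonable and aligned with how Shklyarov actually proceeds: closure under $H$-invariants reduces to tracking the character on $\sigma_{g,h}$, and graded commutativity reduces to the identity $\sigma_{g,h} = (-1)^{d_g d_h}\sigma_{h,g}$ in $\Jac(W^{gh})$. You are right to flag that this last identity is the real content and that it only holds after passing to the Jacobian (and, in Shklyarov's setup, is tied to the explicit formula \eqref{eq:twjacprod} rather than a pure ``Hessian determinant'' as you describe it). Your description of $\sigma_{g,h}$ as a Hessian determinant is an oversimplification of Definition~\ref{def:sig}: the actual expression involves the exponential of $H_W$ and the Clifford-type contraction $\Upsilon$, and the sign $(-1)^{d_g d_h}$ emerges from the Koszul sign in $\Upsilon$ when the two tensor factors are swapped, not from a column-reordering in a determinant. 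If you want a genuine proof you would need to carry out that Clifford-algebra computation rather than the determinantal one you sketch; the latter does not directly apply to \eqref{eq:twjacprod}.
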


Let us explain the definition of the products between $\xi_h$ following \cite{Shk}.
The construction is based on an explicit relation between bar resolution and Koszul resolution and
we refer readers to the references for the details.  Consider a morphism
\begin{align*}
 \Delta_0: R^{\otimes 2}\otimes k[\theta_1,\cdots,\theta_n] & \to R^{\otimes 3}  \otimes k[\theta_1,\cdots,\theta_n]^{\otimes 2}, \\
 f(x,y)\cdot p(\theta_1,\cdots,\theta_n) & \mapsto f(x,{z})\cdot p(\theta_1\otimes 1+1\otimes \theta_1,\cdots, \theta_n\otimes 1 + 1\otimes \theta_n)
  \end{align*}
and $\Delta:=e^{H_W}\cdot \Delta_0$, where $H_W$ is an element in $R^{\otimes 3}\otimes k[\theta_1,\cdots,\theta_n]^{\otimes 2} $ defined by
\[ H_W(x,y,{z}):=\sum_{1\leq j \leq i \leq n}\nabla_j^{y \to (y,{z})}\nabla_i^{x \to (x,y)}(W) \theta_i \otimes \theta_j.\]
Identifying 
$R^{\otimes 2}[\theta_1,\cdots,\theta_n] \otimes_R R^{\otimes 2}[\theta_1,\cdots,\theta_n] \cong R^{\otimes 3}\otimes k[\theta_1,\cdots,\theta_n]^{\otimes 2} \cong $ via 
\[  f_1(x,y)p_1({\theta}) \otimes f_2(y,{z})p_2({\theta}) \leftrightarrow f_1(x,y)f_2(y,{z})\otimes p_1({\theta})\otimes p_2({\theta}).\]

\begin{definition}\label{def:sig}\cite{Shk}
The structure coefficient $\sigma_{g,h}\in \Jac(W|_{\mathrm{Fix}(gh)})$ of the product between $g$- and $h$-sectors is given by the coefficient of $\partial_{\theta_{I_{gh}}}:=\prod_{i \in I_{gh}} \partial_{\theta_i}$ in the following expression
\begin{equation}\label{eq:twjacprod}
\frac{1}{d_{g,h}!} \Upsilon\big( (\lfloor H_W(x,g\cdot x,x)\rfloor_{gh}+\lfloor H_{W,g}(x)\rfloor_{gh}\otimes 1 + 1\otimes \lfloor H_{W,h}(g\cdot x)\rfloor_{gh})^{d_{g,h}}\otimes \partial_{\theta_{I_g}} \otimes \partial_{\theta_{I_h}}\big). 
\end{equation}
\end{definition}
We explain various notations in \eqref{eq:twjacprod}.
\begin{itemize}
\item $\displaystyle H_{W,g}:= \sum_{i,j\in I_g, j<i}\frac{1}{1-g_j}\nabla_j^{x \to (x,x^g)}\nabla_i^{x \to (x,g\cdot x)}(W)\theta_j \theta_i \in R[\theta_1,\cdots,\theta_n]$ where $x_i^g=x_i$ if $i\in I^g$ and $x_i^g=0$ if $i\in I_g$. The operations $\nabla_i^{x \to (x,g\cdot x)}$ and $\nabla_i^{x \to (x,x^g)}$ are computed by $\nabla_i^{x \to (x,y)}$ followed by substitutions $y=g\cdot x$ and $y=x^g$, respectively.
\item $\lfloor f \rfloor_{g}:= [f|_{\mathrm{Fix}(g)}] \in \Jac(W|_{\mathrm{Fix}(g)}).$
\item $d_{g,h}:=\frac{d_g+d_h-d_{gh}}{2}$. We define $\sigma_{g,h}=0$ if $d_{g,h}$ is not an integer.
\item The map $\Upsilon$ is defined as
\[\Upsilon:R[\theta_1,\cdots,\theta_n]^{\otimes 2}\otimes R[\partial_{\theta_1},\cdots,\partial_{\theta_n}]^{\otimes 2} \to R[\partial_{\theta_1},\cdots,\partial_{\theta_n}],\] 
\[ p_1({\theta})\otimes p_2({\theta})\otimes q_1({\partial_\theta})\otimes q_2({\partial_\theta}) \mapsto (-1)^{|q_1||p_2|}p_1(q_1)\cdot p_2(q_2),\]
where $p_i (q_i)$ means the action of $p_i(\theta)$ on $q_i(\partial_\theta)$ on $\mathrm{Cl}_n / \mathrm{Cl}_n\cdot \langle \theta_1,\cdots,\theta_n \rangle$  via \eqref{eq:cliffrel}.
\end{itemize}
Then the product structure on $\Jac'(R,W,H)$ is defined as follows:
\[ \lfloor f_1\rfloor_g\cdot \xi_g \bullet \lfloor f_2 \rfloor_h\cdot \xi_h:= \lfloor f_1 f_2 \rfloor_{gh}\cdot\sigma_{g,h}\xi_{gh}.\]
We end this section by mentioning the following important theorem.
\begin{theorem}[\cite{Shk}]\label{thm:orbjachh}
Orbifold Jacobian algebra is isomorphic to Hochschild cohomology algebra of $H$-equivariant matrix factorization category.
$$ \Jac(R,W,H)\cong H^*(MF_H(W), MF_H(W)).$$
\end{theorem}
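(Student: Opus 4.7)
The plan is to realize the Hochschild cohomology as morphisms of the identity kernel in the equivariant matrix factorization category and then identify this hom-space, as an algebra, with the orbifold Jacobian algebra sector by sector. First, I would invoke Dyckerhoff's kernel theorem (the equivariant analogue of \eqref{eq:ker}) together with Definition \ref{def:hhcohomology} to identify
\[
HH^*(MF_H(W),MF_H(W)) \cong H^*\bigl(hom_{MF_{H\times H}(W(y)-W(x))}(\Delta_W^{H\times H},\Delta_W^{H\times H})\bigr),
\]
using Theorem \ref{PVtheorem} to supply $\Delta_W^{H\times H}=\bigoplus_{h\in H}\Delta_W^h$ as the kernel for $\mathrm{Id}_{MF_H(W)}$. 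This reduces the problem to a computation inside the ordinary (non-orbifold) matrix factorization category of $W(y)-W(x)$, together with tracking of the $H\times H$-equivariant structure described in \eqref{eq:hhaction}.

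Next, I would compute the underlying graded module. A morphism between Koszul matrix factorizations $\Delta_W \to \Delta_W^h$ is, up to homotopy, controlled by the cokernels $R^e/(y-x)$ and $R^e/(y-h\cdot x)$: homotopy classes of maps are Ext-classes between MCM modules over $R^e/(W(y)-W(x))$, and a standard calculation identifies these with $\Jac(W^h)$ after restricting to the fixed locus $\mathrm{Fix}(h)=\{y=h\cdot x, x_i=0\text{ for }i\in I_h\}$. Taking the $H\times H$-invariant sub-bimodule then picks out, for each $h$, the $H$-invariants of $\Jac(W^h)\cdot\xi_h$ with the twist given by the formal generator $\xi_h$ whose $H$-action is precisely Definition \ref{def:ojr2}. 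This matches $\Jac(R,W,H)=\Jac'(R,W,H)^H$ as a $\Z/2$-graded module, and the parity $d_h\!\!\mod 2$ comes out correctly because a morphism into $\Delta_W^h$ shifts internal Clifford degree by $d_h$.

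The substantive step, which I expect to be the main obstacle, is matching the algebra structure. Composition of Hochschild cochains is computed by composing maps of kernels: given cocycles $\phi_g\in\Hom(\Delta_W^g,\Delta_W^{g'})$ and $\phi_h\in\Hom(\Delta_W^h,\Delta_W^{h'})$ contributing to the $g$- and $h$-sectors, their product lands in the $gh$-sector and is obtained by tensoring over the middle copy of $R$ and collapsing via a comparison between the Koszul resolution of the diagonal and the bar resolution. The explicit chain-level formula for this comparison is precisely what produces the element $e^{H_W}\Delta_0$ used in Shklyarov's construction of $\sigma_{g,h}$ (Definition \ref{def:sig}); carrying through the contraction and the sign from the $\Z/2$-grading yields the structure constant as the coefficient of $\partial_{\theta_{I_{gh}}}$ in the expression \eqref{eq:twjacprod}, including the degeneration to zero when $d_g+d_h-d_{gh}\notin 2\Z$ as in \eqref{degcond}. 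The dictionary $\lfloor\cdot\rfloor_g$ arises naturally because composition automatically restricts cochains to the appropriate fixed locus, and the terms $H_{W,g}$ and $H_{W,h}$ appear from the diagonal twists needed to realign the branches $\Delta_W^g$ and $\Delta_W^h$ over a common copy of $R$.

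Finally, I would check that the isomorphism thus obtained is compatible with the $H$-actions on both sides so that restricting to invariants on the matrix-factorization side matches the orbifold construction in Definition \ref{def:ojr2}, and that the associativity and supercommutativity already guaranteed on the Hochschild side translate to the claimed structure on $\Jac(R,W,H)$. This last step is formal given that both sides are supported on the same sectors with the same structure constants, and it completes the quasi-isomorphism of $\Z/2$-graded algebras asserted by the theorem.
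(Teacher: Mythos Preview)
The paper does not give its own proof of this theorem: it is stated as a result of Shklyarov \cite{Shk} and simply cited without argument. The fragments the paper does recall elsewhere---in the proof of Theorem~\ref{thm:main}, where $HH^*(MF_{\WH{G}}(W))$ is identified with $\Hom_{MF_{\WH{G}\times\WH{G}}(W(y)-W(x))}(\Delta_W^{\WH{G}\times\WH{G}},\Delta_W^{\WH{G}\times\WH{G}})$ via Polishchuk--Vaintrob, and in Section~\ref{sec:ag}, where Shklyarov's mixed complex $\mathcal{K}^*(R_W,R_W[\WH{G}])$ and the operator $e^{H_{W,\chi}}$ are reviewed---are exactly the ingredients you invoke. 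Your outline is a faithful sketch of the strategy in \cite{Shk} and is consistent with how the paper uses the result; there is nothing further in the paper to compare against.
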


\section{MC formalism of $\AI$-algebra $\mathcal{A}$ and a new $\AI$-algebra $\mathcal{B}$}\label{sec:mc}
Let $\mathcal{A} := (V,\{m_k\})$ is a unital gapped filtered $\AI$-algebra over $\Lambda$ .
In this section, we use  a  Maurer-Cartan formalism of $\mathcal{A}$ \cite{FOOO} to give a definition of  a new $\AI$-algebra $\mathcal{B}$. If $\mathcal{A}$ is weakly unobstructed with a potential function $W$,  $m_1$-cohomology of $\mathcal{B}$ is shown to be isomorphic to Jacobian ring of $W$.

\subsection{Maurer-Cartan setup}
We briefly recall Maurer-Cartan formalism of $\AI$-algebra from \cite{FOOO} to set the notations.
Here $V$ be a $\Z/2$-graded module over $\Lambda_0$ equipped with
$\Lambda_0$-multi-linear $\AI$-operation $m_k:V^{\otimes k} \to V$ of degree $2-k$.
We assume that it is gapped filtered (which means $m_k = \sum_{\beta \in G} m_{k,\beta}$ for a
monoid $G \subset \mathbb{R}_{\geq0}$ such that $\{ x \in G \mid x \leq N\}$ is finite for any $N \in \mathbb{N}$), and $V \otimes_{\Lambda_0} \Lambda$ defines the $A_\infty$-algebra. 
We denote by $\be$ a unit of $\AI$-algebra.
For an element $b \in F_{+}V$ with positive valuation, weak Maurer-Cartan equation is given by
$$ m_0  +m_1(b) + m_2(b,b) + \cdots  = W(b) \be,$$
The sum in the left hand side converges in $T$-adic sense and it is sometimes written as $m(e^b)$.
Given $b_0,\cdots, b_k$, we can deform an $\AI$-operation $m_k$ to
\begin{equation}\label{eq:dd}
m_k^{b_0,\cdots,b_k} (w_1,\cdots,w_k):=\sum_{l_0,\cdots,l_k \geq 0} m_{k+l_0+\cdots+l_k}(b_0^{l_0},w_1,b_1^{l_1},
\cdots,b_{k-1}^{l_{k-1}},w_k,b_k^{l_k}).
\end{equation}
For the case $b_0 = \cdots =b_k=b$, we write $m_k^b := m_k^{b,\cdots,b}$.
Maurer-Cartan equation can be restated as $m_0^b = W(b) \be$.
If $b$ satisfies MC equation, $m_1^b$ defines a chain complex, and this was used
to define Floer cohomology of a Lagrangian $L$ with $m_0 \neq 0$ in \cite{FOOO}.

We will decorate the $\AI$-algebra with formal parameters of deformation as follows.
Take $e_1,\cdots,e_n \in V^{1}.$
Assume that  any $b:= \sum x_i e_i \in \Lambda_+ \langle e_1,\cdots,e_n \rangle$
satisfies the weak MC equation $m(e^b) = W(b) \cdot \be$. Here, $x_i$'s are (formal) dual variables to $e_i$'s
and may be considered as coordinate functions of Maurer-Cartan solution space.

In applications, one may take either $\{x_i\}$ or $\{e^{x_i}\}$ as local mirror coordinates.
In mirror symmetry, the former corresponds to immersed Lagrangians, and the latter for Lagrangian torus.
In this paper, we will develop the theory for the variables $\{x_i\}$. The theory for exponentiated variables
can be constructed analogously following the localized mirror construction of \cite{CHL2} of Lagrangian torus.
We refer readers to Section 3.3 and Lemma 4.2 of \cite{CLS} to see the summary and basic constructions for the case of exponentiated variables.

\subsection{$\mathrm{Jac}(W)$ from MC theory}
Now, we set $k = \Lambda$ and still denote by $R= \Lambda[x_1,\cdots,x_n]$   the completion of the polynomial ring with respect to the valuation $\nu$ of $\Lambda$.

For a gapped filtered $\AI$-algebra $\mathcal{A} = (V, \{m_k\})$, we assume that at least its cohomology is finite dimensional:
If $\Lambda_0$-module $V$ itself is not finitely generated, we take its canonical model following \cite{M} and \cite{FOOO_can}.
We choose a finite dimensional subspace $H \subset V$, $\pi:V \to H, i:H \to V$ as well as the contraction homotopy $Q$
such that $\mathrm{id} - \pi = -(m_1 Q +Q m_1)$. Then, it is well known that we can transfer the $\AI$-structure on $H$ and
find $\AI$-quasi-isomorphisms between them in a combinatorial way. 

\begin{definition}\label{def:B}
We take a  tensor product $V  \otimes_{\Lambda_0} R$.
By linearly extending $\AI$-operation over formal variables $x_1,\cdots,x_n$ (with $b = \sum_{i=1}^n x_i X_i$), 
 $\{m_k^b\}$ defines an $\AI$-algebra on $V \otimes_{\Lambda_0} R  $. We denote $$ \mathcal{B} := (V \otimes_{\Lambda_0} R, \{m_k^b\}).$$
 If $b$ satisfies the Maurer-Cartan equation $m_0^b= W(b) \cdot \be$, and  $(\mathcal{B},m_1^b)$ defines a chain complex, and
 $m_2^b$ defines a product on its cohomology
\end{definition}
\begin{remark}
Cohomology of $\AI$-algebras $\CA$ and $\CB$ are quite different.
In mirror symmetry applications, $\CA$ is $\AI$-algebra for a Lagrangian $L$ (open string theory), but
associated $m_1^b$ cohomology $\CB$ will be Jacobian ring of the potential $W_L$ (closed string theory).
\end{remark}
\begin{remark}
The process of taking canonical model, and tensoring $R$ can be taken at the same time (which was used in Section 4.2 \cite{ACHL}).
Namely, we may carry out the same construction of
canonical model for $H \otimes \Lambda_0[x_1,\cdots,x_n] \subset V \otimes \Lambda_0[x_1,\cdots,x_n]$ and
transfer $\AI$-structure $\{m_k^b\}$ on the latter to the former. We can construct $\AI$-quasi-isomorphisms between them as before.
\end{remark}

We find a sufficient condition that  cohomology of $\CB$ becomes Jacobian ring.
We first define $e_I, e_I^b$ for a subset $I \subset \{1,\cdots,n\}$, which are successive $m_2$ or $m_2^b$ products.
For any  $I=\{i_1,\cdots,i_k\}$ with $i_1 < i_2 < \cdots <i_k$,  we set $e_\emptyset = e_\emptyset^b = \be$ and 
denote 
\begin{eqnarray}\label{eq:11}
e_I &=&m_2(e_{i_1},m_2( e_{i_2}, m_2( \cdots, e_{i_k}) \cdots )) \\
e_I^b &=& m_2^b(e_{i_1},m_2^b( e_{i_2}, m_2^b( \cdots, e_{i_k}) \cdots )).
\end{eqnarray}

\begin{assumption}\label{as1}
$R$-module $V  \otimes_{\Lambda_0} R$ is generated by $\{e_I^b\}_{I \subset \{1,\cdots n\}}$.
\end{assumption}

We compare the generation of $e_I$ and $e_I^b$.
From the gapped filtered condition (with the valuation $\nu$ on $\Lambda$-modules) we obtain the following lemma.
\begin{lemma}\label{lem:ge}
Suppose $\Lambda$-module $V \otimes_{\Lambda_0} \Lambda$ is generated by $\{e_I\}_{I \subset \{1,\cdots n\}}$
and that  
$$\nu( m_2^b(v,w) - m_2(v,w) ) > \epsilon$$ for some positive $\epsilon>0$ for any $v,w$.
Then $V  \otimes_{\Lambda_0} R$ satisfies the Assumption \ref{as1}.
\end{lemma}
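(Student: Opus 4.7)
The plan is to establish that $\{e_I^b\}$ generates $V\otimes_{\Lambda_0} R$ over $R$ via a $T$-adic successive approximation. Since $V\otimes_{\Lambda_0}\Lambda$ is generated by $\{e_I\}$ over $\Lambda$, base change gives that $V\otimes_{\Lambda_0}R$ is generated by $\{e_I\}$ over $R$, and after passing to a canonical model of $\mathcal{A}$ and extracting a basis from the spanning set $\{e_I\}$ one may fix a bounded $R$-linear section $\sigma$ of the corresponding surjection $\Psi:\bigoplus_I R\twoheadrightarrow V\otimes_{\Lambda_0}R$, $(c_I)\mapsto\sum_I c_I e_I$. I will then show that the parallel map $\Phi:(c_I)\mapsto\sum_I c_I e_I^b$ is a small $T$-adic perturbation of $\Psi$, and hence also surjective.

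The first key step is to prove by induction on $|I|$ that
\[ \nu\bigl(e_I^b - e_I\bigr) > \epsilon. \]
For $|I|\leq 1$ the two elements coincide. For $|I|\geq 2$, write $I=\{i_1\}\sqcup I'$ with $i_1=\min I$; then
\[ e_I^b - e_I \;=\; \bigl(m_2^b - m_2\bigr)(e_{i_1},\, e_{I'}^b) \;+\; m_2\bigl(e_{i_1},\, e_{I'}^b - e_{I'}\bigr), \]
where the first summand has valuation greater than $\epsilon$ by hypothesis and the second by the inductive hypothesis, using that $m_2$ is valuation-nondecreasing by the gapped filtered structure.

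The second step is the iteration. Given $w\in V\otimes_{\Lambda_0}R$, set $c^{(0)}:=\sigma(w)$ so that $w=\Psi(c^{(0)})$ and $\nu(c^{(0)})\geq\nu(w)-C$ where $C$ is the operator norm of $\sigma$. Then
\[ w - \Phi(c^{(0)}) \;=\; \sum_I c_I^{(0)}\bigl(e_I - e_I^b\bigr) \]
has valuation at least $\nu(w) - C + \epsilon$ by Step~1. Replacing $w$ by $w - \Phi(c^{(0)})$ and iterating yields corrections $c^{(j)}$ with $\nu(c^{(j)})\to\infty$; by $T$-adic completeness of $R$, the sum $c:=\sum_j c^{(j)}\in\bigoplus_I R$ is well defined and satisfies $\Phi(c)=w$.

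The main subtlety will lie in producing the bounded section $\sigma$ and confirming that the iteration converges in the completed $T$-adic topology on $V\otimes_{\Lambda_0}R$; both hinge on the gapped filtered hypothesis, which supplies a uniform positive gap in the $T$-adic filtration and thus geometric decay of the successive errors.
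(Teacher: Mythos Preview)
The paper does not give a proof of this lemma; it only remarks that the statement follows ``from the gapped filtered condition (with the valuation $\nu$ on $\Lambda$-modules)''. Your two-step outline---an induction on $|I|$ to show $\nu(e_I^b-e_I)>\epsilon$, followed by a $T$-adic approximation to pass from the generating set $\{e_I\}$ to $\{e_I^b\}$---is exactly the argument the paper is gesturing at, and Step~1 is carried out correctly.

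There is one point in Step~2 that deserves more care. Your iteration gives $\nu(w^{(j+1)})\geq\nu(c^{(j)})+\epsilon\geq\nu(w^{(j)})-C+\epsilon$, so convergence requires $\epsilon>C$, where $C$ is the norm of the section $\sigma$. You flag this as a subtlety and attribute its resolution to the gapped filtered hypothesis, but the gap $\epsilon$ controls $e_I^b-e_I$, not the section: $C$ depends only on how the $\Lambda$-spanning set $\{e_I\}$ sits inside the $\Lambda_0$-lattice $V$, and nothing in the stated hypotheses forces $C<\epsilon$. The cleanest fix is to note that in the situations where the lemma is applied (monotone Lagrangian tori and the Seidel Lagrangian, as in the two lemmas immediately following), the $\{e_I\}$ are in fact a $\Lambda_0$-basis of the canonical model, so one may take $C=0$; equivalently, the transition matrix from $\{e_I\}$ to a $\Lambda_0$-basis of $V$ is invertible over $\Lambda_0$, so that $(\text{id}+M^{-1}D)$ with $\nu(D)>\epsilon$ inverts by a Neumann series. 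With that observation in place your argument is complete and matches what the paper has in mind.
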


\subsection{Examples satisfying Assumption \ref{as1}}
\begin{lemma}
Any  Lagrangian torus $L$ (at critical points of the potential $W_L$) satisfies the Assumption \ref{as1}.
\end{lemma}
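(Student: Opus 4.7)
The plan is to verify the two hypotheses of Lemma \ref{lem:ge} for the Lagrangian torus $L = T^n$. First, pass to a canonical (minimal) model, so that the underlying vector space $V$ is identified with $H^*(T^n;\Lambda_0)$, and choose $e_1,\cdots,e_n \in V^1$ to be the standard degree-one cohomology generators dual to a basis of $H_1(T^n)$; then $V$ has rank $2^n$ over $\Lambda_0$.

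To verify the first hypothesis of Lemma \ref{lem:ge}, namely that $V \otimes_{\Lambda_0} \Lambda$ is generated as a $\Lambda$-module by the iterated products $\{e_I\}_{I \subset \{1,\cdots,n\}}$, I would observe that the $T^0$-part of $m_2$ on the canonical model coincides with the classical cup product on $H^*(T^n;\Lambda)$, which is the exterior algebra on $e_1,\cdots,e_n$. In particular the classical wedge products $e_{i_1}\wedge\cdots\wedge e_{i_k}$ already form a $\Lambda$-basis indexed by subsets $I = \{i_1 < \cdots < i_k\}$. The full $m_2$ differs from $\cup$ only by contributions of non-constant stable holomorphic discs bounding $L$, whose $T$-valuations are bounded below by some $\omega_{\min} > 0$ by Gromov compactness. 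Consequently the change-of-basis matrix from the classical wedge products to the $\{e_I\}$ has the form $\id + T^{\omega_{\min}} N$ with $N$ of entries in $\Lambda_0$, and is invertible over $\Lambda_0$ by a Neumann series, so $\{e_I\}$ itself is a $\Lambda$-basis.

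To verify the second hypothesis, namely $\nu(m_2^b(v,w) - m_2(v,w)) > \epsilon$ for some positive $\epsilon$, note that by \eqref{eq:dd} the difference $m_2^b - m_2$ is a sum over configurations with at least one insertion of $b$, so every contributing term carries a factor of $b$. The parenthetical ``at critical points of the potential $W_L$'' is what allows us to evaluate the formal coordinates $x_i$ so that $b = \sum x_i X_i$ has strictly positive Novikov valuation: for a Lagrangian torus, critical points of $W_L$ are most naturally described in exponentiated coordinates $y_i = e^{x_i}$ as in the localized mirror setup of \cite{CLS}, and at any such critical point $\nu(y_i) > 0$. Hence $\nu(b) > 0$ and $\nu(m_2^b - m_2) \geq \nu(b) > 0$, and Lemma \ref{lem:ge} concludes that Assumption \ref{as1} holds.

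The main obstacle I anticipate is reconciling the formal-variable setup of Definition \ref{def:B}, where the $x_i$ are formal variables in $R$, with the Floer theory of a Lagrangian torus whose deformation parameters are naturally exponentiated $y_i = e^{x_i}$. The ``at critical points'' restriction circumvents this by allowing specialization to an honest MC element with Novikov-positive coefficients, but making this rigorous requires the parallel theory for exponentiated variables hinted at via the reference to Section 3.3 of \cite{CLS}.
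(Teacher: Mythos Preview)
Your verification of the first hypothesis of Lemma \ref{lem:ge} is correct and essentially the same as the paper's. The gap is in the second hypothesis.

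You argue that $m_2^b-m_2$ carries at least one factor of $b$ and then claim $\nu(b)>0$ by specializing to a critical point, asserting $\nu(y_i)>0$ there. This is wrong on two counts. First, Assumption~\ref{as1} concerns the $R$-module $V\otimes_{\Lambda_0}R$ with $b=\sum x_iX_i$ a \emph{formal} element; in $R$ the variables $x_i$ have Novikov valuation $0$, so $\nu(b)=0$ and the inequality $\nu(m_2^b-m_2)\ge\nu(b)$ gives nothing. Second, even under specialization, critical points of the torus potential in exponentiated coordinates have $y_i\in\Lambda_0^\times$, i.e.\ $\nu(y_i)=0$, not $>0$; the paper's normalization is precisely to translate the critical point to $z=(1,\dots,1)$, where $b=0$. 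The ``obstacle'' you flag in the last paragraph is really a symptom of this gap rather than a separate issue.

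The paper's argument for the second hypothesis is different and does not use any positivity of $b$. It invokes formality of the torus: on a canonical model of $H^*(T^n)$ one may arrange $m_{k,0}^{\mathrm{can}}=0$ for all $k\ge 3$. Then the expansion
\[
m_2^b(v,w)-m_2(v,w)=\sum_{k+l\ge 1} m_{2+k+l}(b^{\otimes k},v,\dots,w,b^{\otimes l})
\]
involves only $m_j$ with $j\ge 3$, whose classical parts vanish, so every surviving term comes from $m_{j,+}$ and carries strictly positive $T$-valuation. Gappedness then gives the uniform $\epsilon>0$ required by Lemma \ref{lem:ge}. In short, the missing ingredient is formality of $T^n$, not positivity of the bounding cochain.
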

\begin{proof}
We will show that $e_I$ generate $V=H^*(L,\Lambda)$ and use it to show that $e_I^b$ generate $V  \otimes_{\Lambda_0} R$.
Let $L$ be a Lagrangian torus, with exponential coordinates $z_i = e^{x_i}$ on MC space.
Assume that the potential function $W_L(z)$ has a critical point at $(1,\cdots,1)$ (which can be achieved for
any critical point by change of coordinates).
It is well-known that in this case $L$ with $b=0$ has non-trivial Floer cohomology, isomorphic to singular cohomology of $L$  
(see \cite{CHL2} for example).
For degree one generators $e_1,\cdots, e_n$ of singular cohomology, $m_{2,0}$ products generate the whole cohomology
classes, and therefore so do their $m_2$ products (because of the filtration).
We argue that their $m_2^b$ product generate $H^*(L;\Lambda) \otimes \Lambda[x_1,\cdots,x_n]$.
To see this, first we may take a canonical model $m_k^{can}$ of $\AI$-algebra. We may further assume that classical part $m_{k,0}^{can}=0$ for $k \geq 3$ since $L$ is a torus. Here we write $m_k = m_{k,0} +m_{k,+}$ where $m_{k,0}$ is the classical $\AI$-structure on $L$,
and $m_{k,+}$ are defined using non-constant holomorphic discs.
Then, $m_{2,0}^b(v,w) = \sum m_{k,0}(e^b,v,e^b,w,e^b) = m_{2,0}(v,w)$. Therefore, $e_I^b-e_I$ has strictly positive energy. 
We apply the previous lemma to obtain the claim.
\end{proof}

\begin{lemma}\label{lem:sa1}
The $\AI$-algebra of Seidel Lagrangian \cite{S} in $\mathbb{P}^1_{a,b,c}$ satisfies the above assumption.
\end{lemma}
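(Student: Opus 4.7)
The plan is to show that the eight nested products $\{e_I^b : I \subseteq \{1,2,3\}\}$ form a free $R$-basis of $V \otimes_{\Lambda_0} R$, where $V$ is a canonical model for the Fukaya $A_\infty$-algebra $\mathcal{A}(\overline{L})$ of the Seidel Lagrangian $\overline{L} \subset \mathbb{P}^1_{a,b,c}$. First I would recall, from \cite{CHL}, that $V$ has rank $8$, with basis given by the unit $\mathbf{1}$, three ``forward'' immersed generators $X_1,X_2,X_3$ at the three self-intersection points of $\overline{L}$, three ``reverse'' immersed generators $\bar{X}_1,\bar{X}_2,\bar{X}_3$, and the point class $[\mathrm{pt}]_{\overline{L}}$. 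The bounding cochain $b = \sum_{i=1}^3 x_i X_i$ is weakly unobstructed with potential $W$ in the variables $x_1,x_2,x_3$, so $n = 3$ and $2^n = 8 = \mathrm{rk}_R(V \otimes R)$.

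The main step is to track the leading-order (lowest $T$-energy, lowest $b$-order) contribution of each $e_I^b$ in the natural basis of $V$. Using the classification of minimal holomorphic polygons bounding $\overline{L}$ from \cite{CHL}, one identifies the two small triangles with vertices at the three self-intersection points as giving the leading contribution to $m_2^b(X_i, X_j)$ for $i \neq j$; by tracking the output branches of these triangles, I expect
\[
e_{\{i,j\}}^b = m_2^b(X_i, X_j) = u_{ij}\,T^{E_{ij}}\,\bar{X}_k + (\text{higher order in } T \text{ and the } x_\ell)
\]
for each cyclic $(i,j,k)$ and some Novikov unit $u_{ij}\in\Lambda^\ast$. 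Iterating once more and using the classical intersection pairing $m_{2,0}(X_k, \bar{X}_k) = [\mathrm{pt}]_{\overline{L}}$ at the $k$-th self-intersection gives
\[
e_{\{1,2,3\}}^b = m_2^b\bigl(X_1, m_2^b(X_2, X_3)\bigr) = u\cdot T^{E}\,[\mathrm{pt}]_{\overline{L}} + (\text{higher order}).
\]

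With the basis of $V$ and the index set $\{I : I \subseteq \{1,2,3\}\}$ arranged in matching order, the resulting $8 \times 8$ change-of-basis matrix takes the form $D + N$, where $D$ is diagonal with Novikov-unit entries and $N$ has entries of strictly positive valuation in $T$ and the $x_\ell$. Such matrices are invertible over the completed Novikov ring $R = \Lambda\langle x_1, x_2, x_3\rangle$ by a standard geometric-series argument, so $\{e_I^b\}_{I \subseteq \{1,2,3\}}$ forms a free $R$-basis of $V\otimes R$, which in particular verifies Assumption \ref{as1}. The hardest part will be the polygon bookkeeping needed for the leading-order identification: one must show, for each cyclic $(i,j,k)$, that the minimal triangle contributing to the $\bar{X}_k$-coefficient of $m_2^b(X_i, X_j)$ dominates in energy over all other contributions to that coefficient. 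This should reduce to the polygon enumerations for $\overline{L}$ carried out in \cite{CHL} for $\mathbb{P}^1_{3,3,3}$ and extend uniformly to the general orbisphere $\mathbb{P}^1_{a,b,c}$ by the same technique.
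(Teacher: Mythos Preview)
Your proposal is correct and takes essentially the same approach as the paper: the paper verifies the hypotheses of Lemma~\ref{lem:ge} (the undeformed products $e_I$ already generate $V$ by Seidel's computation of $m_2$, and $m_2^b-m_2$ has strictly positive valuation because every non-minimal polygon has area strictly larger than the minimal triangle), which is precisely the abstract form of your $D+N$ change-of-basis argument. The ``hardest part'' you anticipate is handled in one line in the paper by observing that the minimal triangle is the unique lowest-area contribution, so no further polygon enumeration is required.
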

\begin{proof}
We check the assumptions of Lemma \ref{lem:ge} in this case.
Recall that we consider an immersed Lagrangian $\bL$ (called Seidel Lagrangian) in the orbi-sphere $\PP^1_{a,b,c}$.
First, $m_2$ products for Seidel Lagrangian are computed in \cite{S}:
Recall that $CF(\bL,\bL)$ is generated by six immersed sectors $X_1, X_2, X_3$,  $\bar{X}_1, \bar{X}_2, \bar{X}_3$ as well as two Morse generators $e,p$. We have  $m_2(X_i,X_{i+1}) = T^c \bar{X}_{i+2}=-m_2(X_{i+1},X_{i})$
for the area $c$ of minimal triangle. Also, $m_2(X_i,\bar{X}_i) = p = -m_2(\bar{X}_i, X_i)$, and $e$ is the unit for
$m_2$ multiplication. Other $m_2$ products are shown to vanish by grading considerations (it is shown for $(5,5,5)$ but it is
straightforward to generalize it for general $(a,b,c)$).   Therefore $\{e_I\}$ generate $V$.
Note that only non-trivial holomorphic curve contribution comes from minimal triangle enclosed by Seidel Lagrangian.
It is immediate that any other holomorphic polygons has strictly larger area. Therefore,
we have $\nu( m_2^b(v,w) - m_2(v,w)) > \epsilon$ for any $v,w$ for some $\epsilon<c$. This proves the lemma. 
\end{proof}

Let us further assume that $W$ has isolated singularity at $0$. i.e. $(\partial_{x_{1}}W, \cdots, \partial_{x_n}W)$ defines a regular sequence and an associated Koszul complex
$(K^\bullet, s_0)$ from \eqref{eqK}.
\begin{prop}\label{prop:bjac}
Suppose an $\AI$-algebra $\CA$ satisfies the Assumption \ref{as1}.
The chain complex $(V\otimes R, m_1^b)$ is isomorphic to the Koszul complex  $(K^\bullet, s_0)$ for the regular sequence
$(\partial_{x_{1}}W, \cdots, \partial_{x_n}W)$. More precisely, 
a  chain map 
$$\Psi : (V\otimes_{\Lambda_0} R, m_1^b) \to (K^\bullet, s_0)$$
defined by 
$$ \Psi(e_I^b)= \theta_{i_1} \cdots \theta_{i_k}$$
gives an isomorphism.
Moreover $m_1^b$-cohomology of $\mathcal{B}$ with the product given by $m_2^b$, is isomorphic to $\mathrm{Jac}(W)$
as an algebra.

\end{prop}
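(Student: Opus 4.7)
I would verify that $\Psi(e_I^b) = \theta_{i_1}\cdots\theta_{i_k}$ intertwines $m_1^b$ with the Koszul differential $s_0$, show it is an $R$-module isomorphism, and then obtain the algebra isomorphism on cohomology. First, differentiating the weak Maurer-Cartan equation $m(e^b) = W(x)\cdot\mathbf{1}$ with respect to $x_j$ (using $\partial_{x_j}b = X_j$) yields
\[
m_1^b(X_j) \;=\; \partial_{x_j} W\cdot \mathbf{1}.
\]
Since $m_0^b = W\cdot\mathbf{1}$ is a multiple of the strict unit and $m_k^b(\ldots,\mathbf{1},\ldots)=0$ for $k\geq 3$, the $\AI$-relation coupling $m_1^b$ and $m_2^b$ reduces to a graded Leibniz rule for $m_1^b$ with respect to $m_2^b$ (all curvature contributions are killed by strict unitality). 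An induction on $|I|$, applying this Leibniz rule and the formula for $m_1^b(X_j)$ and contracting $\mathbf{1}$ via unitality, produces
\[
m_1^b(e_I^b) \;=\; \sum_{l=1}^{k} (-1)^{l-1}\,\partial_{x_{i_l}}W \cdot e_{I\setminus\{i_l\}}^b,
\]
which is exactly $s_0(\theta_{i_1}\cdots\theta_{i_k}) = \sum_j \partial_{x_j}W \cdot \partial_{\theta_j}(\theta_I)$.

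Next I would show $\Psi$ is an isomorphism of $R$-modules. After passing to the canonical model, the rank condition coming with Assumption \ref{as1} (namely that the cohomology algebra of $\CA$ has rank $2^n$) makes $V$ free of rank $2^n$ over $\Lambda_0$, hence $V\otimes R$ is free of rank $2^n$ over $R$. Assumption \ref{as1} presents the $2^n$ elements $\{e_I^b\}_{I\subset\{1,\ldots,n\}}$ as generators, so the obvious map $R^{\oplus 2^n}\twoheadrightarrow V\otimes R$ is a surjection between free modules of the same finite rank over a commutative Noetherian ring (and similarly for its appropriate Tate completion), and is therefore an isomorphism. Consequently $\{e_I^b\}$ is an $R$-basis, the assignment $\Psi(e_I^b)=\theta_I$ is well-defined and extends $R$-linearly to a degree-preserving isomorphism onto the $R$-basis $\{\theta_I\}$ of $K^\bullet$, and the identity from the previous paragraph makes it a chain map.

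Finally, the isolated-singularity hypothesis makes $(\partial_{x_1}W,\ldots,\partial_{x_n}W)$ a regular sequence, so $(K^\bullet,s_0)$ is a free resolution of $\mathrm{Jac}(W)$: cohomology is concentrated in top degree and equals $\mathrm{Jac}(W)$. Transporting through the chain isomorphism $\Psi$ yields $H^*(\CB, m_1^b)\cong\mathrm{Jac}(W)$ as $R$-modules, represented at chain level by classes $r\cdot\mathbf{1}$ with $r\in R$. Strict unitality gives $m_2^b(r\mathbf{1}, r'\mathbf{1}) = rr'\cdot\mathbf{1}$, and the sign convention $v\cdot w = (-1)^{|v|}m_2^b(v,w)$ is trivial in the even degree where the cohomology lives, so the induced product on $H^*(\CB)_{alg}$ matches polynomial multiplication in $R/(\partial W)=\mathrm{Jac}(W)$. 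The main technical point is the inductive identification of $m_1^b$ with the Koszul differential in the first paragraph: one must track the $\AI$ signs carefully and confirm that every curvature correction $m_{k}^b(\ldots,\mathbf{1},\ldots)$ ($k\geq 3$) drops out. Once that identity is in place, together with the rank condition supplied by Assumption \ref{as1}, the rest of the proof is formal.
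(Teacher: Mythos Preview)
Your proposal is correct and follows essentially the same route as the paper: differentiate the weak Maurer--Cartan equation to get $m_1^b(e_j)=\partial_{x_j}W\cdot\be$, use the Leibniz rule for $m_1^b$ against $m_2^b$ inductively to identify $m_1^b$ with the Koszul differential on the generators $e_I^b$, and then read off the algebra structure on cohomology from strict unitality in wedge degree zero. You are in fact more explicit than the paper on two points it leaves implicit---that the curvature terms $m_{k\geq 3}^b(\ldots,\be,\ldots)$ vanish by strict unitality so the Leibniz rule genuinely holds, and that a surjection $R^{2^n}\twoheadrightarrow V\otimes R$ onto a free module of the same rank is automatically an isomorphism---both of which are useful clarifications rather than departures.
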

\begin{remark}
If $W$ does not have isolated singularity, $\Psi$ defines an isomorphism to Koszul cohomology.
But this may not be an algebra isomorphism.
\end{remark}
\begin{proof}
By taking $\frac{\partial}{\partial x_i}$ of the equation $m(e^b) = W(b) \cdot \be$, we obtain
$$m_1^{b}(e_i) = \partial_{x_i} W \cdot \be$$
Recall that $m_2( \be, \star) = (-1)^{|\star|} m_2(\star,\be) = \star$. 
Then the claim follows from Leibniz rule for $m_1^b,m_2^b$.
For example, consider the identity
\begin{align*}
&m_1^b (e_{i_1 i_2 \cdots i_k}) \\
=& - m_2^b\big(m_1^b(e_{i_1}),m_2^b( e_{i_2}, m_2^b( \cdots, e_{i_k}) \cdots )\big)
-  m_2^b\big(e_{i_1}, m_1^b(m_2^b( e_{i_2}, m_2^b( \cdots, e_{i_k}) \cdots ))\big)
\end{align*}
where the first term equals $ (- \partial_{x_{i_1}} W ) m_2^b( e_{i_2}, m_2^b( \cdots, e_{i_k}) \cdots )$.
In this way, we can prove the claim inductively. 

For the products, we show that the induced product structure on cohomology are the same. 
Recall that the homology of the Koszul complex, $\mathrm{Jac}(W)$
is concentrated at the $0$-th wedge product, and the product structure of Jacobian ring is induced from that of $R$.
Correspondingly, $m_1^b$-cohomology comes from $R \cdot e_{\emptyset} = R \cdot \be$. 
Then $m_2^b$ is just given by the products of the coefficients in $R$. This proves the proposition.
\end{proof}
Thus, $\mathcal{B}$ provides a chain complex model for Jacobian ring of $W$. 
In Section  \ref{sec:KS}, we will define a Kodaira-Spencer  ring homomorphism from
quantum cohomology to $m_1^b$ cohomology of $\CB$.

\section{Kernel from MC formalism of Floer theory}\label{sec:le}
In this section, we will propose a new assumption, the Assumption \ref{as2}.  This identifies Floer theoretic construction of
matrix factorization and the kernel for matrix factorization category. Therefore we call the former Floer theoretic Kernel.
 We show that monotone Lagrangian torus as well as  Seidel Lagrangian in the orbisphere $\mathbb{P}^1_{a,b,c}$
satisfy the Assumption \ref{as2}.

Let us first construct Floer  Kernel. For an $\AI$-algebra $\mathcal{A} = (V,\{m_k\})$
we consider two set of Maurer-Cartan elements  which are the same except the naming of variables.  
$$b(x) = \sum_{i=1}^n x_i e_i, \;b(y) = \sum_{i=1}^n y_i e_i.$$
We denote by $R = \Lambda[x_1,\cdots,x_n], R'=\Lambda[y_1,\cdots,y_n]$ and 
$$R^e=R \hat{\otimes}_\Lambda R' = \Lambda[x_1,\cdots,x_n,y_1,\cdots,y_n]$$
 Let us also write $W(y) = W(b(y)), W(x) =W(b(x))$.
Then, we have degree one operation $m_1^{b(x),b(y)}$  (defined in \eqref{eq:dd}) on $\Z/2$-graded finitely generated $R^e$-module $V \otimes_{\Lambda_0}R^e$ satisfying $$\big(m_1^{b(x),b(y)}\big)^2 = (W(y) - W(x))\cdot \id.$$
Let us call this matrix factorization a Floer kernel  $\Delta_{FL}$.
\begin{assumption}\label{as2}
We assume that
$$\big(V \otimes_{\Lambda_0}R^e[n], m_1^{b(x),b(y)} \big) \;\; \cong \;\;
\big( R^e[\theta_1,\cdots,\theta_n], \sum_i (y_i - x_i) \partial_{\theta_i} +  \sum_i \nabla_i^{x \to (x,y)}W \cdot \theta_i\big).$$
Namely, Floer kernel $\Delta_{FL}$  is quasi-isomorphic to a Koszul matrix factorization $\Delta_W$ of $W(y) - W(x)$ (see \eqref{eq:ksmf}) up to shift of $\Z/2$ grading by $[n]$. 
\end{assumption}
\begin{remark}\label{rem:as2}
Here is an equivalent version without the shift $[n]$.
\begin{equation}\label{eq:mfkz}
\big(V \otimes_{\Lambda_0}R^e, m_1^{b(x),b(y)} \big) \;\; \cong \;\; \big( R^e[ \partial_{\theta_1},\cdots, \partial_{\theta_n}], \sum_i (y_i - x_i) \partial_{\theta_i} +  \sum_i \nabla_i^{x \to (x,y)}W \cdot \theta_i\big).
\end{equation}
We use this in Section \ref{sec:ag} for comparison of orbifold Jacobian algebra and Floer theory
\end{remark}

Let us give another conjectural explanation of the above assumption.
If a symplectic manifold $M$ is mirror to $W$,  one may expect that  the product symplectic manifold $M^- \times M$ is mirror to $-W(x)+W(y)$.
The diagonal Lagrangian $\Delta_M \subset M^- \times M$ gives identify functor on Fukaya category in the language of Lagrangian correspondence. Hence it is natural to expect that $\Delta_M$ and $\Delta_W$ should be mirror to each other
under homological mirror symmetry.

Given a localized mirror functor $\mathcal{F}^\bL:\mathrm{Fukaya}(M) \to MF(W)$, there should be a localized mirror functor for
the product
$\mathcal{F}^{\bL \times \bL}$  that send the diagonal $\Delta_M$ in Fukaya category to the diagonal $\Delta_W$ in matrix factorization
category. Now, let us explain how it is related to the Assumption \ref{as2}.
The functor $\mathcal{F}^{\bL \times \bL}$ sends $\Delta_M$ to
the following decorated Floer complex (which becomes a matrix factorization)
$$\mathcal{F}^{\bL \times \bL}(\Delta_M) = \big(CF(\Delta, \bL \times \bL)\otimes R^e, m_1^{0, b(x)\otimes 1 + 1\otimes b(y)}\big).$$
We conjecture that this is isomorphic to matrix factorization $\big(CF(\bL,\bL)\otimes R^e, -m_1^{b(x),b(y)}\big)$. We hope to
explore this in more detail elsewhere.

Therefore, we expect  this assumption \ref{as2} to hold in general but we are only able to check this for two set of examples in the rest of this section.

\subsection{Monotone Lagrangian torus satisfy Assumption \ref{as2}}
\begin{prop}
Fukaya $\AI$-algebra for a monotone Lagrangian torus satisfies Assumption \ref{as2}.
\end{prop}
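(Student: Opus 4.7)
Plan: I would work in the canonical (minimal) model of the Fukaya $A_\infty$-algebra on $V = H^*(T^n;\Lambda) = \Lambda\langle e_I\rangle_{I\subset\{1,\ldots,n\}}$, in which the classical $A_\infty$-structure reduces to the exterior algebra ($m_{2,0} = \wedge$ and $m_{k,0} = 0$ for $k\geq 3$). The underlying $R^e$-module $V \otimes_{\Lambda_0} R^e$ is then free of rank $2^n$, matching $\Delta_W$ through the Hodge-star identification $e_I \leftrightarrow \theta_{I^c}$, which accounts for the $[n]$ grading shift in Assumption \ref{as2}.

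The classical part of $m_1^{b(x),b(y)}$ is pinned down by the unit property, since $m_1^{b(x),b(y)}(\be) = m_2(b(x),\be) + m_2(\be, b(y)) = -b(x) + b(y) = \sum_i (y_i - x_i)e_i$, and the analogous wedge computation on higher $e_I$ shows (after sign-chasing) that the classical part equals the Koszul lowering differential $s_0 = \sum_i (y_i - x_i)\partial_{\theta_i}$ acting on the basis $\{\theta_{I^c}\}$. For the instanton part I would exploit the $A_\infty$-squaring identity $\bigl(m_1^{b(x),b(y)}\bigr)^{2} = (W(y) - W(x))\cdot \mathrm{id}$ applied to $\be$, which yields
\[
\sum_i (y_i - x_i)\, m_1^{b(x),b(y)}(e_i) \;=\; \sum_i (y_i - x_i)\, \nabla_i^{x\to(x,y)}W\cdot\be.
\]
Since $(y_1 - x_1, \ldots, y_n - x_n)$ is a regular sequence in $R^e$, this forces $m_1^{b(x),b(y)}(e_i) = \nabla_i^{x\to(x,y)}W\cdot\be$ modulo an antisymmetric Koszul syzygy, reproducing $s_1 = \sum_i \nabla_i^{x\to(x,y)}W\cdot\theta_i$ on $e_i$. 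Iterating this on $m_1^{b(x),b(y)}(e_I)$ for larger $I$ in tandem with the classical part already identified, one sees that $\Delta_{FL}$ is a Koszul-type matrix factorization for the regular sequence $(y_i - x_i)$ with companion coefficients $\nabla_i^{x\to(x,y)}W$.

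To conclude, I would verify that $\mathrm{Coker}(\Delta_{FL})$, viewed as an $R^e/(W(y)-W(x))$-module, is isomorphic to the diagonal MCM module $R^e/(y_1 - x_1, \ldots, y_n - x_n)$. Specializing $y=x$ reduces $\Delta_{FL}$ to $(V\otimes R, m_1^{b(x)})$, whose cohomology is $\Jac(W)$ by Proposition \ref{prop:bjac} (Assumption \ref{as1} for monotone tori is already established by the first lemma of Section \ref{sec:mc}); combined with a rank comparison, this identifies the cokernel. By Remark \ref{rmk:equivkernel} and the uniqueness of the MCM module representing a given matrix factorization, $\Delta_{FL} \cong \Delta_W$, establishing Assumption \ref{as2}.

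The main obstacle is the syzygy ambiguity at each step of the inductive computation in the second paragraph: the $A_\infty$-squaring only pins down $m_1^{b(x),b(y)}(e_I)$ modulo the image of the Koszul differential. The cleanest workaround is to combine the argument with the filtration by $T$-valuation, since at $T$-order zero only the classical $s_0$ contributes, so the $s_1$-part is determined order by order in $T$. Alternatively, one can sidestep the explicit description by invoking uniqueness: any rank-$2^n$ MCM module of $R^e/(W(y)-W(x))$ supported on the diagonal is determined up to isomorphism, forcing $\Delta_{FL} \cong \Delta_W$ directly.
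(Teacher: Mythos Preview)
Your setup in the canonical model and the identification of the classical part $m_{1,0}^{b(x),b(y)}$ with the wedge operation $\sum_i (y_i-x_i)\,e_i\wedge\cdot$ (hence with $s_0$ after the shift $[n]$) is correct and matches the paper. The gap is in how you handle the quantum corrections. The squaring identity only constrains $m_1^{b(x),b(y)}$ up to the syzygy ambiguity you name, and neither of your two workarounds actually closes it. For workaround (b), there is no general uniqueness theorem saying that a rank-$2^n$ matrix factorization of $W(y)-W(x)$ ``supported on the diagonal'' must be $\Delta_W$; you would need an independent computation of the cokernel. For the cokernel computation itself, specializing $y=x$ does \emph{not} compute $\mathrm{Coker}(\Delta_{FL})$: it produces the $2$-periodic complex $(V\otimes R,\,m_1^{b})$, whose cohomology $\Jac(W)$ is a different invariant from the MCM module $\mathrm{coker}(d_1:P^{odd}\to P^{even})$ over $R^e/(W(y)-W(x))$. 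Knowing the former does not pin down the latter, so the ``rank comparison'' step is a genuine hole.

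The paper's argument supplies exactly the missing mechanism, and it is really a sharpening of your workaround (a). In the monotone case one has the Maslov decomposition $m_k=\sum_{\mu\ge 0}T^{\mu}m_{k,\mu}$, and the crucial structural fact is that $m_{1,\mu}^{b(x),b(y)}$ shifts wedge degree by $1-\mu$. After the shift $[n]$ (pass to contraction), the $\mu=0$ piece is the Koszul differential for the regular sequence $(y_i-x_i)$. One then runs the Polishchuk--Vaintrob spectral sequence computing the cokernel from the wedge-degree bicomplex: the $E_2$-page is the Koszul homology of $(y_i-x_i)$, hence concentrated on the diagonal with $E_2^{0,0}=R^e/I$ and the other diagonal entries vanishing in the singularity category. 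Because $E_2$ is supported on a single diagonal, all higher differentials $d_{1,\mu}$ with $\mu\ge 4$ (which would have to move \emph{between} diagonal entries) act trivially, and the spectral sequence degenerates. This gives $\mathrm{Coker}(\Delta_{FL})\cong R^e/I$ directly, whence $\Delta_{FL}\cong\Delta_W$. The point you are missing is precisely this degree-shift bookkeeping: it is what converts the vague ``order by order in $T$'' idea into an actual vanishing statement for the higher corrections.
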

\begin{proof}
We prove this lemma following Theorem 9.1 \cite{CHL}.
Let us recall that $\AI$-algebra for monotone Lagrangian torus $L$ has the following structure.
First, without the holomorphic disc contribution, the classical cochain algebra is quasi-isomorphic to the exterior algebra
$\wedge^\bullet W$ for $$W = H^1(L,\Lambda_0) = \Lambda_0 \langle e_1,\cdots,e_n \rangle.$$  Therefore we can transfer its $\AI$-structure to $( \bigwedge \nolimits^\bullet W, \{m_k\})$  such that
 $$m_k =\sum_{\mu \in 2 \Z, \mu\geq 0} T^\mu m_{k,\mu}. $$
 $m_{1,0} = m_{k,0} =0$ for $k \neq 2$, and $(-1)^{|u_1|}m_{2,0}(u_1,u_2)$ for $u_1,u_2 \in \bigwedge\nolimits^\bullet W$ corresponds to
 the exterior algebra structure.
Here $m_{k,\mu}$ records the quantum contribution from Maslov index $\mu$ holomorphic discs, and monotone condition guarantees that $\mu >0$ for non-trivial holomorphic disc contributions.

This $\AI$-algebra is only $\Z/2$-graded, but in terms of the degree of exterior algebra, $m_{k,\mu}$ has degree $2-k - \mu$.
For example, we have $m_1 = m_{1,0} + T m_{1,2}  + T^2 m_{1,4} + \cdots$
where $$m_{1,\mu}: \bigwedge \nolimits^\bullet W \to \bigwedge \nolimits^{\bullet +1 - \mu} W.$$
Hence, wedge grading is an enhancement of $\Z/2$-grading.
Also, from weak Maurer-Cartan equation, the potential $W(b)$ comes from Maslov index two disc contribution (namely, $\{m_{k,2}\}$)

Let us assume $n$ is even so that the shift $[n]$ is trivial. The case of odd $n$ is similar and omitted.
We set
$$m_{1,\mu}^{b(x),b(y)}(w):= \sum m_{k+1+l, \mu} \big( \underbrace{ b(x), \cdots, b(x)}_k, w, \underbrace{b(y), \cdots, b(y)}_l \big).$$
Then, degree of $m_{1,\mu}^{b(x),b(y)}(w)$ is the same as $\deg(w) +1-\mu$, and we have
$$m_{1,0}^{b(x),b(y)}(e_I)= m_{2,0}(b(x),e_I) + m_{2,0}(e_I,b(y)) =  \sum_i (y_i - x_i) e_i \wedge e_I.$$
Therefore, $m_{1,0}^{b(x),b(y)}$ is given by wedge operation $\sum_i (y_i - x_i) e_i \wedge \cdot$.

Now, consider the shift $[n]$ in $\Z/2$-grading of  $\big(\wedge^\bullet W \otimes_{\Lambda_0}R^e[n], m_{1,0}^{b(x),b(y)} \big)$,
which can be realized by the suitable dual complex.
Namely, we consider the same complex
\begin{equation}\label{kn}
 \big(\wedge^\bullet W \otimes_{\Lambda_0}R^e, d_{1,0}^{b(x),b(y)} \big)
\end{equation}
but we replace wedge operation of $m_{1,0}^{b(x),b(y)}$ by contraction operation and denote it by
$$d_{1,0}^{b(x),b(y)} = \sum_i (y_i - x_i) \iota_{e_i}.$$
We denote by $d_{k,\mu}$ the operations corresponding to $m_{k,\mu}$.
In this way, \eqref{kn} defines a Koszul complex for the
regular sequence $(y_1-x_1,\cdots, y_n-x_n)$. Let $I$ be the ideal of $R^e$ generated by this regular sequence.

Recall that for a matrix factorization $(P^\bullet, \delta)$ of $W$, the corresponding sheaf (via Orlov equivalence) in the singularity category is given by the cokernel of $\delta:P^{odd} \to P^{even}$. Polishchuk-Vaintrob computes this cokernel of Koszul matrix factorization using a spectral sequence in Proposition 2.3.1 \cite{PV}. We will use this spectral sequence following \cite{CHL2} to prove our claim.
The bi-complex $L^{\bullet,\bullet}$ is concentrated on two diagonals $i+j=0$ and $i+j=-1$ given by 
$$L^{-i,i} = \bigwedge \nolimits^{2i} W, L^{-i,i-1} = \bigwedge \nolimits^{2i-1} W$$
with the differentials $$d_{1,\mu}^{b(x),b(y)}:  L^{-i,i-1} \to L^{-(i-1+\frac{\mu}{2}),i-1+\frac{\mu}{2}}.$$
 Consider the spectral sequence of this bi-complex coming from horizontal filtration. It is shown in \cite{PV} that this spectral sequence
 has an $E_2$-page (after taking homology with  $d_{1,0}^{b(x),b(y)}$) with $E_2^{0,0} = R^e/I$,  $E_2^{i,j} =0$ for $i\neq -j$, and $E_2^{-i,i}$ becomes zero in the singularity category for $i \neq 0$.
Hence, the cokernel is isomorphic to $R^e/I$.
Note that higher Maslov index contribution  $d_{1,\mu}^{b(x),b(y)}$ with $\mu \geq 4$ becomes trivial
since $E_2$-page is only non-trivial on the diagonal $ i+j=0$. Therefore our spectral sequence degenerates at $E_2$-page also and  the cokernel for the matrix factorization
$ \big(\wedge^\bullet W \otimes_{\Lambda_0}R^e[n], m_1^{b(x),b(y)} \big)$ is isomorphic to $R/I$ in the singularity category.
Hence it is isomorphic to the desired Koszul matrix factorization.
\end{proof}

In fact, we can strengthen Theorem 9.1 \cite{CHL} if we apply the similar argument as in  the above proof 
and we write the result for reader's convenience.

\begin{prop}
For a monotone Lagrangian torus $L$ with a potential function $W$, the localized mirror functor of $L$
sends $L$ to the Koszul matrix factorization of $W$ obtained by classical and Maslov index two contributions
\end{prop}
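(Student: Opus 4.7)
The plan is to transport the spectral-sequence argument of the preceding proposition into the single-bounding-cochain setting, where the localized mirror functor $\mathcal{F}^L$ sends $L$ to the matrix factorization $(CF(L,L) \otimes R, m_1^b)$ of $W$, with $b = \sum_i x_i e_i$. First I would decompose by Maslov index, $m_1^b = \sum_{\mu \geq 0} T^{\mu} m_{1,2\mu}^b$, and identify each piece on the canonical model $(\wedge^\bullet W, \{m_k\})$ used above. The classical piece $m_{1,0}^b$ vanishes: since $(-1)^{|u_1|} m_{2,0}(u_1,u_2)$ is the graded-commutative exterior product and $b$ is odd, $m_{2,0}(b,w) + m_{2,0}(w,b) = 0$. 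The Maslov-$2$ piece is pinned down by differentiating the weak Maurer-Cartan equation $\sum_k m_{k,2}(b,\ldots,b) = W \cdot \be$ with respect to $x_i$, yielding $m_{1,2}^b(e_i) = (\partial_i W)\cdot \be$.

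Combining this with the graded Leibniz rule for the $b$-deformed product $m_2^b$ used in Proposition \ref{prop:bjac}, I would extend the identification from the generators $e_i$ to the basis $\{e_I^b\}$ and show that on the ambient exterior-algebra module $\wedge^\bullet W \otimes R$, the Maslov-$2$ operator $T m_{1,2}^b$ realises the standard Koszul differential for the regular sequence $(\partial_1 W,\ldots,\partial_n W)$, together with the companion wedge-type operator needed to make the total operator a matrix factorization of $W$. This exhibits the classical + Maslov-$2$ truncation as the desired Koszul matrix factorization of $W$.

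Finally I would rerun the Polishchuk-Vaintrob bi-complex analysis of the preceding proof to show that the higher Maslov-index contributions $m_{1,\mu}^b$ with $\mu \geq 4$ do not change the isomorphism class. These send $\wedge^k W$ into $\wedge^{k+1-\mu} W$, shifting off the two diagonals that carry the $E_2$-page, so they die there; the spectral sequence degenerates at $E_2$, and the cokernel of $m_1^b$ coincides in the singularity category with that of the classical + Maslov-$2$ Koszul part, namely $R/(\partial_1 W,\ldots,\partial_n W)$. Since matrix factorizations of $W$ are classified up to isomorphism in $MF(W)$ by their cokernels as maximal Cohen-Macaulay modules over $R/(W)$, this yields the conclusion. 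The main obstacle is the identification step: pinning down $m_{1,2}^b$ on all of $\wedge^\bullet W$, not merely on the generators $e_i$, and verifying that it agrees on the nose with the Koszul differential built from $(\partial_1 W,\ldots,\partial_n W)$. Once this is in place, the spectral-sequence step is essentially the same computation performed in the preceding proposition.
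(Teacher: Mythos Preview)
There is a genuine error in your setup. The localized mirror functor relative to $(L,b)$ sends the undecorated Lagrangian $L$ to the matrix factorization $\big(CF(L,L)\otimes R,\,-m_1^{0,b}\big)$ of $W$, with the bounding cochain on \emph{one} side only. You instead work with $m_1^{b}=m_1^{b,b}$, and your explicit computations confirm this: you use $m_{2,0}(b,w)+m_{2,0}(w,b)=0$ for the classical piece, and you differentiate the full Maurer--Cartan equation to get $m_1^{b}(e_i)=\partial_iW\cdot\be$. But $(m_1^{b,b})^2=(W-W)\cdot\id=0$, so what you have written down is the Koszul \emph{complex} of Proposition~\ref{prop:bjac}, not a matrix factorization of $W$. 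Your sentence ``together with the companion wedge-type operator needed to make the total operator a matrix factorization of $W$'' is precisely where the argument fails: in $m_1^{b,b}$ there is no such companion, because the classical part vanishes and every $m_{1,\mu}^{b,b}$ with $\mu\geq 2$ lowers wedge degree.

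In the correct setup $m_1^{0,b}$, the roles are different from what you describe. The classical part $m_{1,0}^{0,b}(w)=m_{2,0}(w,b)$ is (up to sign) the wedge operation $\sum_i x_i\, e_i\wedge(\cdot)$, which is the \emph{raising} half of the Koszul matrix factorization; it does not vanish. The Maslov-$2$ part then provides the \emph{lowering} half. With this in hand, the spectral-sequence step is literally the preceding proof specialised to $b(x)=0$: one identifies the $E_2$-page from $m_{1,0}^{0,b}$ as supported on the single diagonal, so $m_{1,\mu}^{0,b}$ for $\mu\geq 4$ dies there and the cokernel in the singularity category is unchanged. Your final paragraph (the spectral-sequence analysis and the appeal to Eisenbud's classification by cokernels) is correct in spirit; it is the identification of the two halves of the Koszul factorization that needs to be redone with $m_1^{0,b}$ in place of $m_1^{b,b}$.
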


\subsection{Orbi-spheres $\mathbb{P}^1_{a,b,c}$ satisfy Assumption \ref{as2}}
The Seidel Lagrangian $\bL$ in the orbi-sphere $\PP^1_{a,b,c}$ 
with bounding cochain $b=x_1X_1+x_2 X_2+x_3 X_3$  for three  degree one immersed  sectors $X_1, X_2,X_3$
are shown to be  weakly unobstructed and localized mirror functor with reference $\bL$ provides homological
mirror symmetry in elliptic and hyperbolic cases \cite{CHL}. Recall that we equip $\bL$ a complex line bundle with holonomy $(-1)$
which is uniformly distributed (as in \cite{CHLnc}). We prove
\begin{prop}\label{prop:abckz}
Fukaya $\AI$-algebra for the Seidel Lagrangian $\bL$ in  $\PP^1_{a,b,c}$, satisfies Assumption \ref{as2}.
\end{prop}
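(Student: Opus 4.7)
The plan is to mimic the strategy of Proposition 5.2 (monotone torus case), but using the concrete Floer complex and disc counts known for Seidel's Lagrangian in place of the exterior-algebra model. The cochain space $CF(\bL,\bL)$ is free of rank $8=2^3$ over $\Lambda$, with basis $\{e,X_1,X_2,X_3,\bar X_1,\bar X_2,\bar X_3,p\}$, matching the rank of the Koszul module $R^{e}[\theta_1,\theta_2,\theta_3]$. So the first step is to fix a candidate $R^{e}$-linear identification
\[
e\leftrightarrow 1,\qquad X_i\leftrightarrow \theta_i,\qquad \bar X_i\leftrightarrow \theta_{j}\theta_{k}\ (\{i,j,k\}=\{1,2,3\}),\qquad p\leftrightarrow \theta_1\theta_2\theta_3,
\]
with signs pinned down by the minimal-triangle formula $m_2(X_i,X_{i+1})=T^{c}\bar X_{i+2}$ from Lemma \ref{lem:sa1}.

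Next I would split $m_1^{b(x),b(y)}$ into its classical part (contribution from constant discs, i.e.\ $m_{2,0}$) and its quantum part. Using the unit property $m_2(e,w)=w,\ m_2(w,e)=(-1)^{|w|}w$ together with the tabulated $m_{2,0}$'s of the six immersed sectors, a direct computation shows that the classical part of $m_1^{b(x),b(y)}$ on the basis above is precisely the Koszul differential $\sum_i(y_i-x_i)\partial_{\theta_i}$. For the quantum part, I would differentiate the joint weak Maurer--Cartan identity $m(e^{b(x)+b(y)})=W(b)\be$ (along with its $b(x)$- and $b(y)$-only specializations) with respect to the $x_i$'s and $y_i$'s, exactly as in the proof of Proposition \ref{prop:bjac}, to extract the coefficients of $\be,X_i,\bar X_i,p$ in terms of the difference quotients $\nabla_i^{x\to(x,y)}W$. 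The target is to show that the quantum part realizes the wedge operator $\sum_i \nabla_i^{x\to(x,y)}W\cdot\theta_i$ under the identification above, so that the total differential becomes the Koszul matrix factorization of $W(y)-W(x)$.

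To upgrade the resulting matrix factorization from an equality of formal expressions to an isomorphism with $\Delta_W$ in the homotopy category, I would reuse the Polishchuk--Vaintrob spectral-sequence argument of Proposition 5.2. Namely, filter the bicomplex by the power of the classical part $\sum_i(y_i-x_i)\partial_{\theta_i}$; the $E_1$-page is the ordinary Koszul complex for the regular sequence $(y_1-x_1,y_2-x_2,y_3-x_3)$, giving $E_2^{0,0}=R^{e}/I$ and $E_2^{-i,i}=0$ for $i\neq 0$ in the singularity category. Since $W(y)-W(x)\in I$, the spectral sequence forces the cokernel of $m_1^{b(x),b(y)}$ to agree with that of $\Delta_W$, which by Eisenbud's classification identifies the two matrix factorizations up to the shift $[n]=[3]$ of Assumption \ref{as2}.

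The main obstacle will be verifying the quantum part matches $\nabla_i^{x\to(x,y)}W\cdot\theta_i$ on the nose, because $m_1^{b(x),b(y)}$ on $X_i$ and $\bar X_i$ receives contributions from all $m_k$ with $k\ge 3$, encoded by the cubic and nonic disc counts that produce $W=-\phi(q)i(x_1^{3}+x_2^{3}+x_3^{3})+\psi(q)i\,x_1x_2x_3$. I expect to reduce the check to a single representative in each $\Z/3$-orbit by exploiting the cyclic symmetry of Seidel's Lagrangian and the equivariance of the Fukaya $\AI$-structure, leaving a finite verification whose coefficients are precisely the $\phi(q),\psi(q)$ that already appear in the expression for $W$.
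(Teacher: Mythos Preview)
Your plan has two genuine gaps. First, the claim that ``the classical part of $m_1^{b(x),b(y)}$ is precisely the Koszul differential $\sum_i(y_i-x_i)\partial_{\theta_i}$'' is not correct for the Seidel Lagrangian. The generators $X_i,\bar X_i$ are immersed sectors, and the only energy-zero contributions to $m_1^{b(x),b(y)}$ come from the unit identity (giving $e\mapsto\sum(y_i-x_i)X_i$) and the constant-disc pairing $X_i\otimes\bar X_i\mapsto p$ (giving $\bar X_i\mapsto(y_i-x_i)p$). The crucial third piece of the Koszul differential, namely $X_i\mapsto(y_k-x_k)\bar X_j$, comes from nonconstant polygons (the minimal triangle and all larger ones), so it is \emph{not} classical and its coefficient is not $(y_k-x_k)$ on the nose. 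Consequently there is no filtration in which the $E_1$-page is the Koszul complex for $(y_1-x_1,y_2-x_2,y_3-x_3)$, and the Polishchuk--Vaintrob spectral-sequence shortcut from the torus case does not transplant directly. Second, the expression $m(e^{b(x)+b(y)})=W(b)\,\be$ is not a valid identity: $b(x)+b(y)$ is not a weak bounding cochain, and $m_1^{b(x),b(y)}$ is not obtained by differentiating any such equation. So your proposed route to the ``quantum part $=\sum_i\nabla_iW\cdot\theta_i$'' does not get off the ground.

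The paper's argument is quite different and does not use a filtration at all. It exploits the \emph{reflection} symmetry along the equator of $\mathbb{P}^1_{a,b,c}$ (a $\Z/2$, not the cyclic $\Z/3$ you invoke) to prove the skew-symmetry $h_{ij}=-h_{ji}$ for the coefficients of $\bar X_j$ in $m_1^{b(x),b(y)}(X_i)$. Combining this with the $A_\infty$-relation $(m_1^{b(x),b(y)})^2=(W(y)-W(x))\cdot\mathrm{id}$, one finds that all the $h_{ij}$ share a common factor $\gamma^e\in R^e$ with $h_{ij}=\gamma^e(y_k-x_k)$. The essential step you are missing is the \emph{quantum change of basis} $\bar X_i^{new}:=\gamma^e\bar X_i$, $p^{new}:=\gamma^e p$: only after this rescaling does the matrix of $m_1^{b(x),b(y)}$ become exactly a Koszul matrix factorization for the pair $(y-x,\vec f)$, whence Assumption~\ref{as2} follows directly without any spectral sequence. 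Note also that the cyclic $\Z/3$ symmetry you plan to use is special to $(a,b,c)=(3,3,3)$ and is unavailable for general $\mathbb{P}^1_{a,b,c}$, whereas the reflection argument works uniformly.
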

\begin{proof}
We will show that  the matrix factorization $CF\big((\bL,b(x)),(\bL,b(y))\big)$ is a Koszul matrix factorization
for $(y - x,\vec{w})$ for some $\vec{w}= (w_1,\cdots,w_n)$ (see \eqref{eqK}).
As mentioned in the proof of Lemma \ref{lem:sa1},
$CF(\bL,\bL)$ has 8 generators and its $m_2$ product structure may be identified with an exterior algebra $\bigwedge^\bullet \langle X_1,X_2,X_3 \rangle$ by setting $T^c \bar{X}_{i+2} = X_{i} \wedge X_{i+1}$ for $i = 0,1,2 \mod 3$ and $T^c p = X_1 \wedge X_2 \wedge X_3$.
It turns out we need a  quantum correction to this identification to match with a Koszul matrix factorization.
\begin{prop}\label{prop:ge}
There exist $\gamma^e \in \Lambda[x_1,x_2,x_3,y_1,y_2,y_3]$ (to be defined in \eqref{def:ge}) such that
in terms of a basis 
\begin{equation}\label{eq:basis}
\{ p^{new},X_1,X_2,X_3,e,\bar{X_1}^{new},\bar{X_2}^{new},\bar{X_3}^{new}\},
\end{equation}
with $\bar{X_i}^{new}:=\gamma^e \bar{X_i}$ and $p^{new}:=\gamma^e p$,
the map $m_1^{b(x),b(y)}$ on $CF\big((\bL,b(x)),(\bL,b(y))\big)$
can be written as the following linear transformation
\begin{equation}\label{mfabc}
\left(\begin{array}{cccccccc}  
0& 0  &0   &0   & 0  & y_1-x_1  & y_2-x_2   & y_3-x_3  \\  
0&  0 &0   & 0  &  y_1-x_1 & 0  &  c_{12} & c_{13}  \\  
0&  0 & 0  &  0 &  y_2-x_2 & -c_{12}   & 0 & c_{23}  \\  
0&  0 &  0 &  0 &  y_3-x_3 & -c_{13}  & - c_{23}  & 0 \\  
0 &  f_1 & f_2  & f_3  & 0  & 0  & 0  & 0  \\  
f_1&  0 & y_3-x_3  & -(y_2-x_2)  &  0 &  0 &  0 & 0  \\  
f_2&  -(y_3-x_3) &  0 & y_1-x_1  &  0 &  0 & 0  &  0 \\  
f_3&   y_2-x_2& -(y_1-x_1)  &0   &  0 & 0  &  0 & 0  \end{array}\right).
\end{equation}
\end{prop}
\begin{remark}\label{rem:ga}
Recall that in \cite{CHL}, the matrix factorization of $W(y)$ that is mirror to $\bL$ was
shown to be Koszul. We also needed quantum change of variables for $CF\big(\bL ,(\bL,b(y))\big)$ with $\gamma$ 
playing the role of $\gamma^e$ here. In fact, we have $\gamma^e \mid_{x_1=x_2=x_3=0} = \gamma$.
\end{remark}
\begin{corollary}
We have $$c_{12} = f_3, c_{23}=f_1, c_{31}= f_2.$$
Therefore,  $CF\big((\bL,b(x)),(\bL,b(y))\big)$ defines a Koszul matrix factorization
for  $(y - x,\vec{w})$ with $w_i= f_i = c_{i+1,i+2}$ for $i=1,2,3 \mod 3$.
\end{corollary}
\begin{proof}
Let us first prove the corollary. Using the fact that \eqref{mfabc} defines a matrix factoriztion for $W(y) - W(x)$, we
obtain that
$$W(y) - W(x) = \sum_{i=1}^3 (y_i - x_i) f_i = \sum_{i=1}^3 (y_i - x_i) c_{i+1,i+2}$$
and 
$$- f_2 c_{12} + f_3 c_{31} =0, f_1 c_{12} - f_3 c_{23}=0, -f_1 c_{31} + f_2 c_{23}=0$$
\end{proof}
\begin{proof}
Let us prove Proposition \ref{prop:ge}.
 By unital property of $\AI$-algebra
$$ m_1^{b(x),b(y)}(e) = m_2( e, b(y)) +m_2 (b(x),e) =  b(y) - b(x)$$ and this explains the case of input $e$.
The map from $\bar{X_i}$ to $p$ has only constant disc (with Morse trajectory) contribution, and we omit the details.

For the rest of the proof, reflection symmetry along the equator of $\mathbb{P}^1_{a,b,c}$  plays the main role. Recall that $\bL$ is preserved by reflection and $e,p$ are
switched to each other.
The coefficients from $X_i$ to $e$ is the same as the coefficient from $p$ to $\bar{X_i}$ using reflection argument in (iv) of Theorem \cite{CHL}.
Furthermore, we have the following skew-symmetry from reflection.
\begin{lemma}
Let $c_{ij}$ be the coeffcient of $X_j$ in  $m_1^{b(x),b(y)}(\gamma^e \bar{X}_i)$.
We have $c_{ij} = - c_{ji}$.
\end{lemma}
\begin{proof}
We check the sign following Lemma 7.4 \cite{CHL}.
Let $P$ be a polygon contributing to the coeffcient $c_{ij}$, and then its reflection image $P^{op}$ contribute to
the coefficient $c_{ji}$ (see Figure \ref{fig:refsymmetry}).
\begin{figure}
\includegraphics[height=2in]{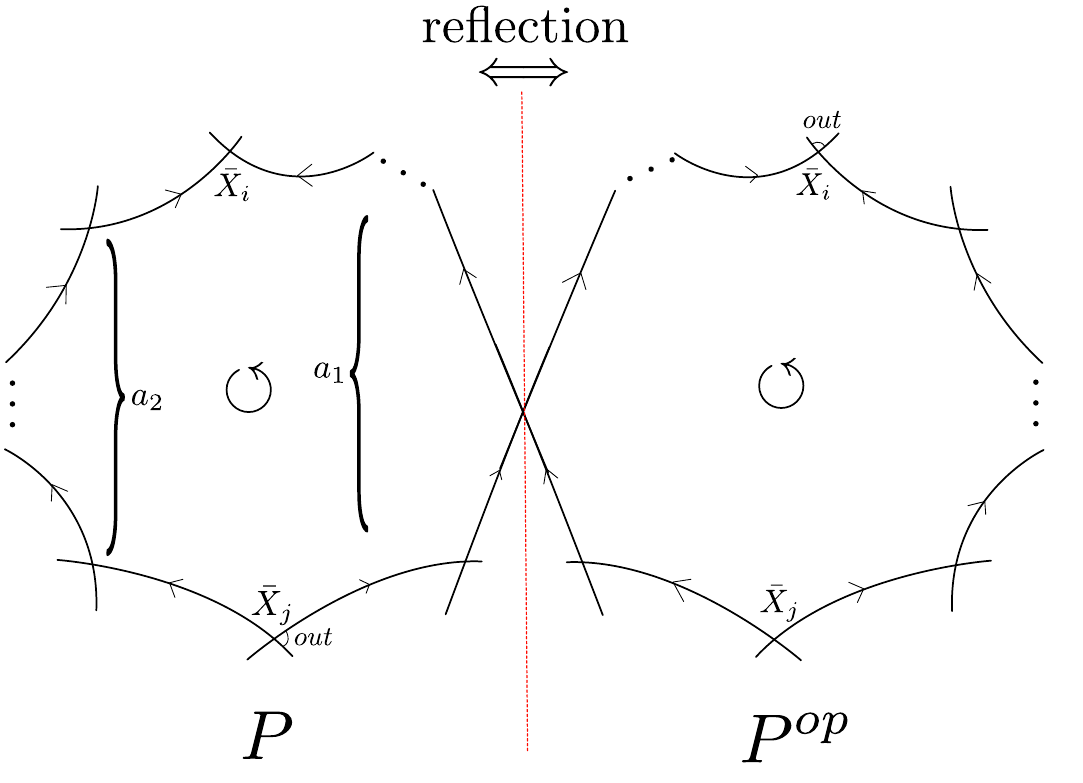}
\caption{Reflection symmetry of holomorphic polygons.}
\label{fig:refsymmetry}
\end{figure}
As observed in Lemma 7.4 \cite{CHL}, $\partial P$ and $\partial P^{op}$ evenly covers $\bL$, say $l$ times.
Since $\bL$ consists of 6 minimal edges, we may assume that $\partial P$ and $\partial P^{op}$ each covers $3l$ minimal edges.

For polygon $P$, denote by $a_1$ (resp. $a_2$) the number of corners that lies between the output corner to the input corner
when we walk along $\partial P$ counter-clockwise (resp. clockwise) way. 
From the combinatorial sign convention, it is easy to see that the sign difference of $\AI$-operation for $P$ and $P^{op}$ differ
by $(-1)^{a_1+a_2+1}$ :  If $\bL$ is oriented as in the Figure, then $P$ carries signs $(-1)^{a_2+1}$ and $P^{op}$ carries signs $(-1)^{a_1}$.
Extra $(-1)$ factor for $P$ comes from the output degree $|X_j|$.

One can also observe that for the holomorphic polygon $P$, the length of minimal edge between corners of same parity is odd, and the corners of different parity is even.
Therefore, we can see that the parity of the number of edges of $P$ is given by $(a_1-1)+(a_2-1) = a_1+a_2$.
Hence, $a_1+a_2 \equiv l$ modulo 2. 

The additional sign difference from the non-trivial spin structure of $\bL$ for $P$ and $P^{op}$ is given by
$(-1)^l = (-1)^{a_1+a_2}$. Combining these two contributions  $(-1)^{a_1+a_2+1} \times (-1)^{a_1+a_2}$, we obtain the result.
\end{proof}
The following lemma can be proved in a similar way and we omit the proof.
\begin{lemma}\label{lem:sqs}
Denote by $h_{ij} \in R^e$ the coefficient of $\bar{X}_j$ in  $m_1^{b(x),b(y)}(X_i)$ as $h_{ij}$.
Then, $h_{ij} = - h_{ji}$.
\end{lemma}

Now, let us explain the definition of $\gamma^e$. 
The coefficient of $p$ in $\big(m_1^{b(x),b(y)}\big)^2 (X_i) $ should vanish (from $\AI$-identity) for $i=1,2,3$ and this gives
\begin{eqnarray*}
(y_2 - x_2) h_{12} + (y_3 - x_3) h_{13} &=& 0, \\
(y_1 - x_1) h_{21} + (y_3 - x_3) h_{23} &=& 0, \\
(y_1 - x_1) h_{31} + (y_2 - x_2) h_{32} &=& 0. \\
\end{eqnarray*}

Using Lemma \ref{lem:sqs}, we have the following equality and we denote it by $\gamma^e \in R^e$.
\begin{equation}\label{def:ge}
 \frac{h_{12}}{y_3-x_3} = \frac{h_{23}}{y_1-x_1}  =\frac{h_{31}}{y_2-x_2} =: \gamma^e.
\end{equation}
Here $h_{12}$ is divisible by $y_3-x_3$ by the above $\AI$ equation.
Therefore, the coefficient of $\bar{X_2}^{new} (= \gamma^e\bar{X_2})$ in  $m_1^{b(x),b(y)}(X_1)$ is just $(y_3-x_3)$
and so on. This proves the lemma.

\end{proof}
Thus, identifying the basis \eqref{eq:basis} with the basis of exterior algebra, we
have shown that the matrix factorization $m_1^{b(x),b(y)}$ is 
a Koszul MF for $(y - x, \vec{f})$.
After shifting $[3]$ in $\Z/2$-grading, we obtain a Koszul MF for $( \vec{f}, y - x)$,
hence obtaining a Koszul resolution of $W(y) -W(x)$ for the diagonal $\Delta$.
\end{proof}

A priori, $\gamma^e \in R^e$ and $\gamma \in R$ are different. But for $\mathbb{P}^1_{3,3,3}$ they are the same.
\begin{lemma}
 For $\PP^1_{3,3,3}$,   $\gamma^e = \gamma$ is a scalar in $\Lambda$ given by a modular form given in \eqref{eq:g}.
\end{lemma}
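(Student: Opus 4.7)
The plan is to reduce the claim to showing $\gamma^e \in R^e$ is in fact independent of the variables $x_1, x_2, x_3, y_1, y_2, y_3$, hence a scalar in $\Lambda$. Once this independence is proved, Remark \ref{rem:ga} immediately gives $\gamma^e = \gamma^e|_{x_1=x_2=x_3 = 0} = \gamma$, and the identification of $\gamma$ with the modular form \eqref{eq:g} reduces to the already-known computation of $\gamma$ in the reference setting of \cite{CHL}.

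To prove the independence, I would work combinatorially with the polygons counted by $h_{12}$. A polygon $P$ contributing to $h_{12}$ has the distinguished input corner $X_1$, output corner $\bar X_2$, and intermediate corners coming from the expansion of $b(x) = \sum x_i X_i$ on one side of the input and $b(y) = \sum y_i X_i$ on the other. For $\mathbb{P}^1_{3,3,3}$ the Seidel Lagrangian $\bL$ lifts to the $1$-skeleton of the regular triangular tiling of its universal cover, and every holomorphic polygon bounded by $\bL$ decomposes into a fixed number of minimal triangles of this tiling; each such triangle contains exactly one corner of each of the three types $X_1, X_2, X_3$. This yields a balanced count: in a polygon made of $n$ minimal triangles, each corner type appears with multiplicity exactly $n$, one of them being the distinguished input (type $X_1$) or the output (type $X_2$).

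The key step is to set up a fixed-point-free involution on the set of contributing polygons that exchanges a single $X_3$ intermediate corner between the $b(x)$-side and the $b(y)$-side of the distinguished input. Polygons of the same $T$-weight pair up, and, using the skew-symmetry $c_{ij} = -c_{ji}$ established via reflection along the equator (c.f.\ Lemma \ref{lem:sqs}), the two contributions in each pair combine to a common factor $(y_3 - x_3)$. The balanced corner count then forces the multiplicities of the $X_1$- and $X_2$-corners to match symmetrically between the two sides, so the remaining monomial in the variables $x_1, x_2, y_1, y_2, x_3, y_3$ trivializes after dividing by $(y_3 - x_3)$. Summing over the pairs, $\gamma^e$ becomes a power series in $T$ only, i.e.\ an element of $\Lambda$.

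The main obstacle is verifying that the involution is genuinely sign-respecting: one must track the combinatorial $\AI$-sign convention for the corners, the extra sign contributed by the non-trivial spin structure on $\bL$, and the way the minimal triangles fit together around the distinguished input as corners are moved across it. Once this is arranged so that paired polygons contribute with opposite signs and identical $T$-weights, comparison of the resulting generating function with the classical computation of $\gamma$ in \cite{CHL} identifies $\gamma^e$ with the theta-type series $\sum_{k \in \Z} (-1)^k i\, q^{(6k+1)^2}$, completing the proof.
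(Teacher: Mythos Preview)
Your overall strategy---reduce to showing $\gamma^e$ is independent of the variables, then invoke Remark~\ref{rem:ga}---is exactly what the paper does. However, your mechanism for proving that independence is both more complicated than necessary and contains a gap.

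The paper's argument is a one-line degree observation: $h_{ij}$ is \emph{linear} in the variables $x_1,\dots,y_3$, hence $\gamma^e = h_{12}/(y_3-x_3)$ has degree $0$ and is a scalar. The linearity is immediate once you notice that for $\mathbb{P}^1_{3,3,3}$ the universal cover is flat, so every contributing holomorphic polygon lifts to an embedded region with convex corners of interior angle $\pi/3$; Gauss--Bonnet then forces exactly three corners total. With one corner the input $X_1$ and one the output $\bar X_2$, there is precisely one $b$-insertion, so $h_{12}$ is homogeneous of degree $1$. No involution or balanced-count argument is needed.

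Your ``balanced count'' claim is where the gap lies. You assert that in a polygon built from $n$ minimal triangles, each corner type appears with multiplicity $n$. This is true only if you count \emph{all} lattice vertices touched by the polygon, including interior ones and boundary vertices that are not corners; but only genuine corners (where the boundary turns) are inputs or outputs in the Floer count. For instance, a side-length-$3$ triangle is built from $9$ minimal triangles yet has only three corners, all of the same type (these are precisely the polygons producing the $x_i^3$ terms in $W$). So the balanced count does not constrain the Floer-relevant corners in the way you want. Your involution step then only re-derives the divisibility $h_{12} \in (y_3-x_3)$, which was already established in \eqref{def:ge} from the $A_\infty$-identity; the subsequent assertion that ``the remaining monomial trivializes'' relies on the faulty balanced count and is not justified.

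If you want to stay with a geometric argument rather than the bare degree count, the clean statement is: contributing polygons are triangles, and the $3$-coloring of the triangular lattice forces the three corner types of any triangle to be either all distinct or all equal; since types $1$ and $2$ are present, the third corner is type $3$. That gives $h_{12}$ linear in $x_3,y_3$ alone, and the rest follows.
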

\begin{proof}
One can check that $h_{ij}$ is linear, hence $\gamma^e$ is scalar. Therefore it equals $\gamma$ by the remark \ref{rem:ga}.
\end{proof}

\section{Equivariant construction and $\AI$-algebra $(\mathcal{B} \rtimes \WH{G})^{\WH{G}}$}\label{sec:eqb}
In this section, we give an equivariant construction of Section \ref{sec:mc}.
When a finite abelian group $G$ acts on an $\AI$-category, we  look at the quotient $\AI$-category and for a reference object $\OL{O}$,
we construct a new $\AI$-algebra $\mathcal{B}$ by studying Maurer-Cartan equation in the quotient.
Then we construct the theory for the original category by developing an equivariant construction for the dual group $\WH{G}$-action

Let $\mathcal{C}$ be a filtered unital  $\Z/2$-graded $\AI$-category over $\Lambda$ with a strict $G$-action. This means that $G$ acts on the set of objects $Ob(\mathcal{C})$
and morphisms such that 
\[g:hom_{\mathcal{C}}(O_1,O_2) \to hom_{\mathcal{C}}(gO_1,gO_2),\] and for composable morphisms $w_1,\cdots,w_k$,
we have
$$m_k(g w_1,\cdots,g w_k) = g m_k(w_1,\cdots,w_k).$$

We introduce the following notation.
\begin{definition}
Define $\WT{O} := \bigoplus_{g \in G} gO$ for any object $O$ of $\mathcal{C}$, and
$$hom_{\CC} (\WT{O}, \WT{O}) :=  \bigoplus_{g_1,g_2 \in G} hom_{\mathcal{C}}(g_1O, g_2O).$$
This has an induced $\AI$-structure $\{m_k\}$ where $m_k$ is defined to be $0$ if not composable.
It has the strict diagonal $G$-action.
\end{definition}
One can define a (quotient) $\AI$-cateory $\mathcal{C}^G$ as follows.
Let us discuss this for the case of single object for simplicity and 
let $O \in Ob(\mathcal{C})$ be an object with $g_1 O \neq g_2O$ for $g_1 \neq g_2$.
\begin{definition}
We define an $\AI$-algebra $\mathcal{A}$ (for the quotient object $\OL{O}$)  on
$$hom_{\mathcal{C}^G}(\OL{O}, \OL{O}) := \bigoplus_{g \in G} hom_{\mathcal{C}}(O,gO).$$
Let us denote an element $v \in hom_{\mathcal{C}}(O,gO)$ as $v_g$ to keep track of the indices. An $\AI$-structure on $hom_{\mathcal{C}^G}(\OL{O}, \OL{O})$ is defined as
\begin{equation}
m_k( (w_1)_{g_1}, \cdots, (w_k)_{g_k})
= m_k(w_1,g_1\cdot(w_2), (g_1g_2) \cdot (w_3), \cdots, (g_1,\cdots,g_{k-1}) \cdot w_k)
\end{equation}
\end{definition}
One may check that the above $m_k$ operation is composable, and satisfies $\AI$-equations.

Let $\WH{G}=Hom(G,U(1))$ be the character group of the finite abelian group $G$.
We define $\WH{G}$-action on quotient morphism spaces. 
\begin{definition}\label{def:hatac0}
We  define $\WH{G}$-action on $hom_{\mathcal{C}^G}(\OL{O}, \OL{O})$ by
\begin{equation}\label{def:cocycleaction}\chi (v_g) = \chi(g^{-1}) v_g, \;\; v_g \in hom_{\mathcal{C}}(O,gO).\end{equation}
\end{definition}
\begin{remark}
In \cite{S1}, action was defined to be $\chi(g)v_g$. Our convention has the advantage that 
the $\AI$-isomorphism in  Lemma \ref{lem:qq} preserves the eigen-spaces of $G$-action.
\end{remark}

Let us denote the action by $\rho(\chi)$. We may call this the first $\WH{G}$-action, which  we use to define the following semi-direct product. Later, the second $\WH{G}$-action will be defined in Definition \ref{def:hatac}.

\begin{definition}\cite{S1}
A semi-direct product $\AI$-algebra $$hom_{\mathcal{C}^G}(\OL{O}, \OL{O}) \rtimes \WH{G}$$
is an $\AI$-structure defined on 
$hom_{\mathcal{C}^G}(\OL{O}, \OL{O}) \otimes \Lambda[\WH{G}]$ with its $\AI$-operation is defined as 
\begin{align}
\begin{split}
 &m_k(w_1 \otimes \chi_1, \cdots, w_k \otimes \chi_k)\label{eq:interioraction}\\
 :=&m_k\big(\rho(\chi_2 \cdots \chi_{k})(w_1), \rho(\chi_3 \cdots \chi_{k})(w_2), \cdots, \rho(\chi_k)(w_{k-1}), w_k\big) \otimes \chi_1 \cdots \chi_k.
 \end{split}
\end{align}
\end{definition}
\begin{remark}
This convention makes more sense  after reversing the order of inputs of $m_k$.
This is analogous to the setup in Definition \ref{def:mfa} that to make dg-algebra into an $\AI$-algebra, we take the opposite hom spaces.
\end{remark}
Seidel observed the following isomorphism. For later use, we describe an explicit isomorphism as follows
(taking a sum over $G$-orbit twisted by a character $\chi$).
\begin{lemma}\label{lem:qq}
We have an isomorphism of two $\AI$-algebras
given by $\Phi$. We set $\Phi_{k \geq 2} =0$ and define
$$\Phi_1: hom_{\mathcal{C}^G}(\OL{O}, \OL{O}) \rtimes \WH{G} \to  hom_{\CC}(\WT{O},\WT{O})$$
$$\Phi_1: v \otimes \chi \mapsto \sum_{g \in G} \chi(g^{-1}) (g\cdot v).$$
Furthermore, $\Phi_1$ is $G$-equivariant map where
 $G$-action on the domain of $\Phi_1$ is defined by 
\[ g \cdot (v \otimes \chi) := \chi(g) (v \otimes \chi).\]
\end{lemma}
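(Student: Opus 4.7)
The plan is to verify three separate claims: that $\Phi = (\Phi_1, 0, 0, \ldots)$ satisfies the $\AI$-morphism equations, that $\Phi_1$ is bijective, and that $\Phi_1$ is $G$-equivariant for the stated actions.

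For the $\AI$-morphism relations, since all higher components of $\Phi$ vanish, the equations collapse to the strict identity $\Phi_1 \circ m_k = m_k \circ (\Phi_1 \otimes \cdots \otimes \Phi_1)$ for every $k \geq 1$. I would evaluate both sides on pure tensors $v_i \otimes \chi_i$ with $v_i \in hom_{\CC}(O, g_i O)$. Unwinding the semi-direct product operation \eqref{eq:interioraction} on the left produces the scalar $\prod_{i=1}^{k-1}(\chi_{i+1}\cdots \chi_k)(g_i^{-1})$ multiplying $m_k(v_1, g_1 v_2, \ldots, g_1 \cdots g_{k-1} v_k)$, and $\Phi_1$ then sums $G$-translates of this element with weights $(\chi_1 \cdots \chi_k)(h^{-1})$. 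On the right, composability in $hom_{\CC}(\WT{O}, \WT{O})$ forces the independent summation indices $h_1, \ldots, h_k$ coming from each $\Phi_1(v_i \otimes \chi_i)$ to satisfy $h_{i+1} = h_1 g_1 \cdots g_i$ with $h_1$ free; strict $G$-equivariance of the ambient $\{m_k\}$ in $\CC$ then pulls the common $h_1$ outside, yielding the same expression once one observes the combinatorial identity $\prod_{j=2}^{k} \chi_j((g_1 \cdots g_{j-1})^{-1}) = \prod_{i=1}^{k-1}(\chi_{i+1} \cdots \chi_k)(g_i^{-1})$.

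For bijectivity, I would decompose by the $g$-summand. Under the $G$-equivariant isomorphism $\bigoplus_{h \in G} hom_{\CC}(hO, hgO) \cong hom_{\CC}(O, gO) \otimes \Lambda[G]$ given by $hv \leftrightarrow v \otimes h$, the restriction of $\Phi_1$ to $hom_{\CC}(O, gO) \otimes \Lambda[\WH{G}]$ becomes the identity on $hom_{\CC}(O, gO)$ tensored with the Fourier-type map $\Lambda[\WH{G}] \to \Lambda[G]$ sending $\chi \mapsto \sum_h \chi(h^{-1}) h$. For a finite abelian group $G$ and an algebraically closed field $\Lambda$ of characteristic zero, this map is a classical isomorphism of group algebras. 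The $G$-equivariance of $\Phi_1$ is a one-line check: substituting $h' = gh$ in $g \cdot \Phi_1(v \otimes \chi) = \sum_h \chi(h^{-1})(gh) v$ extracts a factor $\chi(g)$ and returns $\chi(g)\Phi_1(v \otimes \chi)$, which equals $\Phi_1(g \cdot (v \otimes \chi))$ by the definition of the action on the domain.

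The main obstacle is the first step: the bookkeeping of character values, group elements, and the composability pattern must be aligned exactly, and the signs of exponents in \eqref{def:cocycleaction} (namely $\chi(g^{-1})$ rather than $\chi(g)$) are precisely what is forced by the matching. Conceptually, the ``inner'' character weights produced by composability of the $G$-orbit sum on the target side must coincide with the ``outer'' character weights built into the semi-direct product operation, and this coincidence is nothing but the Fubini-type rearrangement of the double product of characters displayed above; once this identity is isolated, the rest is routine.
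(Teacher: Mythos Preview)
Your proposal is correct and follows essentially the same approach as the paper: both verify the strict $\AI$-morphism identity by tracking composability constraints and the character identity $\prod_j \chi_j((g_1\cdots g_{j-1})^{-1}) = \prod_i (\chi_{i+1}\cdots\chi_k)(g_i^{-1})$, and both prove $G$-equivariance by the same substitution. For bijectivity you invoke the Fourier-type isomorphism $\Lambda[\WH{G}] \cong \Lambda[G]$, whereas the paper writes down the explicit inverse $gv \mapsto \frac{1}{|\WH{G}|}\sum_\chi \chi(g)\, v\otimes\chi$ and checks it via orthogonality relations; these are the same content packaged differently.
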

\begin{proof}
$G$-equivariance is due to the following:
\begin{align*}
\Phi_1(h\cdot(v\otimes \chi))=\Phi_1(\chi(h)v\otimes \chi)=\sum_{g\in G}\chi(g^{-1}h)gv=h\cdot \sum_{g\in G}\chi(g^{-1}h)h^{-1}gv=h\cdot \Phi_1(v\otimes \chi).
\end{align*}
 To prove that it is an $\AI$-morphism, consider the following projection map for $g\in G$.
\[\pi_g: \bigoplus_{g_1,g_2 \in G} hom_{\mathcal{C}}(g_1O, g_2O) \to \bigoplus_{h\in G} hom_{\mathcal{C}}(gO,ghO).\]
Then for $v_i \in hom_{\mathcal{C}}({O},g_i {O})$ and for $g\in G$,
\begin{align*}
 &\pi_g\circ \Big(m_k\big(\Phi_1(v_1\otimes \chi_1),\cdots,\Phi_1(v_k\otimes \chi_k)\big)\Big)\\
 =& m_k\big(\chi_1(g^{-1})gv_1,\chi_2((gg_1)^{-1})gg_1v_2,\cdots,\chi_k((gg_1\cdots g_{k-1})^{-1})gg_1\cdots g_{k-1} v_k\big) \\
 =&( \chi_1\cdots\chi_k)(g^{-1})\cdot (\chi_2\cdots\chi_k)(g_1^{-1})\cdot (\chi_3\cdots\chi_k)(g_2^{-1})\cdots \chi_k(g_{k-1}^{-1})m_k(gv_1,gg_1v_2,\cdots,gg_1\cdots g_{k-1}v_k) \\
 =&( \chi_1\cdots\chi_k)(g^{-1})\cdot (\chi_2\cdots\chi_k)(g_1^{-1})\cdot (\chi_3\cdots\chi_k)(g_2^{-1})\cdots \chi_k(g_{k-1}^{-1})g\cdot m_k(v_1,g_1v_2,\cdots,g_1\cdots g_{k-1}v_k)\\
 =& ( \chi_1\cdots\chi_k)(g^{-1})\cdot g\cdot m_k\big( (\chi_2\cdots\chi_k)(g_1^{-1})v_1,\cdots,\chi_k(g_{k-1}^{-1})v_{k-1},v_k\big) \\
 =& ( \chi_1\cdots\chi_k)(g^{-1})\cdot g\cdot m_k(\rho(\chi_2\cdots\chi_k)(v_1),\cdots,\rho(\chi_k)(v_{k-1}),v_k)\\
 =& \pi_g\circ \big(\Phi_1( m_k(v_1\otimes \chi_1,\cdots,v_k\otimes \chi_k))\big). 
\end{align*}
The map is clearly injective. For $gv \in hom_{\mathcal{C}}(gO,ghO)$, we observe that
\begin{align*}
 \Phi_1\Big(\sum_{\chi\in \HG}\frac{\chi(g)v\otimes \chi}{|\HG|}\Big)
 = \sum_{\chi\in \HG}\sum_{g'\in G}\frac{\chi(gg'^{-1})g'v}{|\HG|}.
 \end{align*}
When $g'=g$, the summand is $gv$. If $g'\neq g$, then the summand is zero due to
 \[ \sum_{\chi\in\HG} \chi(gg'^{-1})=0.\]
 Hence, $\Phi_1$ is also surjective.
\end{proof}
Since $G$ is finite abelian,  $hom_{\CC}(\WT{O},\WT{O})$ has eigenspace decomposition for the $G$-action.
\begin{corollary}
$\Phi_1$ sends  $hom_{\mathcal{C}^G}(\OL{O}, \OL{O})  \otimes \chi$ to a $\chi$-eigenspace 
of  $hom_{\CC}(\WT{O},\WT{O})$.
\end{corollary}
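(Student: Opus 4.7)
My plan is to deduce this statement as a short corollary of the $G$-equivariance established in Lemma \ref{lem:qq}. The key observation is that, under the $G$-action on $hom_{\mathcal{C}^G}(\OL{O},\OL{O}) \rtimes \WH{G}$ specified there, namely
\[ g \cdot (v \otimes \chi) = \chi(g)(v \otimes \chi), \]
each summand $hom_{\mathcal{C}^G}(\OL{O},\OL{O}) \otimes \chi$ is by construction contained in the $\chi$-isotypic component for this $G$-action. Since $G$ is finite abelian, both source and target decompose as direct sums of $\chi$-isotypic components indexed by $\WH{G}$, and any $G$-equivariant linear map must send the $\chi$-piece of the source into the $\chi$-piece of the target. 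Applying this to the $G$-equivariant map $\Phi_1$ gives the claim with essentially no further work.

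For completeness I would also verify the conclusion directly from the explicit formula $\Phi_1(v \otimes \chi) = \sum_{g \in G} \chi(g^{-1})(g \cdot v)$. Applying an element $h \in G$ and reindexing the sum by $g \mapsto hg$ produces an overall factor of $\chi(h)$, confirming that $\Phi_1(v \otimes \chi)$ lies in the $\chi$-eigenspace of $hom_\CC(\WT{O},\WT{O})$ under the induced $G$-action. Since the argument reduces entirely to the equivariance already established in Lemma \ref{lem:qq}, I do not anticipate any real obstacle; the only thing to watch is keeping the inverse in $\chi(g^{-1})$ consistent with the convention \eqref{def:cocycleaction}, which is precisely what the reindexing check confirms.
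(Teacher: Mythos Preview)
Your proposal is correct and takes essentially the same approach as the paper: the corollary is stated immediately after Lemma \ref{lem:qq} with no separate proof, so it is meant to follow directly from the $G$-equivariance of $\Phi_1$ established there, exactly as you argue. Your additional direct verification via reindexing is a nice sanity check but not needed.
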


\subsection{Bounding cochains}
We consider Maurer-Cartan theory for the semi-direct product, and the corresponding deformation of $\AI$-structure.
The following observation is easy  but important for further development. It also appeared in Sheridan's work \cite{Sh3}.
\begin{lemma}
If $b$ satisfies weak Mauer-Cartan equation with potential $W$ for the $\AI$-algebra $hom_{\mathcal{C}^G}(\OL{O}, \OL{O})$, then
so does $b \otimes 1$  for the semi-direct product
$hom_{\mathcal{C}^G}(\OL{O}, \OL{O}) \rtimes \WH{G}$.
\end{lemma}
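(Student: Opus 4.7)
The proof is essentially a direct unwinding of the semi-direct product formula. The plan is to observe that the trivial character $1 \in \WH{G}$ acts on everything by $\rho(1) = \mathrm{id}$, so the semi-direct product operations on strings of the form $(b \otimes 1, \ldots, b \otimes 1)$ degenerate to the original operations tensored with $1$.

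More concretely, I would first substitute $w_i = b$ and $\chi_i = 1$ for all $i$ into the definition
\[
m_k(w_1 \otimes \chi_1, \ldots, w_k \otimes \chi_k) = m_k\big(\rho(\chi_2 \cdots \chi_k)(w_1), \ldots, \rho(\chi_k)(w_{k-1}), w_k\big) \otimes \chi_1 \cdots \chi_k.
\]
Since every product $\chi_i \cdots \chi_k$ equals the trivial character $1$, each instance of $\rho$ becomes the identity, and the formula collapses to
\[
m_k(b \otimes 1, \ldots, b \otimes 1) = m_k(b, \ldots, b) \otimes 1.
\]

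Summing over $k \geq 0$ (with the appropriate $T$-adic convergence coming from $b \in F_+ V$) and using the hypothesis that $b$ satisfies the weak Maurer-Cartan equation in $hom_{\mathcal{C}^G}(\OL{O}, \OL{O})$, one gets
\[
\sum_{k \geq 0} m_k(b \otimes 1, \ldots, b \otimes 1) = \Big(\sum_{k \geq 0} m_k(b, \ldots, b)\Big) \otimes 1 = W(b) \cdot \be \otimes 1.
\]
The final step is to identify $\be \otimes 1$ as the strict unit of the semi-direct product $\AI$-algebra: this again follows from the same formula, since inserting $\be \otimes 1$ in any slot produces a trivial character factor that leaves the other $\rho$'s unchanged, and $\be$ itself is the strict unit for the original $m_k$. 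Hence $b \otimes 1$ is a weak MC element with the same potential $W(b)$.

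There is no real obstacle here; the content is entirely bookkeeping, and the only place one must be slightly careful is to confirm that the convention \eqref{eq:interioraction} really does give $\rho(1) = \mathrm{id}$ on every entry when all $\chi_i = 1$, which is immediate from Definition \ref{def:hatac0}.
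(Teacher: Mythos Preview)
Your proof is correct and is exactly the natural argument the paper has in mind; the paper itself states the lemma as an ``easy but important'' observation without giving any proof, so your direct unwinding of the semi-direct product formula \eqref{eq:interioraction} with all $\chi_i = 1$ is precisely what is intended.
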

In particular, we can define deformed $\AI$-maps $\{m^{b\otimes 1}\}$ on $hom_{\mathcal{C}^G}(\OL{O}, \OL{O}) \rtimes \WH{G}$.
Note that $m_1^{b\otimes 1}$ preserves $\chi$-eigenspace  $hom_{\mathcal{C}^G}(\OL{O}, \OL{O}) \otimes \chi$ for any $\chi$.
Namely, for $a_1 \otimes \chi$, 
\begin{eqnarray*}
m_1^{b \otimes 1} (a_1 \otimes \chi) &=& \sum_{k=0}^\infty m_{k+1}(b\otimes 1,\cdots, b\otimes 1, a_1\otimes,b\otimes 1,\cdots b\otimes 1) \\
&=& \sum_{k=0}^\infty m_{k+1}(\rho(\chi)(b),\cdots, \rho(\chi)(b), a_1,b,\cdots,b) \otimes \chi 
\end{eqnarray*}
Here $\rho(\chi)(b)$ is a $\chi$ action defined in \eqref{def:cocycleaction}, and therefore acts only on $X_i$'s.
We pretend that $\chi^{-1}$ acts on variables $x_i$'s instead and not on $X_i$'s and make the following definition.
\begin{definition}
For $b=\sum_i x_i X_i$, we set
$$b(\chi^{-1}) := \sum_i \chi^{-1}(x_i) X_i$$
We have $b(\chi^{-1})= \sum_i x_i \chi(X_i)=\rho(\chi)b$.
\end{definition}
Therefore, $m_1^{b \otimes 1} (a_1 \otimes \chi)$ for Floer theory uses contributions of $J$-holomorphic discs with the following inputs
and write the output on the $\chi$-sector.
\begin{figure}[h!]\centering
\begingroup%
  \makeatletter%
  \providecommand\color[2][]{%
    \errmessage{(Inkscape) Color is used for the text in Inkscape, but the package 'color.sty' is not loaded}%
    \renewcommand\color[2][]{}%
  }%
  \providecommand\transparent[1]{%
    \errmessage{(Inkscape) Transparency is used (non-zero) for the text in Inkscape, but the package 'transparent.sty' is not loaded}%
    \renewcommand\transparent[1]{}%
  }%
  \providecommand\rotatebox[2]{#2}%
  \newcommand*\fsize{\dimexpr\f@size pt\relax}%
  \newcommand*\lineheight[1]{\fontsize{\fsize}{#1\fsize}\selectfont}%
  \ifx\svgwidth\undefined%
    \setlength{\unitlength}{170.45109221bp}%
    \ifx\svgscale\undefined%
      \relax%
    \else%
      \setlength{\unitlength}{\unitlength * \real{\svgscale}}%
    \fi%
  \else%
    \setlength{\unitlength}{\svgwidth}%
  \fi%
  \global\let\svgwidth\undefined%
  \global\let\svgscale\undefined%
  \makeatother%
  \begin{picture}(1,0.52495334)%
    \lineheight{1}%
    \setlength\tabcolsep{0pt}%
    \put(0,0){\includegraphics[width=\unitlength,page=1]{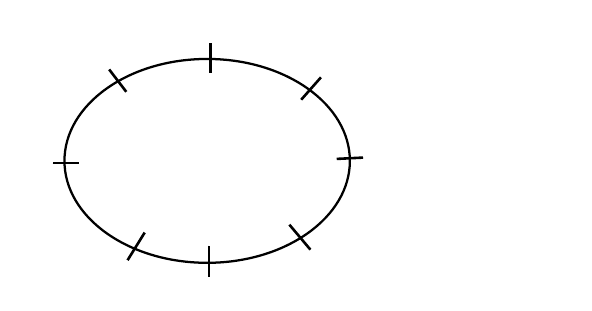}}%
    \put(-0.00444018,0.23423794){\makebox(0,0)[lt]{\lineheight{1.25}\smash{\begin{tabular}[t]{l}$a_1$\\\end{tabular}}}}%
    \put(0.62893833,0.24090514){\makebox(0,0)[lt]{\lineheight{1.25}\smash{\begin{tabular}[t]{l}out\\\end{tabular}}}}%
    \put(0.17112793,0.03644607){\makebox(0,0)[lt]{\lineheight{1.25}\smash{\begin{tabular}[t]{l}b\end{tabular}}}}%
    \put(0.33558407,0.00088804){\makebox(0,0)[lt]{\lineheight{1.25}\smash{\begin{tabular}[t]{l}b\end{tabular}}}}%
    \put(0.53337601,0.05644763){\makebox(0,0)[lt]{\lineheight{1.25}\smash{\begin{tabular}[t]{l}b\end{tabular}}}}%
    \put(0.07112079,0.43869704){\makebox(0,0)[lt]{\lineheight{1.25}\smash{\begin{tabular}[t]{l}$b(\chi^{-1})$\end{tabular}}}}%
    \put(0.28454974,0.47980669){\makebox(0,0)[lt]{\lineheight{1.25}\smash{\begin{tabular}[t]{l}$b(\chi^{-1})$\end{tabular}}}}%
    \put(0.54234589,0.41535772){\makebox(0,0)[lt]{\lineheight{1.25}\smash{\begin{tabular}[t]{l}$b(\chi^{-1})$\end{tabular}}}}%
  \end{picture}%
\endgroup%

\caption{Geometric description of $m_1^{b(\chi^{-1}),b}(a_1\otimes \chi)$}
\label{chi}
\end{figure}
\begin{lemma} \label{lem:twisteddiff}
We have an isomorphism sending $w\otimes \chi \to w$.
$$\big(hom_{\mathcal{C}^G}(\OL{O}, \OL{O}) \otimes \chi,  m_1^{b\otimes 1} \big) \cong \big(hom_{\mathcal{C}^G}(\OL{O}, \OL{O}), m_1^{b(\chi^{-1}),b} \big).$$
\end{lemma}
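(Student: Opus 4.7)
The plan is to unfold the definition \eqref{eq:interioraction} of the semidirect product $\AI$-structure applied to the specific input pattern that computes $m_1^{b\otimes 1}$. Since the isomorphism on underlying modules $w\otimes\chi \mapsto w$ is tautological, the only thing to verify is that the differentials agree, and this should follow from a direct bookkeeping of characters.

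First, I would expand
\[
m_1^{b\otimes 1}(a_1\otimes \chi) = \sum_{l_1,l_2\geq 0} m_{l_1+1+l_2}\bigl(\underbrace{b\otimes 1,\ldots,b\otimes 1}_{l_1},\, a_1\otimes \chi,\, \underbrace{b\otimes 1,\ldots,b\otimes 1}_{l_2}\bigr).
\]
Then I would apply \eqref{eq:interioraction} with characters $\chi_i = 1$ for $i\neq l_1+1$ and $\chi_{l_1+1}=\chi$. According to that formula, the input at position $i$ is acted on by $\rho(\chi_{i+1}\cdots\chi_{l_1+1+l_2})$, which equals $\rho(\chi)$ for $i\leq l_1$ and equals the identity for $i\geq l_1+1$. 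Hence the first $l_1$ copies of $b$ become $\rho(\chi)(b) = b(\chi^{-1})$, while $a_1$ and the trailing $l_2$ copies of $b$ are unchanged, and the output character is $\chi$. Collecting terms gives
\[
m_1^{b\otimes 1}(a_1\otimes \chi) = \sum_{l_1,l_2\geq 0} m_{l_1+1+l_2}\bigl(\underbrace{b(\chi^{-1}),\ldots,b(\chi^{-1})}_{l_1},\, a_1,\, \underbrace{b,\ldots,b}_{l_2}\bigr)\otimes \chi,
\]
which is precisely $m_1^{b(\chi^{-1}),b}(a_1)\otimes \chi$ in the notation of \eqref{eq:dd}. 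Under $w\otimes \chi\mapsto w$, this is exactly the claimed identification of differentials.

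I expect no real obstacle: once the semidirect product formula is written out, the verification is purely combinatorial. The only subtle point is the convention that $\rho(\chi_{i+1}\cdots\chi_k)$ is the product of the characters strictly to the right of position $i$, which is why the trailing $l_2$ copies of $b$ remain unaffected while the leading $l_1$ copies get twisted by $\chi$. This asymmetry is precisely what produces the asymmetric deformation $m_1^{b(\chi^{-1}),b}$ on the right-hand side, matching Figure \ref{chi}.
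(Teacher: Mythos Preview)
Your proposal is correct and is precisely the computation the paper carries out in the displayed equation immediately preceding the lemma, together with the identity $\rho(\chi)(b)=b(\chi^{-1})$ recorded just before the statement. The lemma is stated in the paper without a separate proof because that preceding discussion already establishes it, and your write-up simply makes the bookkeeping with \eqref{eq:interioraction} explicit.
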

In general,  $m_k^{b\otimes 1}$ is given by
\begin{align}
& m_k^{b\otimes 1}(a_1\otimes \chi_1,\cdots,a_k\otimes \chi_k)\label{twistedainfty}\\
=&m_k^{\rho(\chi_1\cdots\chi_k)b,\rho(\chi_2\cdots\chi_k)b,\cdots,\rho(\chi_k)b,b}\big(\rho(\chi_2\cdots\chi_k)a_1,\rho(\chi_3\cdots\chi_k)a_2,\cdots,a_k\big)\otimes \chi_1\cdots\chi_k.\nonumber
\end{align}
In particular, $m_2^b$ product of $\chi_1$ and $\chi_2$-eigenvectors goes to $\chi_1\chi_2$-eigenspace.

Sometimes it is convenient to work on  $hom_{\mathcal{C}}(\WT{O},\WT{O})$.
\begin{definition}\label{def:btil}
For a bounding cochain $b$, we set
$$\WT{b} := (\Phi)_*(b) = \Phi_1(b)$$
In particular, for $b({x}) = \sum_i x_i X_i \otimes 1$, 
we get 
$$\WT{b}({x}) = \sum_i x_i  \big( \sum_{g \in G} g(X_i) \big)$$
\end{definition}
As we assumed that $g_1O \neq g_2O$ for $g_1 \neq g_2$, $G$-action permutes the
output of $m_k$-operation  in $\hom(\WT{O},\WT{O})$ and  one can observe that
$W(\WT{b}) = W(b)$.

We remark that in \cite{CHL}, the following localized mirror functor has been defined.
\begin{theorem}[\cite{CHL}]\label{thm:upstairfunctor}
We have an  $\AI$-functor ${\mathcal{F}}^{\WT{O}}$ (resp. $\mathcal{F}^{\OL{O}}$) which are cohomologically injective on $\Hom$'s with $\WT{O}$ (resp. $\OL{O}$).
\[\xymatrix{
\mathcal{C} \ar[rr]^{\WT{\mathcal{F}}^{\WT{O}}} \ar[d]_{\textrm{quotient by} \; G} && MF^\AI_\HG(W)  \\
\mathcal{C}^G \ar[rr]^{\mathcal{F}^{\OL{O}}} && MF^\AI(W)\ar[u]_{\textrm{quotient by} \; \WH{G} }}\]

\end{theorem}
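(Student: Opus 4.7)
The construction is the curved Yoneda embedding relative to a weakly unobstructed reference object, so my plan is to treat the upstairs and downstairs functors as two packaging of the \emph{same} $\AI$-data and then verify the square commutes via the isomorphism of Lemma~\ref{lem:qq}.

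First, I would build $\mathcal{F}^{\OL{O}}$ on $\mathcal{C}^G$. On objects: for $O'\in Ob(\mathcal{C}^G)$, set
\[
\mathcal{F}^{\OL{O}}(O') := \bigl(\hom_{\mathcal{C}^G}(\OL{O},O')\otimes_{\Lambda_0} R,\ m_1^{b,0}\bigr),
\]
where $b=\sum x_i X_i$ is the fixed bounding cochain on $\OL{O}$ with potential $W$. Applying $\partial/\partial(\text{input})$ to the weak Maurer--Cartan equation $m(e^b)=W\cdot \be$ (precisely as in the proof of Proposition \ref{prop:bjac}) gives $(m_1^{b,0})^2 = W\cdot \mathrm{id}$, so this is a matrix factorization of $W$. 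On morphisms, define the $\AI$-components by the standard curved Yoneda formula
\[
\mathcal{F}^{\OL{O}}_k(w_1,\dots,w_k)(\alpha) := m_{k+1}^{b,0,\dots,0}(w_1,\dots,w_k,\alpha),
\]
with the $b$-decoration only on the $\OL{O}$-side. The $\AI$-relations for $\mathcal{F}^{\OL{O}}_\bullet$ follow by expanding the $\AI$-equations for $\mathcal{C}^G$ in powers of $b$, exactly the argument that makes $m^b$ itself an $\AI$-structure. Cohomological injectivity on $\hom(\OL{O},\OL{O})$ is tautological since $\mathcal{F}^{\OL{O}}(\OL{O}) = \mathcal{B}(\OL{O})$ and the map induced on $\hom$ at the $\OL{O}$-level is the identity on generators.

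Next I would build $\WT{\mathcal{F}}^{\WT{O}}$ in the same way but with reference $\WT{O} = \bigoplus_{g\in G} gO$, decorated by $\WT{b} := \Phi_1(b) = \sum_i x_i\sum_{g\in G} g(X_i)$ from Definition \ref{def:btil}. Since $G$ permutes the summands of $\WT{O}$, the underlying $R$-module $\hom_{\mathcal{C}}(\WT{O},O')\otimes R$ carries a natural strict $G$-action; its dual $\WH{G}$-action gives the equivariant structure, and the differential $m_1^{\WT{b},0}$ is $G$-equivariant because $\WT{b}$ is $G$-invariant. As $W(\WT b) = W(b)$ (the $G$-action permutes the output of $m_k$ in $\hom(\WT O,\WT O)$), this produces a $\WH{G}$-equivariant matrix factorization of the same potential $W$, and the higher $\AI$-components are defined by the same Yoneda formula.

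To verify the commutativity of the square, I would use Lemma \ref{lem:qq} object-wise: $\Phi_1$ provides an $\AI$-isomorphism identifying the semidirect product $\hom_{\mathcal{C}^G}(\OL{O},\OL{O})\rtimes \WH G$ with $\hom_{\mathcal{C}}(\WT O,\WT O)$, matching $\chi$-eigenspaces under the first $\WH G$-action with the corresponding $G$-isotypic components. Applied at a general object $O'$ instead of at $\OL{O}$, the same averaging formula $v\otimes \chi\mapsto \sum_g \chi(g^{-1})g\cdot v$ intertwines $\mathcal{F}^{\OL{O}}$ (post-composed with the inclusion $MF^\AI(W)\hookrightarrow MF^\AI_{\WH G}(W)$ that places everything in a single isotypic component) with the restriction of $\WT{\mathcal{F}}^{\WT O}$ to $G$-invariant objects; taking $\WH G$-invariants of the upstairs functor then recovers $\mathcal{F}^{\OL{O}}$ on the quotient category $\mathcal{C}^G=\mathcal{C}^G$. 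The cohomological injectivity upstairs follows from that downstairs together with this equivariant identification.

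The main obstacle I expect is bookkeeping rather than conceptual: one must check that the Yoneda $\AI$-components are compatible with the semidirect product twist \eqref{eq:interioraction}, i.e.\ that the character factors produced by moving a $\chi$ past a tuple of inputs exactly cancel the signs and $G$-shifts produced by $\Phi_1$ on the output side. Making this match work at every $k$ is the delicate point; once it is established the commutative diagram and cohomological injectivity follow formally.
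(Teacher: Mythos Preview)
The theorem is quoted from \cite{CHL} and the paper does not reprove it; the construction is, however, recalled in the proof of Proposition~\ref{prop:m1}. Your overall strategy---the curved Yoneda embedding relative to the reference $(\OL{O},b)$ and its $G$-equivariant lift---is the correct one and matches \cite{CHL}. Two points need fixing.

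First, the slot convention and the square-to-$W$ argument. In the paper's convention (see the proof of Proposition~\ref{prop:m1}, following \cite{CHLnc}) the reference sits in the \emph{second} slot: $K\mapsto\bigl(\hom(K,\OL{O})\otimes R,\ -m_1^{0,b}\bigr)$, and $\mathcal{F}_k(p_1,\dots,p_k)=m_{k+1}(p_1,\dots,p_k,\bullet)$. With your placement $m_1^{b,0}$ you would land in $MF(-W)$, not $MF(W)$. Moreover, the identity $(m_1^{b_0,b_1})^2=(W(b_1)-W(b_0))\cdot\mathrm{id}$ is not obtained by ``differentiating the MC equation as in Proposition~\ref{prop:bjac}''; that proposition computes $m_1^b(e_i)=\partial_{x_i}W\cdot\be$, which is a different statement. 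The square identity comes directly from the curved $\AI$-relation for $m_1^{b_0,b_1}$ together with $m_0^{b_i}=W(b_i)\cdot\be$ and the unit axiom.

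Second, and this is the genuine gap: cohomological injectivity is \emph{not} tautological. The linear part $\mathcal{F}_1$ sends $w$ to the operator $m_2^{b,\dots}(w,\bullet)$, which is not the identity on $\hom(\OL{O},\OL{O})$; so ``identity on generators'' is simply false. The actual argument (Lemma~7.19 of \cite{CHL}, recalled in the proof of Proposition~\ref{prop:m1}) constructs an explicit right inverse on cohomology: evaluation at the unit, $\phi\mapsto\phi(\be)$. Since $\mathcal{F}_1(w)(\be)=m_2^b(w,\be)=\pm w$ by the unit axiom, this composition is $\pm\mathrm{id}$ on cohomology, whence injectivity of $\mathcal{F}_1$. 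This works for $\hom$'s with the reference object in general, not only for the endomorphism algebra. Your plan for the commutativity of the square via Lemma~\ref{lem:qq} is on the right track, but you should state it as intertwining the two Yoneda maps on morphism spaces (not on objects), exactly the computation carried out in the proof of Lemma~\ref{lem:qq}.
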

\subsection{ $\AI$-algebra $\CB\rtimes \WH{G}$ and its $\WH{G}$-quotient}

Recall that in non-equivariant case, we  defined a new $\AI$-algebra $\mathcal{B}$ by tensoring $R = \Lambda[x_1,\cdots, x_n]$ to an $\AI$-algebra $\mathcal{A}$.
\begin{definition}
An $\AI$-algebra $\CB\rtimes \WH{G}$ is the data
$$\big((hom_{\mathcal{C}^G}(\OL{O}, \OL{O}) \rtimes \HG) \otimes R,\{m_k^{b\otimes 1}\}\big)$$
where $m_k$ is the $R$-linear extension of $m_k$ on $hom_{\mathcal{C}^G}(\OL{O}, \OL{O}) \rtimes \HG$, and $b\otimes 1 =\sum x_i X_i \otimes 1$ is a Maurer-Cartan element for $\CB\rtimes \HG$.
\end{definition}

\begin{lemma}\label{lem:isobr}
Using  Lemma \ref{lem:qq} (tensoring $R$), we can identify the following two $\AI$-algebras via  $\AI$-isomorphism $\Phi$.
\[ \big( \CB \rtimes \HG, \{m_k^{b\otimes 1}\}\big) \simeq \big( \hom_{\mathcal{C}}(\WT{O},\WT{O})\otimes R, \{m_k^{\WT{b}}\}\big).\]
\end{lemma}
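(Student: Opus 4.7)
The plan is to promote the $\AI$-isomorphism $\Phi$ of Lemma \ref{lem:qq} to an $R$-linear $\AI$-isomorphism $\Phi^R$ and then invoke the general principle that an $\AI$-isomorphism intertwines Maurer-Cartan bar-deformations.

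First I would extend $\Phi$ over $R=\Lambda[x_1,\ldots,x_n]$ in the trivial way: define $\Phi^R_1(v\otimes \chi \otimes r):=\Phi_1(v\otimes \chi)\otimes r$ and $\Phi^R_{k\geq 2}=0$. Since the $\AI$-structures on both sides are defined as the $R$-linear extensions of the $\AI$-structures on $hom_{\mathcal{C}^G}(\OL{O},\OL{O})\rtimes \HG$ and $hom_\mathcal{C}(\WT{O},\WT{O})$ respectively (the variables $x_i$ are formal central parameters), the $\AI$-homomorphism equations satisfied by $\Phi$ carry over term-by-term to $\Phi^R$, and invertibility of $\Phi_1$ (shown in Lemma \ref{lem:qq}) gives invertibility of $\Phi^R_1$.

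Next I would identify the image of the Maurer-Cartan element. By construction $b\otimes 1=\sum_i x_i\, X_i\otimes \mathbf{1}$, and Definition \ref{def:hatac0} gives $\mathbf{1}(g^{-1})=1$, so
\[
\Phi^R_1(b\otimes 1)\;=\;\sum_i x_i\,\Phi_1(X_i\otimes \mathbf{1})\;=\;\sum_i x_i\Bigl(\sum_{g\in G}g(X_i)\Bigr)\;=\;\WT{b},
\]
matching Definition \ref{def:btil}. Because $\Phi^R$ has no higher components, the pushforward $\Phi^R_*(b\otimes 1):=\sum_{k\geq 1}\Phi^R_k((b\otimes 1)^{\otimes k})$ collapses to $\Phi^R_1(b\otimes 1)=\WT{b}$.

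Finally I would invoke the standard fact from Maurer--Cartan formalism (see, e.g., \cite{FOOO}): if $\Psi:\mathcal{A}_1\to\mathcal{A}_2$ is a unital $\AI$-homomorphism and $b\in\mathcal{A}_1$ is a weak bounding cochain with potential $W$, then $\Psi_*(b)$ is a weak bounding cochain in $\mathcal{A}_2$ with the same potential, and the components $\Psi_k^{b,\ldots,b}$ assemble into an $\AI$-homomorphism $(\mathcal{A}_1,\{m_k^b\})\to (\mathcal{A}_2,\{m_k^{\Psi_*(b)}\})$ whose linear part coincides with $\Psi_1$. Applying this to $\Psi=\Phi^R$ and $b\otimes 1$ yields the asserted $\AI$-isomorphism between $(\CB\rtimes\HG,\{m_k^{b\otimes 1}\})$ and $(hom_\mathcal{C}(\WT{O},\WT{O})\otimes R,\{m_k^{\WT{b}}\})$. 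The only work is bookkeeping; the one subtle point I would double-check is that the sign and order conventions used in \eqref{eq:interioraction} and \eqref{twistedainfty} for the semi-direct product match those implicit in Lemma \ref{lem:qq} so that no additional character factors appear when comparing $m_k^{b\otimes 1}$ with $m_k^{\WT{b}}$ via $\Phi^R$.
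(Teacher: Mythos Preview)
Your proposal is correct and is precisely the argument the paper has in mind: the paper states the lemma without proof, indicating only that it follows from Lemma \ref{lem:qq} after tensoring with $R$; you have simply written out the two steps this entails (extending $\Phi$ $R$-linearly and checking $\Phi^R_1(b\otimes 1)=\WT{b}$, which is exactly Definition \ref{def:btil}), and then appealed to the standard compatibility of $\AI$-morphisms with Maurer--Cartan deformations. There is nothing to add; the sign/order caveat you flag is already handled by the proof of Lemma \ref{lem:qq}.
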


We now define the second $\WH{G}$-action on $\CB\rtimes \HG$ (which is strict).
\begin{definition}\label{def:hatac}
First, for $X_i \in hom_{\mathcal{C}}(O, g_i O)$, recall from \eqref{def:cocycleaction}
\[\chi \cdot X_i = \rho(\chi)(X_i)=\chi(g_i^{-1}) X_i.\]
For the dual variable, we set $$ \chi \cdot x_i := \chi(g_i) x_i.$$
Then we define the $\HG$-action on $\CB \rtimes \HG$ by
\[ \chi\cdot(r(x)v\otimes \eta):= r(\chi\cdot x) \rho(\chi)v\otimes \eta.\]
\end{definition}
\begin{remark}
On $\rtimes \HG$ part, $\HG$ is supposed to act by conjugation in previous literatures, but since $\HG$ is abelian, our action on $\HG$-part is trivial.
\end{remark}
For this second $\WH{G}$ action, we have $\chi \cdot (b\otimes 1) = (b \otimes 1)$, where $\chi$-action on $x_i$ and $X_i$ cancels each other.
\begin{remark}
Two $\HG$-actions  in Definition \ref{def:hatac0} and  in Definition \ref{def:hatac} are different.
The first $\HG$-action in Definition \ref{def:hatac0} only acts on $hom(\OL{O},\OL{O})$ and is used to define semi-direct product $\mathcal{B}\rtimes \WH{G}$. The second  actions in Definition \ref{def:hatac} is an action on $\mathcal{B}\rtimes \WH{G}$, and we will mostly use the second action from now on.
\end{remark}

\begin{prop}\label{ghatequivariance}
The $\HG$-action on $\CB\rtimes \HG$ is compatible with the $\AI$-structure $\{m_k^{b\otimes 1}\}$.
\end{prop}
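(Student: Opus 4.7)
The plan is to reduce the claim to two simple facts: (i) the $R$-linear undeformed $\AI$-operations $\{m_k\}$ on $\CB \rtimes \HG$ are $\HG$-equivariant for the second action of Definition \ref{def:hatac}, and (ii) the Maurer-Cartan element $b\otimes 1 = \sum_i x_i X_i \otimes 1$ is $\HG$-invariant for this action. Granted these two facts, the definition \eqref{eq:dd} expresses $m_k^{b\otimes 1}$ as a (convergent) sum of operations $m_{k+\sum l_j}$ with copies of $b\otimes 1$ inserted between inputs; applying $\chi\cdot$ then commutes with every such insertion by (ii) and with every such $m_{k+\sum l_j}$ by (i), yielding equivariance of $m_k^{b\otimes 1}$.

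Fact (ii) is immediate: for $X_i \in hom_{\mathcal{C}}(O, g_i O)$,
\[
\chi\cdot(x_i X_i \otimes 1) = (\chi\cdot x_i)\,\rho(\chi)(X_i)\otimes 1 = \chi(g_i)\,x_i\cdot \chi(g_i^{-1})\,X_i\otimes 1 = x_i X_i \otimes 1,
\]
and summing over $i$ gives $\chi\cdot(b\otimes 1)=b\otimes 1$, as observed in the remark preceding the proposition.

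For fact (i), applying \eqref{eq:interioraction} together with $R$-linearity to pure tensors $r_j(x)\, v_j\otimes \chi_j$ with $v_j\in hom_{\mathcal{C}}(O, h_j O)$ yields
\[
m_k(r_1 v_1\otimes \chi_1,\ldots,r_k v_k\otimes \chi_k) = \Big(\prod_j r_j\Big)\, m_k\bigl(\rho(\chi_2\cdots\chi_k) v_1,\ldots,\rho(\chi_k)v_{k-1},v_k\bigr)\otimes \chi_1\cdots\chi_k.
\]
The second action scales each coefficient $r_j$ via $x_i\mapsto \chi(g_i)x_i$ and acts on the output of $m_k$ through $\rho(\chi)$, which is multiplication by $\chi(h^{-1})$ on each eigenspace $hom_{\mathcal{C}}(O, h O)$. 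Because this $\rho(\chi)$ is induced from the strict $G$-action on $\mathcal{C}$, it is automatically compatible with all $m_k$; because $\HG$ is abelian, $\rho(\chi)$ further commutes with each internal factor $\rho(\chi_{i+1}\cdots\chi_k)$ appearing in the display above. A direct comparison of scalars using multilinearity of $m_k$ then shows that applying $\chi\cdot$ after $m_k$ agrees with applying $\chi\cdot$ to every input before $m_k$.

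The only potential obstacle is bookkeeping of the semi-direct product formula, but commutativity of $\HG$ makes this routine: there are no sign subtleties because the second action is by characters (degree zero scalars) and does not reorder inputs, and the fact that $\rho(\chi)$ acts as a single scalar on each sector means every interchange needed in the verification is a scalar interchange in $\Lambda^{\times}$.
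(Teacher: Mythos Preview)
Your proof is correct and follows essentially the same two-step strategy as the paper: first establish that the undeformed $m_k$ on $\CB\rtimes\HG$ are $\HG$-equivariant (the paper does this by directly pulling out the scalars $\eta(g_i^{-1})$ and observing that the output of $m_k$ lands in the sector $hom(O,g_1\cdots g_k O)$, while you route the same computation through the semi-direct product formula), and then use invariance of $b\otimes 1$ to pass to $m_k^{b\otimes 1}$. One small remark: your phrase ``$\rho(\chi)$ is induced from the strict $G$-action'' is slightly off as a justification---the compatibility of $\rho(\chi)$ with $m_k$ follows from the fact that $m_k$ respects the $G$-grading on sectors (so the output sector is the product of the input sectors), not literally from strictness of the $G$-action; but the conclusion you draw is correct.
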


\begin{proof}
Let $f_i v_i\otimes \chi_i \in \CB\rtimes \HG$ for $i=1,\cdots,k$, where $f_i \in R$ and $v_i \in hom_{\mathcal{C}}({O},g_i{O})$. Let $\eta \in \HG$. Then,
\begin{align*}
 m_k(\eta\cdot(f_1v_1\otimes \chi_1),\cdots,\eta\cdot(f_kv_k\otimes\chi_k))&= (\eta\cdot(f_1\cdots f_k)) m_k(\eta(g_1^{-1})v_1\otimes\chi_1,\cdots,\eta(g_k^{-1})v_k\otimes \chi_k)\\
&=( \eta \cdot(f_1\cdots f_k))\eta(g_1^{-1}\cdots g_k^{-1})m_k(v_1\otimes\chi_1,\cdots,v_k\otimes\chi_k)\\
&= (\eta\cdot(f_1\cdots f_k))\eta\cdot m_k(v_1\otimes\chi_1,\cdots,v_k\otimes\chi_k)\\
&= \eta\cdot m_k(f_1v_1\otimes\chi_1,\cdots,f_kv_k\otimes \chi_k).
\end{align*}
The third equality comes from the fact 
\[ m_k\big((v_1)_{g_1},\cdots,(v_k)_{g_k}\big) \in hom_{\mathcal{C}}({O},g_1\cdots g_k{O})\]
and the definition of $\HG$-action \eqref{def:cocycleaction}.
Since $b \otimes 1$ is invariant under $\HG$-action, $m_k^{b\otimes 1}$ is also compatible with the action.
\end{proof}

\begin{corollary}
$\WH{G}$-invariant part  of $\CB\rtimes \HG$, denoted as $(\CB\rtimes \HG)^{\WH{G}}$,  has an induced $\AI$-structure $\{m_k^{b\otimes 1}\}$.
\end{corollary}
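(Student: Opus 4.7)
The plan is to deduce the corollary directly from Proposition \ref{ghatequivariance} by checking that the $\WH{G}$-invariant subspace is closed under each $m_k^{b\otimes 1}$ and that the $\AI$-relations restrict.

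First, I would observe that Proposition \ref{ghatequivariance} shows $m_k^{b\otimes 1}$ is $\WH{G}$-equivariant for the second $\WH{G}$-action of Definition \ref{def:hatac}; equivalently, for any $\chi \in \WH{G}$ and any inputs $v_1,\ldots,v_k$,
\[
\chi \cdot m_k^{b\otimes 1}(v_1,\ldots,v_k) = m_k^{b\otimes 1}(\chi\cdot v_1,\ldots,\chi\cdot v_k).
\]
Thus if each $v_i$ lies in $(\CB\rtimes \HG)^{\WH{G}}$, the right-hand side equals $m_k^{b\otimes 1}(v_1,\ldots,v_k)$ for every $\chi$, so the output is again $\WH{G}$-invariant. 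This shows $(\CB\rtimes \HG)^{\WH{G}}$ is closed under every $m_k^{b\otimes 1}$, and hence the operations restrict to $\Lambda$-multilinear maps on the invariant subspace.

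Next, I would note that the $\AI$-relations for $\{m_k^{b\otimes 1}\}$ on $\CB\rtimes \HG$ are universal identities of the form $\sum \pm m_\ast(\cdots m_\ast(\cdots) \cdots) = 0$ holding for all tuples of inputs. Restricting to $\WH{G}$-invariant inputs, every intermediate expression $m_\ell^{b\otimes 1}(\cdots)$ is itself $\WH{G}$-invariant by the previous paragraph, so each term in the $\AI$-relation is a well-defined element of $(\CB\rtimes \HG)^{\WH{G}}$, and the identity continues to hold there. Unitality similarly passes to the invariant subspace because the strict unit $\be$ is $\WH{G}$-invariant (the variables $x_i$ are absent from $\be$ and $\rho(\chi)\be=\be$, and there is no $\HG$-factor).

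I do not expect a serious obstacle: the entire content is that equivariance of operations implies that the fixed-point subspace inherits the algebraic structure, which is a general principle. The only thing to watch for is consistency of signs and the convergence/gappedness conditions, but since $(\CB\rtimes \HG)^{\WH{G}}$ is a $\Lambda_0$-submodule of the gapped filtered $\AI$-algebra $\CB\rtimes \HG$ and the operations restrict, the filtered structure is automatically inherited. Thus $(\CB\rtimes \HG)^{\WH{G}}$ is a sub-$\AI$-algebra with operations $\{m_k^{b\otimes 1}\}$, as claimed.
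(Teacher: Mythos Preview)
Your proposal is correct and matches the paper's approach: the corollary is stated immediately after Proposition \ref{ghatequivariance} without proof, precisely because it follows by the standard argument you give---equivariance of the operations forces the invariant subspace to be closed under each $m_k^{b\otimes 1}$, and the $\AI$-relations restrict verbatim. You have simply made explicit what the paper leaves implicit.
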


\section{Cohomology algebra of $\mathcal{B} \rtimes \WH{G}$ and orbifold Jacobian ring}\label{sec:eqbj}
From an $\AI$-algebra $hom_{\mathcal{C}^G}( \OL{O}, \OL{O})$ with potential function $W$, we constructed an $\AI$-algebra $\mathcal{B}\rtimes \WH{G}$ and its $\WH{G}$-quotient  $(\mathcal{B}\rtimes \WH{G})^{\WH{G}}$ in the previous section. We find  the relation to the orbifold Jacobian ring of $(W, \WH{G})$. 
\begin{theorem}\label{thm:main}
Assume that $W$ has an isolated singularity at the origin.
Suppose an $\AI$-algebra  $hom_{\mathcal{C}^G}( \OL{O}, \OL{O})$ satisfies an Assumption \ref{as2}. 
Then we have an algebra isomorphism:
$$  H^*\big( (\mathcal{B}\rtimes \WH{G})^{\WH{G}}\big)_{alg} \cong \Jac(W,\WH{G}).$$
\end{theorem}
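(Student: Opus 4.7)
The plan is to construct an $\AI$-quasi-isomorphism
$$(\mathcal{B}\rtimes\WH{G})^{\WH{G}} \;\simeq\; hom_{MF^\AI(W(y)-W(x))}\bigl(\Delta_W^{\WH{G}\times\WH{G}},\;\Delta_W^{\WH{G}\times\WH{G}}\bigr)$$
and then invoke Theorem \ref{thm:orbjachh} together with Theorem \ref{PVtheorem}, the latter identifying $\Delta_W^{\WH{G}\times\WH{G}}$ as a kernel for $\mathrm{Id}_{MF_{\WH{G}}(W)}$, to conclude that the cohomology of the right hand side is $\Jac(W,\WH{G})$. The sign modification of Remark \ref{rem:dgmodification} turns the induced dg product on this cohomology into the $\AI$-product denoted by $H^{\ast}(\cdot)_{alg}$.

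The quasi-isomorphism is produced in two stages. First I exhibit a Floer-theoretic representative of $\Delta_W^{\WH{G}\times\WH{G}}$. Consider the upstairs Floer complex $CF(\WT{L},\WT{L})\otimes R^e$ equipped with the deformed operator $m_1^{\WT{b}(x),\WT{b}(y)}$, which is a matrix factorization of $W(y)-W(x)$ carrying a natural $\WH{G}\times\WH{G}$-action, one factor for each bounding cochain, via Definition \ref{def:hatac} and the two-variable analog of Proposition \ref{ghatequivariance}. Applying the $\AI$-isomorphism $\Phi_1$ of Lemma \ref{lem:qq} extended to two bounding cochains and tensored with $R^e$, each $\WH{G}$-eigenspace of this Floer MF is identified with the downstairs Floer MF $(CF(\OL{L},\OL{L})\otimes R^e,m_1^{b(x),b(y)})$, which by Assumption \ref{as2} is isomorphic to the Koszul kernel $\Delta_W$. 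Hence the upstairs Floer MF is non-equivariantly isomorphic to $\bigoplus_{h\in\WH{G}}\Delta_W$, and Remark \ref{rmk:equivkernel} lets me conclude that it represents $\Delta_W^{\WH{G}\times\WH{G}}$ as a $\WH{G}\times\WH{G}$-equivariant kernel for $\mathrm{Id}_{MF_{\WH{G}}(W)}$.

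Second, I compute endomorphisms of this Floer-theoretic kernel in the non-equivariant category $MF^\AI(W(y)-W(x))$ and match them with $(\mathcal{B}\rtimes\WH{G})^{\WH{G}}$. On the Floer side, the $\AI$-endomorphism algebra of the upstairs Floer MF recovers $CF(\WT{L},\WT{L})\otimes R$ with operations $\{m_k^{\WT{b}}\}$, which by Lemma \ref{lem:isobr} is $\AI$-isomorphic to $\mathcal{B}\rtimes\WH{G}$; taking $\WH{G}$-invariants under the second action of Definition \ref{def:hatac} yields $(\mathcal{B}\rtimes\WH{G})^{\WH{G}}$. On the matrix-factorization side, restricting from all morphisms of the $\WH{G}\times\WH{G}$-equivariant object $\Delta_W^{\WH{G}\times\WH{G}}$ to the diagonally $\WH{G}$-invariant ones should produce the same algebra. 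The main obstacle, and the subtlest point in the argument, is the verification that these two $\WH{G}$-invariants correspond under the above identifications: concretely, one must check that the second $\WH{G}$-action of Definition \ref{def:hatac} (with $\chi\cdot x_i=\chi(g_i)x_i$ and $\chi\cdot X_i=\chi(g_i^{-1})X_i$) matches the diagonal conjugation of the $\WH{G}\times\WH{G}$-action on $\Delta_W^{\WH{G}\times\WH{G}}$ specified by \eqref{eq:hhaction}. Once this bookkeeping is completed, the two invariant $\AI$-algebras are identified and the theorem follows.
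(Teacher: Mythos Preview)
Your overall architecture is close to the paper's, but there is a genuine gap in the equivariance bookkeeping that makes the argument, as stated, incorrect.

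The displayed quasi-isomorphism in your first paragraph has the wrong target. Theorems~\ref{thm:orbjachh} and~\ref{PVtheorem} together identify $\Jac(W,\WH{G})$ with the \emph{$\WH{G}\times\WH{G}$-equivariant} morphism space
\[
\Hom_{MF_{\WH{G}\times\WH{G}}(W(y)-W(x))}\bigl(\Delta_W^{\WH{G}\times\WH{G}},\Delta_W^{\WH{G}\times\WH{G}}\bigr),
\]
not the non-equivariant one you wrote. The non-equivariant hom is strictly larger: $\Delta_W^{\WH{G}\times\WH{G}}$ is non-equivariantly a direct sum of $|\WH{G}|$ copies of $\Delta_W$, so its non-equivariant endomorphism cohomology has $|\WH{G}|^2$ blocks, whereas $(\mathcal{B}\rtimes\WH{G})^{\WH{G}}$ has only $|\WH{G}|$ sectors. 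Consequently your claim in the second stage, that the full $\AI$-endomorphism algebra of the upstairs Floer kernel ``recovers $CF(\WT{L},\WT{L})\otimes R$'', fails already at the level of ranks: the former is a rank-$(|\WH{G}|\cdot 2^n)^2$ module over $R^e$, the latter a rank-$|\WH{G}|\cdot 2^n$ module over $R$.

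What the paper actually does is construct, via the localized mirror functor, an $\AI$-map $\mathcal{F}:\mathcal{B}\rtimes\WH{G}\to hom_{MF^\AI}(\Delta_W^{\WH{G}\times\WH{G}},\Delta_W^{\WH{G}\times\WH{G}})$, prove it is cohomologically injective, and then show its image is exactly the $(1\times\WH{G})$-equivariant morphisms (surjectivity onto this subspace uses Assumption~\ref{as2} and Koszul self-duality). The crucial point you miss is that the second $\WH{G}$-action of Definition~\ref{def:hatac} on $\mathcal{B}\rtimes\WH{G}$ corresponds, through $\mathcal{F}_1$, to the residual $(\WH{G}\times 1)$-conjugation on morphisms, \emph{not} to the diagonal conjugation. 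Thus taking $\WH{G}$-invariants on the Floer side promotes $(1\times\WH{G})$-equivariance to full $(\WH{G}\times\WH{G})$-equivariance, which is what is needed. Your ``diagonal $\WH{G}$-invariants'' would leave you in a space strictly larger than the $(\WH{G}\times\WH{G})$-equivariant one, and the identification with $\Jac(W,\WH{G})$ would not follow.
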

\begin{proof}[Proof of the Theorem]
We will use the fact that
$$\Jac(W,\WH{G}) \cong \Hom_{MF_{\WH{G} \times \WH{G}} (W(y) - W(x))}( \Delta^{\WH{G} \times \WH{G}}_W, \Delta^{\WH{G} \times \WH{G}}_W).$$
Let us briefly explain this.
By Shklyarov, $\Jac(W,\WH{G})$ is isomorphic to the Hochschild cohomology $H^*(MF_{\WH{G}}(W), MF_{\WH{G}}(W))$ which is (from Definition \ref{def:hhcohomology}) 
$$hom_{R\underline{Hom}_c(MF_{\WH{G}}(W), MF_{\WH{G}}(W))}(\mathrm{Id}_{MF_{\WH{G}}(W)},\mathrm{Id}_{MF_{\WH{G}}(W)}).$$
 On the other hand, Polishchuk-Vaintrob \cite{PV} showed that
$$R\underline{Hom}_c (MF_{\WH{G}}(W), MF_{\WH{G}}(W)) \cong MF_{\WH{G} \times \WH{G}} (W(y) - W(x))$$
such that identity functor $\mathrm{Id}_{MF_{\WH{G}}(W)}$ corresponds to a kernel $\Delta^{\WH{G} \times \WH{G}}_W$.

To prove the main theorem, we will show how to relate $\Delta^{\WH{G} \times \WH{G}}_W$ and $\mathcal{B} \rtimes \WH{G}$ using localized mirror functor of \cite{CHL}. Let us make the following shorthand notation.
\[\OL{O}^{x} := \big(\OL{O},b(x) \big), \WT{O}^{y} := \big(\WT{O}, \WT{b}(y)\big).\]
Extend the $\HG$-action on $R^e$ by \[ \chi\cdot y_i:=\chi(g_i)y_i\] when $X_i \in \Hom(O,g_i O)$. 
\begin{lemma}\label{prop:eo}
With respect to the $\HG$-action on $hom_{\mathcal{C}^G}(\OL{O},\OL{O})$ and $R^e$,
\begin{equation}\label{eq:m1}
hom_{\mathcal{C}^G}(\OL{O}^x,\OL{O}^y)\otimes R^e,m_1^{b(x),b(y)})
\end{equation}
 is a $\HG$-equivariant matrix factorization of $W(y)-W(x)$.
\end{lemma}
\begin{proof}[Proof of the Lemma]
Let $v\in hom_{\mathcal{C}^G}(\OL{O}^x,\OL{O}^y)\otimes R^e.$ By Proposition \ref{ghatequivariance}, 
\[\chi\cdot m_1^{b(x),b(y)}(v)=m_1^{\chi\cdot b(x),\chi\cdot b(y)}(\chi\cdot v).\]
Since $\chi\cdot b(x)=b(x)$ and $\chi\cdot b(y)=b(y)$,  $m_1^{b(x),b(y)}$ is $\HG$-equivariant.
\end{proof}
From  Assumption \ref{as2}  and by Theorem \ref{PVtheorem} (and appealing to Remark \ref{rmk:equivkernel}), we can
obtain $\Delta_W^{\HG\times\HG}$ from the matrix factorization \eqref{eq:m1}.
\[\Delta_W^{\HG\times\HG}:= \bigoplus_{\chi\in \HG} \big(hom_{\mathcal{C}^G}(\OL{O}^{\chi\cdot x},\OL{O}^y)\otimes R^e, m_1^{b(\chi\cdot x),b(y)}\big).\]
Using Lemma \ref{lem:twisteddiff}, we rewrite the kernel as
\[ \Delta_W^{\HG\times\HG}=\bigoplus_{\chi\in \HG} \big((hom_{\mathcal{C}^G}(\OL{O}^x,\OL{O}^y)\otimes \chi)\otimes R^e, m_1^{b(x)\otimes 1,b(y)\otimes 1}\big).\]
Then the $\HG\times\HG$-action on $\Delta_W^{\HG\times\HG}$ in \eqref{eq:hhaction} translates into
\[ (\chi_1\times \chi_2)\cdot (r(x,y)v\otimes \chi)= r(\chi_1\cdot x,\chi_2\cdot y)\rho(\chi_2)(v)\otimes \chi_2\chi\chi_1^{-1}.\]
Observe that if we restrict the action to the diagonal subgroup $\HG$, then it coincides with the action in Definition \ref{def:hatac}.

The following two propositions will prove the main theorem.
\begin{prop}\label{prop:m1}
There is an $\AI$-homomorphism between two $\AI$-algebras
\begin{equation}\label{comparingmap}
\mathcal{F}: \mathcal{B} \rtimes \WH{G} \to hom_{MF^{\AI}(W(y) - W(x))}( \Delta^{\WH{G} \times \WH{G}}_W,  \Delta^{\WH{G} \times \WH{G}}_W).
\end{equation}
Moreover, $\mathcal{F}_1$ is injective in cohomology.
\end{prop}

\begin{prop}\label{prop:m2}
The cohomological image of $\mathcal{B} \rtimes \WH{G}$  under $\mathcal{F}_1$ are exactly morphisms 
that are $(1 \times \HG)$-equivariant, hence given by
\begin{equation}\label{eq:1ghat}
\Hom_{MF^{\AI}_{1\times \HG}(W(y) - W(x))}( \Delta^{\WH{G} \times \WH{G}}_W,  \Delta^{\WH{G} \times \WH{G}}_W).
\end{equation}
The cohomological image of $(\mathcal{B}\rtimes\HG)^\HG$ are exactly  $\HG \times \HG$-equivariant morphisms given by
$$\Hom_{MF^{\AI}_{\WH{G}\times \WH{G}}(W(y) - W(x))}( \Delta^{\WH{G} \times \WH{G}}_W,  \Delta^{\WH{G} \times \WH{G}}_W).$$
\end{prop}

\begin{proof}[Proof of Proposition \ref{prop:m1}]
We use the idea of localized mirror functor in \cite{CHL} to define an $\AI$-functor $\mathcal{F}$ to matrix factorizations.
Recall from \cite{CHL} that  given an $\AI$-algebra $\hom(\WT{O},\WT{O})$ with bounding cochains $\WT{b}(y)$ and potential function $W(y)$, an $\AI$-functor 
$$\mathcal{F}^{\WT{O}^y}: \mathcal{C} \to MF(W(y))$$
(relative to $(\WT{O},\WT{b}(y))$) is defined
by sending an object $K$ of $\mathcal{C}$ to the matrix factorization $\big( \hom(K, \WT{O}), -m_1^{0,\WT{b}(y)}\big)$.
Higher part of the functor is defined as
$$\mathcal{F}^{\WT{O}^y}_k(p_1,\cdots,p_k) = m_{k+1}(p_1,\cdots,p_k,\cdot).$$
Lemma 7.19 of \cite{CHL}  states that this functor is cohomologically injective. This was shown by 
constructing an explicit right inverse using the unit of $\AI$-algebra.
We remark that we use the sign convention of \cite{CHLnc} by taking $\hom( \cdot, \WT{O})$ instead of $\hom(\WT{O},\cdot)$.

For the proof, we will use the following variation of the above construction. 
Namely, we can apply the functor $\mathcal{F}^{\WT{O}^y}$ to the  same object $\WT{O}$ but equipped with a bounding cochain $\WT{b}(x)$.
In this case, $(\WT{O},\WT{b}(x))$ is mapped by $\mathcal{F}^{\WT{O}^y}$ to
$$\big( hom_{\mathcal{C}}(\WT{O}^x,\WT{O}^y)\otimes R^e,-m_1^{\WT{b}(x),\WT{b}(y)} \big),$$
which is a  matrix factorization of
$W(y) - W(x)$.
Also, $k$-th part of the $\AI$-functor in this case is given as follows.
\begin{align*}
 ({\CF}^{\WT{O}^y})_k:&\;\; \big(hom_{\mathcal{C}}(\WT{O}^x,\WT{O}^x)\otimes R^e \big)^{\otimes k}\\
 \to & hom_{MF^{\AI}(W(y) -W(x))}\big((hom_{\mathcal{C}}(\WT{O}^{x},\WT{O}^{y})\otimes R^e,-m_1^{\WT{b}(x),\WT{b}(y)}),(hom_{\mathcal{C}}(\WT{O}^{x},\WT{O}^{y})\otimes R^e,-m_1^{\WT{b}(x),\WT{b}(y)})\big), \end{align*}
\[ (p_1,\cdots,p_k)  \mapsto m_{k+1}^{\WT{b}(x),\cdots,\WT{b}(x),\WT{b}(y)}(p_1,\cdots,p_k,\bullet).\]

We can show that this defines an $\AI$-homomorphim by the same argument in \cite{CHL} (hence omit the proof). The injectivity can be also shown as in \cite{CHL}.


We modify the sign for matrix factorization category using the following simple lemma
\begin{lemma}\label{signchainmap}
For an $\AI$-algebra $(A,m_1,m_2, m_{\geq 3}=0)$, there is an $\AI$-isomorphism
\[ (A,m_1,m_2) \simeq (A,-m_1,m_2),\;\; a \mapsto (-1)^{|a|}a.\] 
\end{lemma}
Applying this to the object $(hom_{\mathcal{C}}(\WT{O}^{x},\WT{O}^{y})\otimes R^e,-m_1^{\WT{b}(x),\WT{b}(y)})$,
we remove the negative sign.
Using the isomorphism in Lemma \ref{lem:isobr}, we have
\[ \Delta_W^{\HG\times\HG}=\big((hom_{\mathcal{C}^G}(\OL{O}^x,\OL{O}^y)\rtimes \HG)\otimes R^e, m_1^{b(x)\otimes 1,b(y)\otimes 1}\big) \simeq \big(hom_{\mathcal{C}}(\WT{O}^x,\WT{O}^y)\otimes R^e,m_1^{\WT{b}(x),\WT{b}(y)}\big).\]
Combining these isomorphisms, we get the desired $\AI$-morphism
\[ \CF:\mathcal{B} \rtimes \WH{G} \to hom_{MF^{\AI}(W(y) - W(x))}( \Delta^{\WH{G} \times \WH{G}}_W,  \Delta^{\WH{G} \times \WH{G}}_W). \qedhere\]
\end{proof}

Now, it remains to verify the assertions about equivariance of morphisms.
\begin{proof}[Proof of Proposition \ref{prop:m2}]
We first prove that the image of $\CF_1$ is included in 
\[hom_{MF^\AI_{1\times\HG}(W(y)-W(x))}(\Delta_W^{\HG\times\HG},\Delta_W^{\HG\times\HG}).\] 
Let 
\[f(x)p\otimes\eta \in (hom_{\mathcal{C}^G}(\OL{O}^x,\OL{O}^x)\otimes \eta)\otimes R,\] 
and 
\[r(x,y)v\otimes \chi \in (hom_{\mathcal{C}^G}(\OL{O}^x,\OL{O}^y)\otimes \chi)\otimes R^e.\]
By definition of $\HG\times\HG$-action,
\begin{align}
&(\chi_1\times \chi_2)\cdot \big(m_2^{b(x)\otimes 1,b(x)\otimes 1,b(y)\otimes 1}(f(x)p\otimes\eta, r(x,y)v\otimes \chi)\big) \label{eq:hhequiv1}\\
=& \rho(\chi_2)\Big(m_2^{\rho(\eta\chi)b(\chi_1\cdot x),\rho(\chi)b(\chi_1\cdot x),b(\chi_2\cdot y)}\big(\rho(\chi)(f(\chi_1\cdot x)p),r(\chi_1\cdot x,\chi_2\cdot y)v\big)\Big)\otimes\chi_2\eta\chi\chi_1^{-1}\nonumber\\
=& m_2^{\rho(\chi_2\eta\chi)b(\chi_1\cdot x),\rho(\chi_2\chi)b(\chi_1\cdot x),\rho(\chi_2)b(\chi_2\cdot y)}\big(\rho(\chi_2\chi)(f(\chi_1\cdot x)p),\rho(\chi_2)(r(\chi_1\cdot x,\chi_2\cdot y)v)\big)\otimes \chi_2\eta\chi\chi_1^{-1}\nonumber\\
=& m_2^{\rho(\chi_2\eta\chi\chi_1^{-1})b(x),\rho(\chi_2\chi\chi_1^{-1})b(x),b(y)}\big(\rho(\chi_2\chi)(f(\chi_1\cdot x)p),\rho(\chi_2)(r(\chi_1\cdot x,\chi_2\cdot y)v)\big)\otimes\chi_2\eta\chi\chi_1^{-1}.\nonumber
\end{align}
On the other hand,
\begin{align}
&  m_2^{b(x)\otimes 1,b(x)\otimes 1,b(y)\otimes 1}\big(f(x)p\otimes\eta, (\chi_1\times\chi_2)\cdot(r(x,y)v\otimes \chi)\big)\label{eq:hhequiv2}\\
=& m_2^{b(x)\otimes 1,b(x)\otimes 1,b(y)\otimes 1}\big(f(x)p\otimes\eta, r(\chi_1\cdot x,\chi_2\cdot y)\rho(\chi_2)v\otimes \chi_2\chi\chi_1^{-1}\big)\nonumber\\
=& m_2^{\rho(\eta\chi_2\chi\chi_1^{-1})b(x),\rho(\chi_2\chi\chi_1^{-1})b(x),b(y)}\big(\rho(\chi_2\chi\chi_1^{-1})(f(x)p),r(\chi_1\cdot x,\chi_2\cdot y)\rho(\chi_2)v\big)\otimes \eta\chi_2\chi\chi_1^{-1}.\nonumber
\end{align}
If $\chi_1=1$, then \eqref{eq:hhequiv1}=\eqref{eq:hhequiv2}, so the morphism 
\[\CF_1(f(x)p\otimes\eta)=m_2^{b(x)\otimes1,b(x)\otimes1,b(y)\otimes1}(f(x)p\otimes\eta,\bullet)\] 
is $1\times\HG$-equivariant for any $f(x)p\otimes \eta$.

$\CF_1(f(x)p)\otimes\eta$ is a $\HG\times\HG$-equivariant morphism if and only if \eqref{eq:hhequiv1}=\eqref{eq:hhequiv2} for general $\chi_1 \times \chi_2$. It is equivalent to
\[f(\chi_1\cdot x)p= \rho(\chi_1^{-1})(f(x)p),\]
if and only if $f(x)p$ is $\HG$-invariant.

Since we have an injectivity result by Proposition \ref{prop:eo}, it suffices to show that any cohomology class of a $1\times \HG$-invariant morphism is an image of $\CF_1$. Then it proves not only the first but also the second statement, because any $\HG\times \HG$-equivariant morphism is a $1\times \HG$-equivariant morphism. For this, we show the following isomorphism of chain complexes
\[ (hom_{\mathcal{C}^G}(\OL{O}, \OL{O}) \rtimes \HG) \otimes R \simeq hom_{MF^{\AI}_{1\times\HG}(W(y)-W(x))}(\Delta_W^{\HG\times\HG},\Delta_W^{\HG\times\HG}),\]
which implies that
\begin{equation*}\label{moduleiso} H^*(\CB \rtimes \HG) \cong \Hom_{MF^{\AI}_{1\times \HG}(W(y)-W(x))}(\Delta_W^{\HG\times\HG},\Delta_W^{\HG\times\HG}).\end{equation*}
Let $\phi\in hom_{MF^{\AI}_{1\times\HG}(W(y)-W(x))}(\Delta_W^{\HG\times\HG},\Delta_W^{\HG\times\HG})$. Then the equivariance of $\phi$ gives us
\[ (1\times\chi^{-1})\cdot \phi(rv\otimes\chi)=\phi\big((1\times\chi^{-1})\cdot(rv\otimes\chi)\big)=\phi\big(r(x,\chi^{-1}\cdot y)\rho(\chi^{-1})v\otimes 1\big).\]
We conclude that a $1\times\HG$-equivariant morphism $\phi$ is completely determined by its restriction to $hom(\OL{O}^x,\OL{O}^y)\otimes 1$. Therefore,
\begin{align*}
& hom_{MF^{\AI}_{1\times\HG}(W(y)-W(x))}(\Delta_W^{\HG\times\HG},\Delta_W^{\HG\times\HG})\\
\simeq& hom_{R^e}\big((hom_{\mathcal{C}^G}(\OL{O}^x,\OL{O}^y)\otimes 1)\otimes R^e,\bigoplus_{\chi\in \HG} (hom_{\mathcal{C}^G}(\OL{O}^x,\OL{O}^y)\otimes \chi)\otimes R^e\big)\\
\simeq& hom_{R^e}\big(hom_{\mathcal{C}^G}(\OL{O}^x,\OL{O}^y)\otimes R^e,\bigoplus_{\chi\in\HG} hom_{\mathcal{C}^G}(\OL{O}^{\chi\cdot x},\OL{O}^y)\otimes R^e\big)\\
\simeq& hom_{R^e}\big( hom_{\mathcal{C}^G}(\OL{O}^x,\OL{O}^y)\otimes R^e,\bigoplus_{\chi\in\HG}R^e/(y- \chi\cdot x)  \big) \\
\simeq& \bigoplus_{\chi\in\HG} (hom_{\mathcal{C}^G}(\OL{O}^x,\OL{O}^{\chi\cdot x})\otimes R)^\vee.
\end{align*}
By Assumption \ref{as2}, the differential of $hom_{\mathcal{C}^G}(\OL{O}^x,\OL{O}^{\chi\cdot x})\otimes R$ is a sum of Koszul differentials. By self-duality of Koszul complexes, we have
\[ (hom_{\mathcal{C}^G}(\OL{O}^x,\OL{O}^{\chi\cdot x})\otimes R)^\vee \simeq \bigoplus_{\chi\in\HG} hom_{\mathcal{C}^G}(\OL{O}^{ x},\OL{O}^{\chi\cdot x})\otimes R.\]
On each $\chi$-summand, we take coordinate change $x \mapsto \chi^{-1}\cdot x$. Then we have
\[hom_{\mathcal{C}^G}(\OL{O}^x,\OL{O}^{\chi\cdot x})\otimes R\simeq hom_{\mathcal{C}^G}(\OL{O}^{\chi^{-1}\cdot x},\OL{O}^{x})\otimes R,\]
hence
\[hom_{MF^{\AI}_{1\times\HG}(W(y)-W(x))}(\Delta_W^{\HG\times\HG},\Delta_W^{\HG\times\HG})\simeq \bigoplus_{\chi\in \HG} hom_{\mathcal{C}^G}(\OL{O}^{\chi^{-1}\cdot x},\OL{O}^x)\otimes R \simeq (hom_{\mathcal{C}^G}(\OL{O},\OL{O})\rtimes \HG)\otimes R, \]
and we finish the proof of the Proposition.
\end{proof}
Let us finish the proof of the Theorem \ref{thm:main}. By Proposition \ref{prop:m2}, we have
\[ H^*((\CB\rtimes \HG)^\HG) \cong \Hom_{MF_{\HG\times\HG}^\AI(W(y)-W(x))}(\Delta_W^{\HG\times\HG},\Delta_W^{\HG\times\HG}).\]
Modifying sign to the dg-setting,
\[ H^*((\CB\rtimes \HG)^\HG)_{alg} \cong \Hom_{MF_{\HG\times\HG}(W(y)-W(x))}(\Delta_W^{\HG\times\HG},\Delta_W^{\HG\times\HG})\cong \Jac(W,\HG). \qedhere\] 

\end{proof}

It is natural to conjecture that the theorem holds before taking $\WH{G}$-quotient. 
\begin{conjecture} With the same assumptions as in Theorem \ref{thm:main}, we have
\[ H^* (\mathcal{B}\rtimes \WH{G})_{alg} \cong \Jac'(W,\HG).\]
\end{conjecture}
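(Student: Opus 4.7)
The plan is to parallel the proof of Theorem \ref{thm:main} but stop one step earlier, before taking $\WH{G}$-invariants on either side. The chain of identifications inside the proof of Proposition \ref{prop:m2} never actually uses $\WH{G}$-invariance on the source, so one already extracts
\[H^*(\CB \rtimes \WH{G}) \;\cong\; \Hom_{MF^{\AI}_{1 \times \WH{G}}(W(y) - W(x))}(\Delta_W^{\WH{G} \times \WH{G}}, \Delta_W^{\WH{G} \times \WH{G}})\]
for free. Thus the conjecture reduces, with the dg sign convention of Remark \ref{rem:dgmodification}, to showing that this $(1\times\WH{G})$-equivariant hom is algebra-isomorphic to $\Jac'(W,\WH{G})$.

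To unpack the right hand side as a module, decompose $\Delta_W^{\WH{G} \times \WH{G}} = \bigoplus_{\chi} \Delta_W^\chi$ and use that a $(1\times\WH{G})$-equivariant morphism is determined by its restriction to the $\chi=1$ source component, giving
\[\bigoplus_{\chi \in \WH{G}} \Hom_{MF^{\AI}(W(y) - W(x))}(\Delta_W^{1}, \Delta_W^\chi).\]
Each summand is a hom between Koszul matrix factorizations whose cokernels are supported on the graphs $\{y=x\}$ and $\{y = \chi\cdot x\}$; their scheme-theoretic intersection is $\mathrm{Fix}(\chi)$, and the derived Morita calculations of Polishchuk--Vaintrob and Shklyarov that underlie Theorem \ref{thm:orbjachh} identify this summand with $\Jac(W^\chi)\cdot\xi_\chi$. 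This matches $\Jac'(W,\WH{G})$ as a $\Z/2$-graded module.

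Matching the algebra structures is the harder step, and the approach I would take is to argue by $\WH{G}$-equivariance relative to the known invariant case. Concretely: the $\WH{G}$-action of Definition \ref{def:hatac} on $\CB \rtimes \WH{G}$ should, under the identification above, correspond to the $\WH{G}$-action on $\Jac'(W,\WH{G})$ from Definition \ref{def:ojr2}, and the $\WH{G}$-invariant part of the would-be isomorphism must recover the algebra isomorphism of Theorem \ref{thm:main}. If both the module identification and the multiplication are $\WH{G}$-equivariant, and if each $\chi$-eigensector on both sides is generated over the invariant subalgebra by a distinguished class, one can hope to promote the invariant isomorphism to the full one. The main obstacle is precisely this last promotion: Shklyarov's structure constant $\sigma_{g,h}$ in \eqref{eq:twjacprod} is extracted from the intricate Clifford expansion $e^{H_W}\Delta_0$ that converts bar resolutions into Koszul resolutions, and directly verifying that the Floer-theoretic $m_2^{b\otimes 1}$ between distinct $\chi$-sectors reproduces this coefficient requires extending the localized mirror functor calculus of \cite{CHL} to the cross-term compositions $\Delta_W^{1} \to \Delta_W^{\chi_1} \to \Delta_W^{\chi_1\chi_2}$. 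Whether one can sidestep this computation by a pure equivariance argument depends on nontrivial rigidity of Shklyarov's product under $\WH{G}$-action, which I expect to be the principal technical hurdle.
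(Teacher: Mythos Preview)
The statement is a \emph{conjecture} in the paper, not a theorem: the paper explicitly does not prove it. Immediately after stating the conjecture, the authors write that although they have proved the left hand side equals \eqref{eq:1ghat}, they do not know whether this is isomorphic to the twisted Jacobian algebra defined by \cite{BTW,Shk}.

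Your proposal reproduces exactly this state of knowledge. Your first paragraph observes that Proposition \ref{prop:m2} already yields
\[
H^*(\CB\rtimes\WH{G}) \;\cong\; \Hom_{MF^{\AI}_{1\times\WH{G}}(W(y)-W(x))}(\Delta_W^{\WH{G}\times\WH{G}}, \Delta_W^{\WH{G}\times\WH{G}}),
\]
which is precisely the identification \eqref{eq:1ghat} that the paper records as known. You then correctly isolate the remaining gap: showing that this $(1\times\WH{G})$-equivariant hom, with its composition product, is \emph{algebra}-isomorphic to $\Jac'(W,\WH{G})$ equipped with Shklyarov's structure constants $\sigma_{g,h}$.

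Your proposed equivariance promotion does not close this gap, and you are candid about this (``the principal technical hurdle''). The difficulty is real: knowing that the $\WH{G}$-invariant parts of two algebras agree, and that the two algebras agree as $\WH{G}$-modules, does not by itself determine the cross-sector products. Concretely, one would have to show that the composition $\Delta_W^1 \to \Delta_W^{\chi_1} \to \Delta_W^{\chi_1\chi_2}$ in the matrix factorization category reproduces $\sigma_{\chi_1,\chi_2}$; Shklyarov's identification in Theorem \ref{thm:orbjachh} passes through the full $\WH{G}\times\WH{G}$-equivariance, and there is no shortcut available in the $(1\times\WH{G})$ setting. Your module-level sketch (identifying each summand $\Hom(\Delta_W^1,\Delta_W^\chi)$ with $\Jac(W^\chi)\cdot\xi_\chi$) is plausible but is likewise not carried out in the paper. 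In short: your proposal accurately locates the conjecture's difficulty but does not resolve it --- nor does the paper.
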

Although we have proved that left hand side equals  \eqref{eq:1ghat}, we do not know whether this is isomorphic to twisted Jacobian algebra defined by \cite{BTW,Shk}.

\section{Kodaira-Spencer map}\label{sec:KS}
In this section, we construct a general Kodaira-Spencer map from small quantum cohomology $QH^*(M)$ to a cohomology
of an $\AI$-algebra, which is isomorphic to Jacobian ring of $W$ for a non-equivariant case and
orbifold Jacobian algebra of $(W,\WH{G})$ for an equivariant case.
The construction should extend to the case of big quantum cohomology using bulk-deformations by following \cite{FOOO_MS}
but we omit it for simplicity.

Recall that Fukaya-Oh-Ohta-Ono\cite{FOOO_MS} defined a geometric map $\mathfrak{ks}:QH^*(M) \to \Jac(W)$ for general toric manifolds
(with bulk-deformations) and showed that the map is a ring isomorphism. 
Such a construction was generalized to the case of orbi-sphere $\mathbb{P}^1_{a,b,c}$ \cite{ACHL}.
The former construction uses $T^n$-action in an essential way. For example, for the Fukaya algebra $\mathcal{A}(L)$ of a Lagrangian torus $L$, it uses the observation that the output of the closed-open map (using
a $J$-holomorphic disc with one interior input and one boundary output) $$\mathcal{CO}_0: QH^*(M) \to H^*(\mathcal{A}(L))_{alg}$$
is always a multiple of the unit $\be = [L]$ from $T^n$-action. In this case, we can just read the coefficient of the unit
with boundary Maurer-Cartan deformation and this gives an element of the algebra $\Jac(W)$.  The construction of \cite{ACHL} is similar, but uses  $\Z/2$-action (and low dimensionality of $L$) instead of $T^n$-action and show that $\mathcal{CO}_0$ always maps to the multiple of $\be$.

But in general, the image of the map $\mathcal{CO}_0$ is not expected to be a multiple of $\be$. For example, its virtual dimension is
$n + \mu(\beta)-\deg_M(A)$ which is not necessarily $n = \dim(L)$.

Our simple but important idea is that we replace $\Jac(W)$ by its associated Koszul complex. Also, we find
a mild assumption \ref{as1} with which the $\AI$-algebra $(\mathcal{B}, m_1^b)$ from $L$ defines such a Koszul complex.
In fact, $\mathcal{CO}_0$ naturally has an output in $\mathcal{B}$ (see Figure \ref{disk} (a)) by decorating $L$ with bounding cochain $b$.
Here, we work in the setting of localized mirror in the sense that we only look at the part of the mirror given by the formal neighborhood of
the reference Lagrangian $\bL$.
\begin{theorem}\label{thm:ks}
Let $\mathcal{A}(L)$ be Fukaya $\AI$-algebra of Lagrangian $L$, with bounding cochain $b$
and potential function $W$. Denote by $\mathcal{B}(L)$ another $\AI$-algebra defined in Definition \ref{def:B}. 
There is a Kodaira-Spencer map which is a ring homomorphism
$$\mathfrak{ks}:QH^*(M,\Lambda) \to H^*(\mathcal{B}(L))_{alg}.$$
In particular    for $f_1,f_2 \in QH^*(M)$, we have
$$\ks(f_1 \ast_Q f_2) = (-1)^{\deg \ks(f_1)} m_2(\ks(f_1), \ks(f_2)).$$
Under the Assumption \ref{as1}, the target is isomorphic to $\Jac(W_L)$ by Proposition \ref{prop:bjac}.
\end{theorem}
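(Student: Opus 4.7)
The plan is to construct $\ks$ as the one-pointed closed-open map decorated by the bounding cochain $b$ and then verify the two algebraic properties (chain map and ring homomorphism) via the usual FOOO-style analysis of moduli space boundaries.

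\textbf{Construction of $\ks$.} For $\alpha \in QH^*(M,\Lambda)$ represented by a cycle in $M$, consider the moduli space $\mathcal{M}_{k+1,1}(L;\beta)$ of stable $J$-holomorphic discs with boundary on $L$ in class $\beta$, with one interior marked point and $k+1$ boundary marked points (one output, $k$ inputs). Insert $\alpha$ at the interior marked point, a copy of $b$ at each of the $k$ inputs, and define
$$\ks(\alpha) := \sum_{\beta,k} T^{\omega(\beta)/2\pi} q_{k,\beta}(\alpha; b,\dots,b) \ \in \ \mathcal{B}(L),$$
where $q_{k,\beta}$ is the standard closed-open operator obtained by (virtual) evaluation at the output boundary marked point. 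This is the same operator underlying Fukaya--Oh--Ohta--Ono's $\mathfrak{ks}$, but we retain the full output in $V \otimes_{\Lambda_0} R$ instead of projecting onto a multiple of the unit. The gapped filtration of $\mathcal{A}(L)$ ensures convergence in $\mathcal{B}(L)$.

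\textbf{Chain map property.} I would analyze the codimension-one boundary of the one-dimensional component of $\mathcal{M}_{k+1,1}(L;\beta)$ with the same decorations. Three types of strata appear: (i) disc bubbling at the output, which produces $m_1^b$ applied to a lower-level closed-open contribution; (ii) disc bubbling away from the output, which combined with the weak MC equation $m(e^b) = W(b)\,\be$ contributes a multiple of the unit paired into an output, and since the closed-open output is already weighted by $e^b$ such contributions cancel by unitality; (iii) sphere bubbling at the interior marked point, which recovers a quantum differential applied to $\alpha$. Summing these and using that $\alpha$ is closed (and that $W(b)\be$ paired into an $m_2^b$ with the output gives zero by unitality when the disc bubble detaches at the output marked point) yields $m_1^b(\ks(\alpha)) = 0$. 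Thus $\ks$ descends to cohomology.

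\textbf{Ring homomorphism property.} For $f_1,f_2 \in QH^*(M)$, consider the moduli space $\mathcal{M}_{k+1,2}(L;\beta)$ with two interior marked points (carrying $f_1,f_2$) and one boundary output, with $b$-decorations on all other boundary points. This moduli space is a one-parameter family (over the relative position of the two interior points modulo the automorphisms). The codimension-one boundary decomposes into: (a) the two interior points coming together and bubbling off a sphere, which produces the quantum product $\ks(f_1 \ast_Q f_2)$; (b) one interior point going to the boundary, splitting the disc into two discs joined at a boundary node, which yields $m_2^b(\ks(f_1),\ks(f_2))$ (up to the dg-sign $(-1)^{\deg \ks(f_1)}$ from Remark \ref{rem:dgmodification}); (c) further disc/sphere bubbles absorbed by MC / unit arguments as in the previous step. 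Equating the two sides gives the claimed multiplicativity formula in $H^*(\mathcal{B}(L))_{alg}$.

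\textbf{Final identification and main obstacle.} Applying Proposition \ref{prop:bjac} under Assumption \ref{as1} identifies the target $H^*(\mathcal{B}(L))_{alg}$ with $\Jac(W_L)$ as algebras, completing the statement. The main obstacles are: (1) the careful bookkeeping of Koszul/dg-signs so that the ring homomorphism formula emerges with the sign $(-1)^{\deg \ks(f_1)}$ predicted by the dg-convention; and (2) the virtual perturbation / Kuranishi structure compatibility between the one-pointed and two-pointed closed-open moduli spaces so that the $b$-decorated operators assemble into a well-defined $\AI$-bimodule homomorphism from $QC^*(M)$ (as an $\AI$-bimodule over itself) into $\mathcal{B}(L)$. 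Both issues are standard in the FOOO framework; the novelty here is simply that we do not project the output onto $\be$, which requires no new perturbation data but demands that the chain-level identities be read at the level of $\mathcal{B}(L)$ rather than its unit summand.
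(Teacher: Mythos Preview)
Your proposal is correct and takes essentially the same approach as the paper: construct $\ks$ as the $b$-decorated one-pointed closed-open map, then prove multiplicativity by a cobordism in $\mathcal{M}^{main}_{k+1,2}$ whose ends are the sphere-bubble configuration (giving $\ks(f_1\ast_Q f_2)$) and the two-disc-bubble configuration (giving $m_2^b(\ks(f_1),\ks(f_2))$), with the sign handled separately. One minor correction to your description of step~(b): the degeneration producing $m_2^b$ is not ``one interior point going to the boundary'' but rather the nodal curve $\Sigma_{12}$ in which the main component carries the output and two boundary nodes, with \emph{each} interior marked point sitting on its own disc bubble; the paper makes this precise via the forgetful map $\mathfrak{forget}\colon\mathcal{M}^{main}_{k+1,2}\to\mathcal{M}^{main}_{1,2}$ and a path from $\Sigma_0$ to $\Sigma_{12}$.
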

\begin{figure}
\includegraphics[height=1.3in]{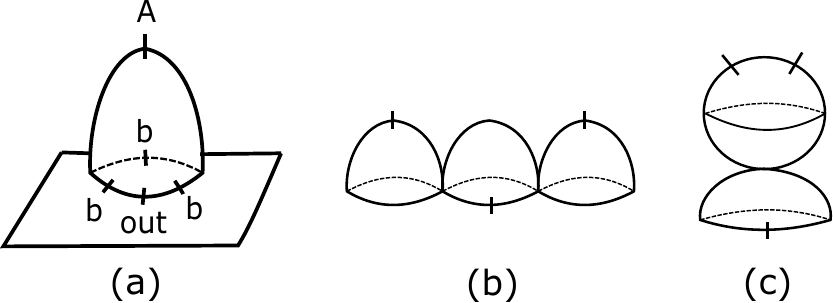}
\caption{(a) Kodaira-Spencer map $\ks(A)$, (b) $\Sigma_{12}$, (c) $\Sigma_0$}
\label{disk}
\end{figure}
Most of the construction of Fukaya-Oh-Ohta-Ono\cite{FOOO_MS}, and its modifications \cite{ACHL}  carries over to this setting
and we give a sketch of proof later in the section (explaining which parts of \cite{FOOO_MS} and \cite{ACHL} have to be modified).

Let us explain an equivariant version of  Kodaira-Spencer map. Let $G$ be a finite abelian group and
 $M$ be a symplectic manifold with an effective $G$-action. Fukaya category of $M$ can be given as a filtered $\Z/2$-graded unital $\AI$-category
 with a strict $G$-action (using de Rham version of the work of Fukaya-Oh-Ohta-Ono but any other technical setting will be okay).
 Therefore, we can apply the construction of Section 5.
Suppose the Lagrangian $\OL{L} \subset [M/G]$ satisfies the assumptions of Section 5 so that
it has an $\AI$-algebra $\mathcal{A}(\OL{L})$ as well as the $\AI$-category $\WT{\mathcal{A}}$ whose
objects are $|G|$ embedded lifts of $\OL{L}$. $b = \sum x_i X_i$ satisfies a Maurer-Cartan equation for $\mathcal{A}(\OL{L})$.
\begin{theorem}\label{thm:ks2}
There is a  Kodaira-Spencer map which is a ring homomorphism
\begin{equation}\label{eq:ksb}
\mathfrak{ks}:QH^*(M,\Lambda) \to H^*\big(( \mathcal{B}(\OL{L}) \rtimes \WH{G})^{\WH{G}}\big)_{alg}.
\end{equation}
Under the Assumption \ref{as2}, the target of $\ks$ is isomorphic to the orbifold Jacobian algebra $\Jac(W_{\OL{L}}, \WH{G})$ by Theorem \ref{thm:main}.
\end{theorem}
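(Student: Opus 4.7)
The plan is to reduce Theorem \ref{thm:ks2} to the non-equivariant Theorem \ref{thm:ks} applied to the upstairs (possibly disconnected) Lagrangian $\WT{L} = \bigsqcup_{g \in G} gL \subset M$, then transport via the $\AI$-isomorphism $\Phi$ of Lemma \ref{lem:isobr}, and finally verify that the image lies in the $\WH{G}$-invariant $\AI$-subalgebra of $\CB(\OL{L}) \rtimes \WH{G}$.

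First, since $\Phi$ is an $\AI$-isomorphism sending $b = \sum_i x_i X_i \otimes 1$ to $\WT{b} = \sum_i x_i \big(\sum_{g \in G} g X_i\big)$, and since $b$ is weakly unobstructed with potential $W_{\OL{L}}$ for $\CA(\OL{L}) \rtimes \WH{G}$, so is $\WT{b}$ with the same potential for $hom_{\mathcal{C}}(\WT{L}, \WT{L})$. Applying Theorem \ref{thm:ks} to the pair $(M, \WT{L})$ with bounding cochain $\WT{b}$ yields a ring homomorphism
\[
\ks_{\WT{L}} : QH^*(M, \Lambda) \to H^*\big(hom_{\mathcal{C}}(\WT{L}, \WT{L}) \otimes R, m_1^{\WT{b}}\big)_{alg},
\]
defined by the usual closed-open count of $J$-holomorphic discs with boundary on $\WT{L}$, one interior marked point evaluated on a representative of $\alpha$, boundary Maurer-Cartan insertions of $\WT{b}$, and one boundary output marked point. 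The chain-map and representative-independence arguments and the Cardy-type degeneration proving the ring homomorphism identity are imported verbatim from \cite{FOOO_MS,ACHL}, as are the analytic inputs (virtual perturbations, transversality, compactness). Transport through $\Phi$ then gives a candidate ring homomorphism $\ks : QH^*(M, \Lambda) \to H^*(\CB(\OL{L}) \rtimes \WH{G})_{alg}$.

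The crux is to show the image of $\ks$ sits inside the $\WH{G}$-invariant subspace for the second $\WH{G}$-action of Definition \ref{def:hatac}, so that it factors through $H^*\big((\CB(\OL{L}) \rtimes \WH{G})^{\WH{G}}\big)_{alg}$. This reduces to a combinatorial match between the disc-counting data and the algebraic invariance condition. On the $hom_{\mathcal{C}}(\WT{L}, \WT{L})$ side, any contributing disc whose Maurer-Cartan insertions spell the monomial $x^{\vec{a}} = \prod_i x_i^{a_i}$ must, by the very shape of $\WT{b}$ (each summand $g X_i$ being a corner from $gL$ to $gg_i L$), have boundary source and target lifts $g_1 L, g_2 L$ constrained by $g_1^{-1} g_2 = \prod_i g_i^{a_i}$ up to orientation convention. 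Pulling this back through $\Phi$ one recognises exactly the condition $g^{\vec{a}} = g$ from Definition \ref{def:hatac} under which a pure tensor $x^{\vec{a}} v_g \otimes \eta$ is $\WH{G}$-invariant; a parallel $G$-equivariance argument, using a $G$-equivariant $J$ together with the $G$-symmetry of $\WT{b}$, organises the contributions into honest $\WH{G}$-invariant cochains rather than merely landing in the correctly graded pieces. Notably, $\alpha \in QH^*(M, \Lambda)$ need not itself be $G$-invariant: its $G$-character merely selects which $\eta$-sector of $\CB(\OL{L}) \rtimes \WH{G}$ receives the image.

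The main obstacle is precisely this bookkeeping: two distinct $\WH{G}$-actions are in play — the first of Definition \ref{def:hatac0} which builds the semi-direct product and underlies $\Phi$, and the second of Definition \ref{def:hatac} for which we seek invariance — and one must carefully verify that the corner-counting constraint translates under $\Phi$ to exactly the invariance condition of Definition \ref{def:hatac}, being attentive to conventions for sources versus targets of morphisms. Once this is in hand, Theorem \ref{thm:main} supplies the identification of the target with $\Jac(W_{\OL{L}}, \WH{G})$ under Assumption \ref{as2}, completing the proof.
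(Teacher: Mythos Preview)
Your proposal is correct and follows essentially the same approach as the paper: apply the non-equivariant construction to the upstairs Lagrangian $\WT{L}$ with bounding cochain $\WT{b}$, transport via the $\AI$-isomorphism $\Phi$ of Lemma \ref{lem:isobr}, and then verify $\WH{G}$-invariance by the branch-telescoping argument (the paper's Lemma in the proof of Theorem \ref{thm:ks2}), together with the observation that the $G$-character of $A$ selects the $\chi$-sector of the image. The paper separates the ``which sector'' argument (via $G$-equivariant perturbations) from the ``invariance within sector'' argument (the boundary-branch combinatorics) slightly more explicitly than you do, but the content is the same.
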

\begin{remark}
If we restrict the above $\ks$ map to the $G$-invariant part ($\chi =1$ case), it may be regarded as a map from untwisted sector of the quotient orbifold $H^*([M/G])$ to  $H^*\big( \mathcal{B}(\OL{L})^{\WH{G}} , m_1^b\big)$. 
This should be a part of orbifold version of
$$\mathfrak{ks}:QH^*_{orb}([M/G],\Lambda) \to H^*( \mathcal{B}(\OL{L}) )_{alg}.$$
 We can define the map when the input is a fundamental class of a twisted sector following \cite{ACHL}, but there is a technical difficulty to define it in general cases. We will not consider this map in this paper.
\end{remark}
Now, let us prove Theorem \ref{thm:ks} and Theorem \ref{thm:ks2}.

\subsection{Proof of Theorem \ref{thm:ks}}
We will mainly follow the construction of \cite{FOOO_MS}, \cite{ACHL}.
We construct a Kodaira-Spencer map using $J$-holomorphic discs as in Figure \ref{disk}.
As explained before, the main difference in our case is that we do not need to make an output
a multiple of the fundamental class. This is why we can consider a general case. $T^n$-action of \cite{FOOO_MS} or $\Z/2$-symmetry
of \cite{ACHL} was used to achieve this. But we still need the Lagrangian $L$ to be weakly unobstructed.
After this adjustment, it is almost straightforward to adapt the proof of \cite{FOOO_MS}, \cite{ACHL} to our setup, and
we only give a brief sketch, and refer readers to these references for the full construction. 
%
Also we are considering a general case where $QH^*(M)$ may have odd degree elements (like $T^2$), our formula involves
a sign. We explain the sign computation at the end.

For a moduli space of $J$-holomorphic discs $\mathcal{M}_{k+1,1}(\beta)$ for $\beta \in H_2(M,L)$ and a cycle $A \subset M$,
we define a Kodaira-Spencer map as follows.
Let $\mathcal{M}_{k+1,1}(\beta,A) = \mathcal{M}_{k+1,1}(\beta) \times_M A$ and consider their evaluation maps $ev_i^k:\mathcal{M}_{k+1,1}(\beta,A) \to L$ at $i$-th marked point.
We define
\begin{equation}\label{eq:defks}
\ks (PD[A]) = \sum_{k=0}^\infty (ev_0^k)_! ((ev_1^k)^*b \wedge \cdots \wedge (ev_k^k)^*b)
\end{equation}

From now on, we omit Poincar\'e dual from  $PD[A]$ and just write $A$.
Gromov compactness shows that $\ks(A) \in \mathcal{B}$.
We first check that this map is well-defined in cohomology.
\begin{lemma}[c.f. Proposition 2.4.15 \cite{FOOO_MS}]\label{lem:ksc}
If $A -B = \partial R$, then $\ks(A) - \ks(B) = m_{1,can}^b H$.
\end{lemma}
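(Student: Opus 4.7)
My plan is to mimic the standard cobordism/Stokes argument of Fukaya--Oh--Ohta--Ono \cite{FOOO_MS} and \cite{ACHL}, adapted to our setting where the output of $\ks$ is allowed to land anywhere in $\mathcal{B}(L)$ rather than only in a multiple of $\be$. Because the formula \eqref{eq:defks} is essentially identical to theirs except for not projecting onto $\Lambda \cdot \be$, the main new content is cosmetic, and the MC-formalism of Section~\ref{sec:mc} absorbs the rest.

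The first step is to promote the chain $R$ with $\partial R = A - B$ to a homotopy at the level of moduli spaces by setting
\[
H \;:=\; \sum_{k,\beta} (ev_0^k)_! \bigl( (ev_1^k)^* b \wedge \cdots \wedge (ev_k^k)^* b \bigr) \;\in\; \mathcal{B}(L),
\]
where the evaluation maps are on $\mathcal{M}_{k+1,1}(\beta,R) := \mathcal{M}_{k+1,1}(\beta) \times_M R$. The energy filtration together with Gromov compactness guarantees convergence in the $T$-adic topology. I would then apply Stokes' theorem to $H$ and enumerate the codimension-one boundary of $\overline{\mathcal{M}}_{k+1,1}(\beta,R)$: (i) the interior marked point lies on $\partial R = A - B$; (ii) a disc bubble splits off carrying the output marked point $ev_0$; (iii) a disc bubble splits off at one of the interior $b$-insertion marked points (the output $ev_0$ staying on the main component).

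Stratum (i) immediately assembles into $\ks(A) - \ks(B)$. Summing stratum (ii) over all $k$, all $\beta$, and all distributions of $b$-insertions between the two components recovers exactly the expression $m_1^b(H)$ as defined in \eqref{eq:dd}. Stratum (iii) gives contributions in which the bubble acts as $m^b_0 = W(b)\cdot\be$ at some position of the main disc; by the weak Maurer--Cartan equation combined with the unital property ($m_k(\dots,\be,\dots) = 0$ for $k \geq 3$), all such terms either vanish outright or cancel pairwise against residual $k=2$ insertions, as in Proposition~2.4.15 of \cite{FOOO_MS}. After transferring from $\mathcal{A}(L)\otimes R$ to the canonical model via the contraction homotopy of Section~\ref{sec:mc}, the same identity reads $\ks(A) - \ks(B) = m_{1,\mathrm{can}}^b H$, modulo standard sign conventions.

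The main obstacle will be a careful sign check in stratum (iii), because $QH^*(M)$ is allowed to carry odd-degree classes (unlike in \cite{FOOO_MS} where $X$ is toric and the involved classes are even) and because the weak-MC cancellation involves $W(b)$, which is even of degree $2$ but multiplies classes of mixed parity. The required orientations on $\overline{\mathcal{M}}_{k+1,1}(\beta,R)$ and on the products defining $m_2^b$ should nevertheless reproduce those in Section~2 of \cite{ACHL}; the promised sign computation at the end of the section refers precisely to this bookkeeping.
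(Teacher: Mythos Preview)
Your proposal is correct and follows essentially the same approach as the paper's own proof: define $H=\ks(R)$, enumerate the codimension-one boundary of $\overline{\mathcal{M}}_{k+1,1}(\beta,R)$ into the $\partial R$-stratum, the bubble carrying $z_0$ (giving $m_1^b(H)$), and the bubble carrying only $b$'s (giving an $m_0^b=W(b)\be$ insertion that vanishes by the unit property), and then project to the canonical model via the chain map $\Pi^b$. Two minor remarks: your phrase ``interior $b$-insertion marked points'' is slightly confusing since the $b$'s sit at \emph{boundary} marked points, and the vanishing in stratum~(iii) comes directly from the unitality of the $q$-operators (inserting $\be$ at any boundary slot of the one-interior-marking operation is zero), so no ``pairwise cancellation against $k=2$ insertions'' is needed.
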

\begin{proof}
We define $H:=\ks(R)$, and consider its codimension one boundary contributions.
One of the strata is $\ks(\partial R)$ which corresponds to $\ks(A) - \ks(B)$. The others come from disc bubbling.
If the boundary marked point $z_0$ and interior marked point $z^+_1$ are
in the same component, then the contribution from the bubble component is $m_0^b$ which is a multiple of unit.
Therefore such contribution vanishes by the usual  argument. The other case that $z_0$ lies in a different disc component than $z_1^+$,
then it can be written as $m_1^b(\ks(R))$. We may take the projection to cohomology. Namely,
we have a quasi-isomorphism from $(\mathcal{B}, \{m_k^b\})$ to $\mathcal{B}_{can}, \{m_{k,can}^b\}$
and denote its linear component (part with one input) by $\prod^b$. We have
$\prod^b \circ m_1^b = m_{1,can}^b \prod^b$ and hence we obtain the result. 
\end{proof}

Now, let us show that $\ks$ is a ring homomorphism. The geometric idea behind this is rather well-known.  In the language of 2-dimensional open-closed topological conformal field theory, the closed-open map is a ring homomorphism. But the actual proof of this in \cite{FOOO_MS} becomes
rather technical. One of the issue there is that $T^n$-equivariance is essential to define $\ks$-map, but the moduli space of $J$-holomorphic spheres does not have $T^n$-equivariant perturbation so they need to find a way to accommodate both. In \cite{ACHL}, a simplified construction without such $T^n$-equivariance is given.  We follow closely the construction in \cite{ACHL}. The main simplification is that
we can use a uniform Kuranishi perturbation scheme, namely,  a component-wise compatible continuous family of multi-section by Fukaya \cite{F} for every moduli spaces involved.

Consider the map 
$$\mathfrak{forget}: \mathcal{M}^{main}_{k+1,2}(\beta, A \otimes B)  \to \mathcal{M}^{main}_{1,2}$$
of forgetting the map and the boundary marked points except the first one.
We are interested in $\mathfrak{forget}^{-1}(\Sigma_0), \mathfrak{forget}^{-1}(\Sigma_{12})$ where
$\Sigma_0$ is  a stable disc with a sphere attached at the interior, and $\Sigma_{12}$ is a disc with two disc bubble each of which
contains an interior marked point (See Figure \ref{disk}).
The fiber product
\begin{equation}\label{eq:0q}
\mathfrak{forget}^{-1}(\Sigma) \;_{(ev_1^+, ev_2^+)} \times_{(M \times M)} (f_1 \times f_2)
\end{equation}
For $\Sigma = \Sigma_0$, the above expression  gives $\ks(f_1 \ast_Q f_2)$, and
for $\Sigma = \Sigma_{12}$, the above expression  gives $(-1)^{\deg \ks(f_1)} m_2(\ks(f_1), \ks(f_2))$.
For an interval $I \subset  \mathcal{M}^{main}_{1,2}$ connecting $\Sigma_0, \Sigma_{12}$, \eqref{eq:0q} with $\Sigma=I$
should give us the cobordism connecting these two operations. The rest of the proof is the same as \cite[Section 6.2]{ACHL} 
and \cite{FOOO_MS} and
we omit the details. But the sign $(-1)^{\deg \ks(f_1)}$ did not appear previously due to even dimensionality of cohomology classes, and we will explain this later in the section.

\subsection{Proof of Theorem \ref{thm:ks2}}
We label one of the embedded lift of $\OL{L}$ as $\WT{L}_1$ and label the rest of the lift by
 $\WT{L}_g = g \cdot (\WT{L}_1)$. Denote by $\WT{L} = \bigoplus_{g\in G} \WT{L}_g$.
 Th idea is to consider the moduli space of $J$-holomorphic stable polygon in $M$ (similar to Figure \ref{disk}) with interior insertion of $A$
 with boundary on $\WT{L}$. We write it as $\mathcal{M}_{k+1,1}(\beta,A)$ for $\beta  \in H_2(M, \WT{L})$. 
From Lemma \eqref{lem:isobr}, we have a $G$-equivariant $\AI$-isomorphism between 
\[ \big( \CB \rtimes \HG, \{m_k^{b\otimes 1}\}\big) \simeq \big(  CF(\WT{L},\WT{L}) \otimes  \Lambda[x_1,\cdots,x_n], \{m_k^{\WT{b}}\} \big).\]
%
Thus we can define $\ks$ following \eqref{eq:defks} as a map to the cohomology of the right hand side.
We can prove that $\ks$ is well-defined in cohomology following Lemma \ref{lem:ksc}.

Now, $G$ acts on the moduli space $\mathcal{M}_{k+1,0}(\beta)$ freely
and  we  consider a $G$-equivariant Kuranishi structure and perturbations. This defines a Fukaya $\AI$-algebra of Lagrangians
$CF(\WT{L},\WT{L})$ with a strict $G$-action on it, and its $G$-invariant part defines an $\AI$-algebra of $\OL{L}$.
Suppose that ambient cycle $A$ is $G$-invariant. 
then moduli space for the Kodaira-Spencer map $\mathcal{M}_{k+1,1}(\beta,A)$ has a $G$-action on it, and the output of $\ks(A)$ become $G$-invariant as well. By $\Phi^{-1}$, it corresponds to an element in  the trivial sector $\mathcal{B} \otimes 1$.
In terms of $\Jac(W,\WH{G})$, for a $G$-invariant cycle $A$, $\ks(A)$ should lie in the trivial sector of $\Jac(W,\WH{G})$
which is $\Jac(W)^{\WH{G}}$.   Since $H^*([M/G],\Lambda) \cong H^*(M,\Lambda)^G$ (we consider $\Lambda$ with
$\C$-coefficient as usual) $A$ may be considered as an element of $H^*([M/G])$. 
Therefore, this moduli space gives two associated maps, one from $QH^*(M)$ and  one from $H^*([M/G])$.

Now, let us consider more interesting case that $A$ is not  $G$-invariant. Roughly $\ks(A)$ will be in non-trivial  sectors of $\Jac(W,\WH{G})$.
From finite abelian $G$-action, we can decompose $H^*(M)$ according to each character $\chi \in \WH{G}$.
Namely if $A$ is in $\chi$-eigenspace, $g \cdot A = \chi(g) A$. By considering $G$-equivariant perturbations of
the moduli space $$\bigcup_{g \in G} \mathcal{M}(g \cdot \beta, g \cdot A),$$ we observe that $\ks(A)$ lies in $\chi$-eigenspace as well.
Namely,  we have
$$ \Phi^{-1} (\ks(A)) \in \mathcal{B} \otimes \chi.$$
Now, we consider the second  $\WH{G}$-action on $\mathcal{B}$ (see Definition \ref{def:hatac}).
\begin{lemma}
We claim that  $$\Phi^{-1} (\ks(A)) \in (\mathcal{B} \otimes \chi )^{\WH{G}}.$$
\end{lemma}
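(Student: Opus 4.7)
The plan is to establish $\WH{G}$-invariance, as the containment $\Phi^{-1}(\ks(A))\in \mathcal{B}\otimes\chi$ has been established in the preceding paragraph via $G$-equivariant perturbations on $\bigcup_{g\in G}\mathcal{M}(g\cdot\beta, g\cdot A)$. What remains is invariance under the second $\WH{G}$-action of Definition~\ref{def:hatac}. I would carry this out in the upstairs model $\big(\hom_{\mathcal{C}}(\WT{L},\WT{L})\otimes R, m_k^{\WT{b}}\big)$, where by Lemma~\ref{lem:isobr} the map $\Phi$ furnishes an $\AI$-isomorphism with $\mathcal{B}\rtimes\WH{G}$, and where $\ks(A)$ has the Floer-theoretic description $\sum_{k\geq 0} m_{k+1}^A(\WT{b},\ldots,\WT{b})$ with $A$ inserted at the interior marked point.

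First I would transport the second $\WH{G}$-action along $\Phi$ to an action on $\hom_{\mathcal{C}}(\WT{L},\WT{L})\otimes R$. Using the formula $\Phi_1(X_i\otimes\eta)=\sum_{g\in G}\eta(g^{-1})g\cdot X_i$ together with $\chi'\cdot(X_i\otimes\eta)=\chi'(g_i^{-1})X_i\otimes\eta$, a direct calculation shows that the transported action sends $g\cdot X_i$ to $\chi'(g_i^{-1})g\cdot X_i$ on every branch $g\in G$, simultaneously with $x_i\mapsto\chi'(g_i)x_i$. The key point is that the transported scalar on $g\cdot X_i$ depends only on the jump index $g_i$, not on the branch label $g$. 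Consequently each summand $x_i\sum_{g} g\cdot X_i$ of $\WT{b}$ is invariant, so $\WT{b}$ itself is $\WH{G}$-invariant.

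Next, the same branch/composition accounting as in Proposition~\ref{ghatequivariance} shows that $m_k(g_1\cdot X_{i_1},\ldots,g_k\cdot X_{i_k})$ lies in a $\Hom$-space whose total jump is $g_{i_1}\cdots g_{i_k}$. Hence the transported second action multiplies the output of such an $m_k$ by $\prod_j\chi'(g_{i_j}^{-1})$, exactly matching the product of scalars on the inputs; that is, the Floer $\AI$-operations are $\WH{G}$-equivariant. The closed insertion of $A$ at the interior marked point is untouched by the second $\WH{G}$-action, which is an algebraic symmetry of the mirror variables rather than a geometric action on $M$ or on cycles in $M$, so the $A$-decorated operations $m_k^A$ remain $\WH{G}$-equivariant.

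Combining these observations, $\ks(A)=\sum_k m_{k+1}^A(\WT{b},\ldots,\WT{b})$ is a sum of $\WH{G}$-equivariant operations evaluated on $\WH{G}$-invariant inputs, hence is $\WH{G}$-invariant; together with the already-established $\Phi^{-1}(\ks(A))\in\mathcal{B}\otimes\chi$, this gives $\Phi^{-1}(\ks(A))\in(\mathcal{B}\otimes\chi)^{\WH{G}}$. The main book-keeping obstacle is the explicit identification of the transported second action on $\hom_{\mathcal{C}}(\WT{L},\WT{L})\otimes R$; once one checks that the scalar on $g\cdot X_i$ depends only on the jump index, the rest is a routine application of the $\AI$-equivariance framework already developed in Section~\ref{sec:eqb}.
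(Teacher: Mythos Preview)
Your proposal is correct and follows essentially the same approach as the paper. The paper's proof tracks a single $J$-holomorphic polygon with boundary branches $L_{g_0},\ldots,L_{g_k}$ and observes that the $\chi'$-action on the variables telescopes to $\chi'(g_0^{-1}g_k)$ while the action on the output in $CF(L_{g_0},L_{g_k})$ gives $\chi'((g_0^{-1}g_k)^{-1})$, so the two cancel; your version packages this same telescoping computation as ``$\WT{b}$ is invariant'' together with ``$m_k^A$ is equivariant under the transported action,'' which is just a slightly more structural phrasing of the identical branch-jump bookkeeping.
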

\begin{proof}
Recall that $\WH{G}$-action acts on both Lagrangian Floer generators (by recording difference of branches of $\WT{L}$) and on variables (from its action on the associated immersed sector at the quotient), but not on $\chi$.
Now, $\WH{G}$-action on $\ks(A)$ come from two sources, one is the variables corresponding to $b$'s, and the other is the output Floer generator in $CF(\WT{L},\WT{L})$.
Given a $J$-holomorphic polygon $u$ with $k+1$ boundary marked points,
the boundary is given by $\WT{L}$ contributing to $\ks(A)$. Let us choose $g_0, \cdots, g_k \in G$ such that
$j$-th marked point $z_j$ maps to the Lagrangian intersection $L_{g_i} \cap L_{g_{i+1}}$ for $i=0,\cdots,k \mod k$.
At $j$-th marked point, the next branch of lagrangian is obtained by the multiplication of $g_i^{-1}g_{i+1} \in G$
of the previous branch (in a counter-clockwise order), hence $\chi \in \WH{G}$ action on the
dual variable will give $\chi ( g_i^{-1}g_{i+1})$ for each $i =0,1,\cdots, k-1$. 
Hence $\chi$-action on resulting variables on $\ks(A)$ is 
$$\chi ( g_0^{-1}g_{1}) \cdots \chi ( g_{k-1}^{-1}g_{k}) = \chi (g_0^{-1} g_k).$$
On the other hand, the output of $\ks(A)$ lies in $CF(L_{g_0}, L_{g_k})$ and
by Definition \ref{def:hatac}, $\chi$ action on the output is given by the multiplication of $\chi\big( (g_0^{-1} g_k)^{-1}\big)$.
Therefore, these two actions cancel each other out and $\ks(A)$ is  invariant under this additional $\WH{G}$-action.
\end{proof}

\subsection{Sign computation}
Let us explain the sign in the theorem \ref{thm:ks}.
$$\ks(f_1 \ast_Q f_2) = (-1)^{\deg \ks(f_1)} m_2(\ks(f_1), \ks(f_2))$$
One can check $\deg_L (\ks(f)) = \deg_M (f)$.
In \cite{FOOO_MS}, the above sign did not appear since non-trivial cohomology classes of toric manifolds 
(which are $(\C^*)^n$-invariant) are of even degree.

Note that \eqref{eq:0q} can be identified with (by orientation formulae of \cite[Chapter 8]{FOOO})
$$\big(\mathfrak{forget}^{-1}(\Sigma_0) \;_{ev_1^+} \times_M f_1 \big) \;_{ev_2^+} \times_M f_2$$
 
Let us consider the sign for $\mathfrak{forget}^{-1}(\Sigma_{12})$. We start by recalling the following orientation formula for disk bubbling.
\begin{prop}\cite[Proposition 8.3.3]{FOOO} We have an isomorphism
$$ \partial \mathcal{M}^{reg}_{m+1}(\beta)
= \bigcup_{\beta = \beta'+\beta''} (-1)^{(m_1-1)(m_2-1)+(n+m_1)} \mathcal{M}^{reg}_{m_1+1}(\beta') \,_{ev_1} \times_{ev_0} \mathcal{M}^{reg}_{m_2+1}(\beta'')$$
 as oriented Kuranishi spaces where $m=m_1+m_2-1$ and $n=\dim(L)$.
\end{prop}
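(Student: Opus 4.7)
The plan is to establish this orientation formula along the lines of FOOO Chapter 8. The orientation of $\mathcal{M}^{reg}_{m+1}(\beta)$ arises as a tensor product of two pieces: the orientation of the moduli of domains (a disk with $m+1$ ordered boundary marked points modulo $PSL(2,\R)$) and the orientation of the determinant line of the linearized Cauchy--Riemann operator $D\bar{\partial}_u$. The latter is canonically trivialized once $L$ is equipped with a (relative) spin structure.

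First I would set up a collar neighborhood of the boundary stratum. Given $(u_1,u_2)$ in $\mathcal{M}^{reg}_{m_1+1}(\beta')\,_{ev_1}\!\times_{ev_0}\!\mathcal{M}^{reg}_{m_2+1}(\beta'')$, the gluing parameter $s\in[0,\epsilon)$ parameterizes the outgoing normal direction to the boundary stratum, with $s=0$ the nodal configuration and outward normal $-\partial_s$. The boundary orientation is obtained by contracting the ambient orientation with this outward normal.

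Next I would track three independent pieces of sign data. (i) The excision/gluing formula for determinants of elliptic operators gives an isomorphism $\det(D\bar{\partial}_u)\cong \det(D\bar{\partial}_{u_1})\otimes\det(D\bar{\partial}_{u_2})\otimes\det(T_pL)^{-1}$; tracking the tangent space at the node contributes a factor of $(-1)^n$. (ii) The combinatorial sign from the domain moduli comes from the shuffle that reorders the marked points of the glued disk into the cyclic order inherited from the two components; counting how the $m_2-1$ non-gluing marked points of the second factor pass the $m_1-1$ non-gluing marked points of the first yields $(-1)^{(m_1-1)(m_2-1)}$. (iii) Matching the gluing parameter $s$ with the evaluation directions of the fiber product $\,_{ev_1}\!\times_{ev_0}\!$ and moving it to the correct position in the ambient orientation introduces a further factor of $(-1)^{m_1}$. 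Multiplying these gives the claimed sign $(-1)^{(m_1-1)(m_2-1)+(n+m_1)}$.

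The main obstacle is purely bookkeeping: each of the three factors above is sensitive to the precise conventions used for (a) the order of tensor factors in the determinant splitting, (b) the orientation of the domain moduli via the choice of slice for $PSL(2,\R)$, and (c) the sign of the fiber product orientation. A small change in any one of these shifts one of the exponents. The cleanest way forward is to fix all conventions in advance, verify the identity on a trivial test case (for instance both components being constant disks with a single free marked point, where everything is explicit), and then propagate via the FOOO bookkeeping lemma to conclude.
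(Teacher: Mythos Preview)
The paper does not give its own proof of this proposition: it is quoted verbatim as \cite[Proposition~8.3.3]{FOOO} and used as a black box in the sign computation of Section~\ref{sec:KS}. There is therefore nothing in the present paper to compare your argument against.

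That said, your outline is a reasonable sketch of the FOOO argument itself. The decomposition into (i) a determinant-gluing contribution, (ii) a combinatorial shuffle of boundary marked points, and (iii) the placement of the gluing parameter relative to the fiber-product orientation is the correct skeleton, and your own caveat at the end is well placed: the individual exponents you assign to each of the three pieces are highly convention-dependent, and FOOO's actual derivation distributes them somewhat differently (for instance, the $(-1)^n$ does not arise purely from the node tangent space in their bookkeeping). If you intend to reproduce the full proof rather than cite it, you would need to fix FOOO's specific conventions for the fiber-product orientation (their Convention~8.2.1) and for the orientation of $\mathcal{M}_{m+1}(\beta)$ before the attribution of each factor becomes unambiguous. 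For the purposes of this paper, however, a citation suffices.
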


By applying the above formula twice, we identify a stratum in  $\partial^2 \mathcal{M}^{reg}_{m+1}(\beta)$
(as oriented Kuranishi spaces) given by
$$ \bigcup_{\beta= \beta_1+\beta_2+\beta_3}
\big( \mathcal{M}^{reg}_{3}(\beta_1) \,_{ev_1}\times_{ev_0} \mathcal{M}^{reg}_{1}(\beta_2) \big) \,_{ev_2} \times_{ev_0} \mathcal{M}^{reg}_{1}(\beta_2)$$
There is an analogous formula with  interior marked points
$$ \bigcup_{\beta= \beta_1+\beta_2+\beta_3}
\big( \mathcal{M}^{reg}_{3}(\beta_1) \,_{ev_1}\times_{ev_0} \mathcal{M}^{reg}_{1,1}(\beta_2) \big) \,_{ev_2} \times_{ev_0} \mathcal{M}^{reg}_{1,1}(\beta_2)$$
We take a fiber product $\big(\bullet \,_{ev^+} \times_{M} f_1 \big) \,_{ev^+}\times_M f_2$ of the above from the right
and after some yoga of moving around fiber products (which we leave as an exercise, but this does not bring any additional sign mainly because $M$ is even dimensional),  we get the following
$$\bigcup_{\beta_1}\big( \mathcal{M}^{reg}_{3}(\beta_1) \,_{ev_1}\times_{ev_0} \ks(f_1) \big) \,_{ev_2} \times_{ev_0} \ks(f_2)$$
Fukaya-Oh-Ohta-Ono defined $m_2$-product to be the above with an additional sign.
$$(-1)^{\deg_L \ks(f_1)}  m_2(  \ks(f_1),  \ks(f_2))$$
Here $\deg_L \ks(f_1) = \deg_M f_1$. Now we can argue as in \cite{FOOO_MS} to find a cobordism
between $\mathfrak{forget}^{-1}(\Sigma_0)$ and $\mathfrak{forget}^{-1}(\Sigma_{12})$.

\section{Matching algebraic and geometric generators for orbifold Jacobian algebra}\label{sec:ag}
We have shown in Theorem \ref{thm:main} under the  Assumption \ref{as2} that we can use Floer theory to construct an $\AI$-algebra whose cohomology algebra is $\Jac(W,\WH{G})$. But the proof thereof does not specify the precise isomorphism. 
In this section, we will describe more precise correspondence and  identify geometric generators corresponding to the formal generators $\xi_h$ of
orbifold Jacobian algebra in Definition \ref{def:ojr}.

For this, let us recall briefly the construction of \cite{Shk}.
For a curved algebra $R_W$, and its bimodule $M$ (which means it is an $R$-bimodule whose left and right action by $W$ agree),
we have a mixed complex $(\mathcal{K}^*(R_W,M))$ whose total cohomology is Hochschild cohomology $H^*(R_W,M)$:
To define this, we set $\mathcal{K}_*(R)=  R^e\otimes k[\theta_1,\cdots,\theta_n]$ which is a mixed complex with
degree 1 differential $\delta_{\mathrm{Kos}}$ and degree $(-1)$ differential $\delta_{\mathrm{curv}}$ given by
$$ \delta_{\mathrm{Kos}}:=\sum_{i=1}^n (x_i-y_i)\partial_{\theta_i}, \delta_{\mathrm{curv}}:=\sum_{i=1}^n \nabla_i^{x \to (x,y)}(W)\cdot \theta_i $$
Also bimodule $M$ may be considered as a $R^e$-module with $x_i$ acting on the left and $y_i$ acting on the right.
Then  mixed complex $(\mathcal{K}^*(R_W,M))$ is defined as
$$\big( hom_{R^e}( \mathcal{K}_{-*}(R), M), \partial_{\mathrm{Kos}} =\delta_{\mathrm{Kos}}^\vee, 
\partial_{\mathrm{curv}}=\delta_{\mathrm{curv}}^\vee \big).$$

This mixed complex can be identified (see (4.10) \cite{Shk}) with the following 
$$\big( M[\partial_{\theta_1},\cdots,\partial_{\theta_n}], \partial_{\mathrm{Kos}}:= \sum_{i=1}^n (x_i-y_i)\partial_{\theta_i},
\partial_{\mathrm{curv}}:=-\sum_{i=1}^n \nabla_i^{x \to (x,y)}(W)\cdot \theta_i \big)$$

In the case of $M = R$ (or $R_W$),  left and right action agree and hence we may set $x_i= y_i$.
Also, note that 
$$ \nabla_i^{x \to (x,y)}(W) \mid_{x=y} = \partial_i W(x).$$
Hence,  $\mathcal{K}^*(R_W,R_W)$ computing Hochschild cohomology $H^*(R_W,R_W)$ equals
$$\big( R[\partial_{\theta_1},\cdots,\partial_{\theta_n}], \partial_{\mathrm{Kos}}=0,
\partial_{\mathrm{curv}}:=- \sum_{i=1}^n \partial_{x_i}W(x)\cdot \theta_i \big)$$
whose cohomology is $\Jac(W)$ if $W$ has isolated singularities.

For $\WH{G}$-equivariant setting, consider the semi-direct product $R_W[\WH{G}]=R_W \otimes k[\WH{G}]$
with product structure $$(r \otimes g) \cdot (r' \otimes g'):=r(g\cdot r') \otimes gg'.$$
It is an $R$-bimodule. On $R_W \otimes \chi$,   $x$  acts on the right by $\chi \cdot x$.
Hence $\chi$-sector of the complex $\mathcal{K}^*(R_W,R_W[\WH{G}])$ can be identified with 
\begin{equation}\label{eq:ek1}
\big( R[\partial_{\theta_1},\cdots,\partial_{\theta_n}], \delta_{\mathrm{Kos}}(\chi):=\sum_{i=1}^n (x_i-\chi \cdot x_i)\partial_{\theta_i}, \delta_{\mathrm{curv}}(\chi):=- \sum_{i=1}^n \nabla_i^{x \to (x,y)}(W) \mid_{y = \chi \cdot x} \cdot \theta_i \big)
\end{equation}
Let us consider the wedge degree of $R[\partial_{\theta_1},\cdots,\partial_{\theta_n}]$ (here $\partial_{\theta_i}$ has degree one).
Shklyarov showed that the total cohomology of \eqref{eq:ek1} is isomorphic to $\Jac(W \mid_{\mathrm{Fix(\chi)}})\cdot \xi_\chi$
and $1 \cdot \xi_\chi$ corresponds to a cohomology generator whose highest wedge degree term is 
$\prod_{i \in I_\chi} \partial_{\theta_i}$. 

More precisely, in \cite{Shk}, Shklyarov finds a quasi-isomorphic complex whose cohomology is generated by the above term only,
and the quasi-isomorphism (to the original complex) is given by $\exp^{tH_{W,\chi}}$. Since $H_{W,\chi}$ action lowers
the wedge degree, we obtain the above claim.

On the other hand, we can compare \eqref{eq:ek1} to the geometric setting via Assumption \ref{as2}.
Given $\big(V \otimes_{\Lambda_0}R^e, m_1^{b(x),b(y)} \big)$  (which is a matrix factorization of $W(y) - W(x)$), by restricting it to $(\chi^{-1}x,x)$, on which we have a chain complex due to \[W(x) - W(\chi^{-1}\cdot x) =0,\] we obtain the following isomorphism of the complex.
\begin{equation}\label{eq:mk2}
\big(V \otimes_{\Lambda_0}R^e, m_1^{b(x),b(y)} \big)\mid_{(\chi^{-1}x,x)} \simeq \;\; \big( R[ \partial_{\theta_1},\cdots, \partial_{\theta_n}], \sum_i (x_i-\chi^{-1} \cdot x_i) \partial_{\theta_i} + \sum_i \nabla_i^{x \to (x,y)}W \mid_{(\chi^{-1}x,x)} \cdot \theta_i\big).
\end{equation}
By change of variables $\chi^{-1} x_i \to x_i$, we have an isomorphism
\[\big(V \otimes_{\Lambda_0}R , m_1^{b(\chi^{-1} \cdot x),b(x)} \big) \simeq
\big( R[ \partial_{\theta_1},\cdots, \partial_{\theta_n}],  \sum_i (\chi \cdot x_i-x_i) \partial_{\theta_i} + \sum_i \nabla_i^{x \to (x,y)}W \mid_{y=\chi\cdot x} \cdot \theta_i\big).\]
From these observations, we obtain the following.

\begin{prop}\label{cohgenerator}
Given assumption \ref{as2},
 $H^*(\mathcal{B} \otimes \chi, m_1^{b\otimes 1})$ is isomorphic as a module to $\chi$-sector of Hochschild cohomology
 $H^*(R_W, R_W[\WH{G}])$. In this isomorphism, $\xi_\chi$ corresponds to a cohomology generator in $(\mathcal{B} \otimes \chi, m_1^{b\otimes 1} )$  whose highest wedge degree term  (in the isomorphism \eqref{eq:mk2}) is $\prod_{i \in I_\chi} (-\partial_{\theta_i})$.
 \end{prop}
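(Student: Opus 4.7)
The plan is to chain three identifications matching $(\mathcal{B}\otimes\chi, m_1^{b\otimes 1})$ with Shklyarov's $\chi$-sector of the mixed complex $\mathcal{K}^*(R_W, R_W[\WH{G}])$, namely the complex \eqref{eq:ek1}. First, by Lemma \ref{lem:twisteddiff}, the $\chi$-sector $\mathcal{B}\otimes\chi$ is isomorphic as a chain complex to $(V\otimes_{\Lambda_0}R, m_1^{b(\chi^{-1}\cdot x),b(x)})$, which is precisely the restriction of the Floer kernel $(V\otimes_{\Lambda_0}R^e, m_1^{b(x),b(y)})$ to the locus $(x,y)=(\chi^{-1}\cdot x,x)$; on this locus $W(y)-W(x)$ vanishes, so the restriction is a genuine chain complex. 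Next, Assumption \ref{as2} in the form \eqref{eq:mfkz} realizes the Floer kernel as the Koszul matrix factorization of $W(y)-W(x)$; restricting to the above locus yields the complex in \eqref{eq:mk2}, and the change of variables $\chi^{-1}\cdot x\mapsto x$ (already carried out in the excerpt) produces
\[
\bigl(R[\partial_{\theta_1},\ldots,\partial_{\theta_n}],\ \textstyle\sum_i(\chi\cdot x_i - x_i)\partial_{\theta_i} + \sum_i \nabla_i^{x\to(x,y)}W\big|_{y=\chi\cdot x}\theta_i\bigr).
\]

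Comparing with \eqref{eq:ek1}, both the Koszul piece and the curvature piece are precisely the negatives of Shklyarov's $\delta_{\mathrm{Kos}}(\chi)$ and $\delta_{\mathrm{curv}}(\chi)$. Applying the module involution $v\mapsto (-1)^{|v|}v$ (as in Lemma \ref{signchainmap}, with $|v|$ the wedge degree) intertwines the two differentials and gives a chain isomorphism from the transformed Floer complex to Shklyarov's $\chi$-sector mixed complex. Passing to cohomology yields the module identification of $H^*(\mathcal{B}\otimes\chi, m_1^{b\otimes 1})$ with the $\chi$-sector of $H^*(R_W, R_W[\WH{G}])$, and $R$-linearity of all three identifications makes this a module isomorphism.

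For the generator, I invoke Shklyarov's explicit construction (recalled in the excerpt just above the proposition): the quasi-isomorphism $\exp(tH_{W,\chi})$ sends the monomial $\prod_{i\in I_\chi}\partial_{\theta_i}$ to a cocycle representing $1\cdot\xi_\chi$ in the Shklyarov complex, and because $H_{W,\chi}$ strictly lowers the wedge degree, this monomial is exactly the highest wedge-degree component of the representative. Under the sign-rescaling $v\mapsto (-1)^{|v|}v$, the top-degree monomial of degree $d_\chi$ picks up the factor $(-1)^{d_\chi}$ and becomes $\prod_{i\in I_\chi}(-\partial_{\theta_i})$, as asserted.

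The delicate step — and the main bookkeeping hurdle — is reconciling signs through the three identifications: the variable swap between the two boundary deformations in Lemma \ref{lem:twisteddiff}, the overall sign flip between the Floer and Shklyarov differentials, and the final wedge-degree rescaling. Once these signs are aligned, both statements of the proposition are essentially read off from Shklyarov's computation.
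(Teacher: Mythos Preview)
Your argument is correct and follows essentially the same route as the paper: the proposition in the paper is not given a separate proof but is the summary of the discussion preceding it, and you have accurately reconstructed that chain of identifications (Lemma~\ref{lem:twisteddiff}, restriction of Assumption~\ref{as2} via \eqref{eq:mk2}, the change of variables, and the sign flip of Lemma~\ref{signchainmap}). Your explicit tracking of the sign $(-1)^{|I_\chi|}$ through the involution $v\mapsto(-1)^{|v|}v$ is exactly what the paper's remark following the proposition points to.
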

 \begin{remark}
 The sign $(-1)^{|I_\chi|}$ in the identification also appeared in the construction of \eqref{comparingmap}. See Lemma \ref{signchainmap}.
 \end{remark}
 
\section{Orbifold Jacobian algebra for $T^2$}\label{sec:t2}
Take $T^2$ which is obtained by identifying opposite sides of a regular hexagon. There is a $\Z/3$-action given by rotation, and
the quotient is an orbifold sphere $\mathbb{P}^1_{3,3,3}$. We can consider an immersed Lagrangian $\bL \subset \mathbb{P}^1_{3,3,3}$
equipped with a non-trivial spin structure (for the symmetry, we follow \cite{CHLnc} to equip $\bL$ with a unitary line bundle whose
holonomy is $(-1)$ which is equally distributed). This is the Seidel Lagrangian \cite{S} (see also Efimov \cite{E}).  $\bL$ lifts to embedded circles in $T^2$, which are denoted by $L_1,L_g,L_{g^2}$ where $\{1,g,g^2\}$ denote elements of $\Z/3$. See Figure \ref{fig:2torus}.

\begin{figure}
\includegraphics[height=2.5in]{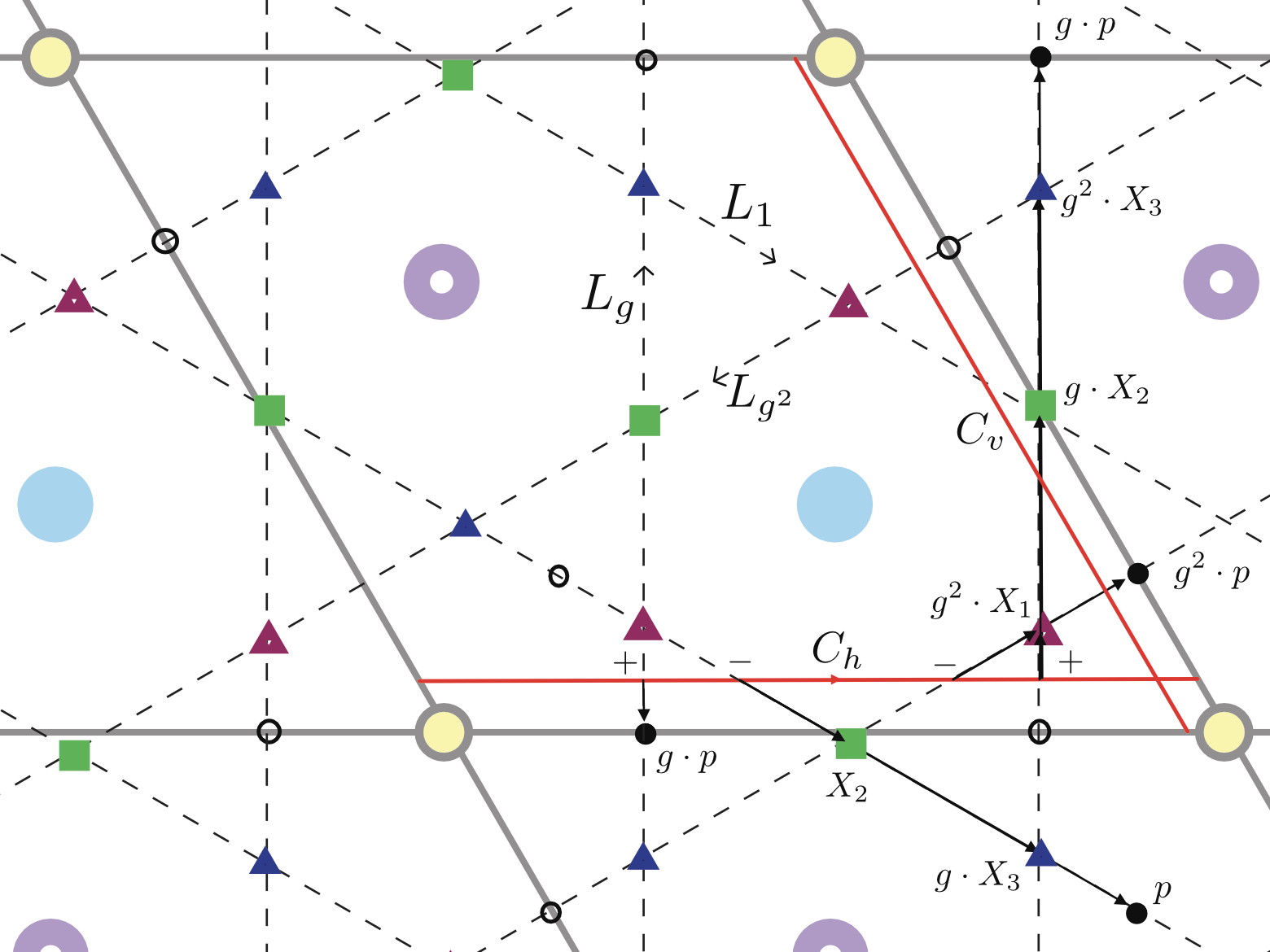}
\caption{Dotted lines are embedded Lagrangians $L_1$, $L_g$ and $L_{g^2}$. Arrows indicate how to move a point
to one of the critical points of the Morse functions.}
\label{fig:2torus}
\end{figure}

Define
$ q:=T^{\omega(\Delta)}$, where $\omega(\Delta)$ is the area of minimal triangle. In \cite{CHL},\cite{CHLnc}, $\bL$ is shown to be weakly unobstructed and the potential $W$ of $\bL$ is computed as follows.
\[W=-\phi(q) i(x_1^3+x_2^3+x_3^3)+\psi(q) i x_1x_2x_3\]
where $\phi$ and $\psi$ are elements of Novikov ring as follows
\begin{equation}\label{phipsi}
 \phi(q):= \sum_{k\in \mathbb{Z}}(-1)^{k+1}(k+\frac{1}{2})q^{(6k+3)^2}, \;\; \psi(q):=\sum_{k\in \mathbb{Z}}(-1)^{k+1}(6k+1)q^{(6k+1)^2}.\end{equation}
Dual group $\WH{G}$-action are given as follows. Define \[ \cchi := e^{2\pi i/3},\]
and denote elements of $\WH{G}$ by  $\{ 1,\chi,\chi^2\}$ where $\chi \cdot x_i= \cchi x_i$. We remark that the mirror potential given in \cite{E} and \cite{S} is just $W_{alg}=x_1^n+x_2^n+x_3^n + x_1x_2x_3$ whereas the disc potentials in \cite{CHL} and \cite{CHKL} are quantum corrected ones and they are infinite series for hyperbolic cases. Orbifold Jacobian algebra
structure for $\Jac(W_{alg},\Z/n)$ has been computed in the appendix of \cite{Shk}.

We are interested in the Kodaira-Spencer map from $QH^*(T^2)$ to $\Jac(W,\Z/3)$ (including the quantum corrections). Define an $\AI$-algebra 
\[ \CB:= CF\big((\bL,b(x)),(\bL,b(x))\big)\otimes \Lambda[x_1,x_2,x_3].\]
Since the Seidel Lagrangian for $\mathbb{P}^1_{3,3,3} = [T^2 / \Z/3]$ satisfies Assumption \ref{as2}, we have shown the following
sequence of isomorphisms (except the first map).
\begin{equation}\label{eq:kst2}
QH^*(T^2) \stackrel{\mathfrak{ks}}{\to} H^*\big((\mathcal{B} \rtimes \Z/3)^{\Z/3}\big)_{alg} \cong H^*(MF_{\Z/3}(W), MF_{\Z/3}(W)) \cong \Jac(W,\Z/3).
\end{equation}
We will show that $\mathfrak{ks}$ is an isomorphism and furthermore we will find a precise correspondence as follows.
First, let us introduce some notations for $QH^*(T^2)$ (which equals $H^*(T^2)$ since there are no non-trivial holomorphic spheres).
Let $\be_{T^2}$ be the  fundamental class and $\mathrm{pt}_{T^2}$ be the Poincar\'e dual of the point class in $QH^*(T^2)$.
In addition, we denote Poincar\'e duals of the meridian $C_h$ and
longitude $C_v$ by $[C_h], [C_v]$ respectively. We will take them as in the above Figure \ref{fig:2torus}. Note that
$g \cdot [C_h] = [C_v], g \cdot [C_v]= -[C_h]-[C_v]$.
 Therefore,  $[C_h] - \cchi [C_v], [C_v] - \cchi[C_h]$ are cohomology classes in $\chi$ and $\chi^2$-sector of $H^*(T^2)$ respectively.
\begin{theorem}\label{KS} 
Let the composition map be called $\ks$, by abuse of notation. Then
$$\ks: QH^*(T^2) \to \Jac(W,\Z/3)$$ 
 in \eqref{eq:kst2} is given by 
\begin{eqnarray*}
\be_{T^2} & \mapsto & 1\\
\mathrm{pt}_{T^2} & \mapsto & \frac{1}{24}q\frac{\partial W_\bL}{\partial q} \\
l_\chi:=-i{\gamma} \cdot \frac{[C_h]-\chicheck [C_v]}{\chicheck-1} & \mapsto & \xi_\chi\\
l_{\chi^2}:=-i{\gamma} \cdot \frac{[C_v]-\chicheck [C_h]}{\chicheck-1} & \mapsto & \xi_{\chi^2}.\\
\end{eqnarray*}
Here, $\gamma$ is a modular form 
\begin{equation}\label{eq:g}
 \gamma=\sum_{k \in \Z} (-1)^{k+1} i q^{(6k+1)^2}.
 \end{equation}
\end{theorem}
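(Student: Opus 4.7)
The plan is to verify the four identities in turn, using that $\ks$ is a unital ring homomorphism (Theorem \ref{thm:ks2}), that its $\WH{\Z/3}$-equivariance respects the eigenspace/twisted-sector decompositions, and that for $\chi\neq 1$ the fixed locus $\mathrm{Fix}(\chi)=\{0\}$, so each twisted sector $\Jac(W|_{\mathrm{Fix}(\chi)})\cdot \xi_\chi$ is one-dimensional over $\Lambda$. The identity $\be_{T^2}\mapsto 1$ is then immediate from unitality.

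For $\mathrm{pt}_{T^2}$, I would apply the geometric definition \eqref{eq:defks} directly: the moduli of $J$-holomorphic discs with an interior constraint at a generic point of $T^2$ and one boundary output on $\bL$ consists, by dimension counting, of Maslov two discs --- exactly the discs contributing to the potential $W$. Each such disc contributes, after boundary deformation by $b$, a monomial in the $x_i$ weighted by its symplectic area. Since $q\partial_q$ pulls out exactly the exponent $\omega(\beta)/\omega(\Delta)$, and since Poincar\'e duality identifies this exponent (normalized by the area of the fundamental domain) with the algebraic intersection number of such a disc with a generic point, the output is a scalar multiple of $q\partial_q W\cdot \be$. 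The constant $\frac{1}{24}$ reflects the area normalization between the basic triangle $\Delta$ and the hexagonal fundamental domain of $T^2$, together with the $|\Z/3|=3$ orbit factor from lifting from $\mathbb{P}^1_{3,3,3}$ to $T^2$.

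For the twisted-sector classes, one first verifies that $l_\chi$ is a $\chi$-eigenvector: from $g\cdot[C_h]=[C_v]$ and $g\cdot[C_v]=-[C_h]-[C_v]$, together with $1+\cchi+\cchi^2=0$, one computes $g\cdot l_\chi = \cchi\, l_\chi$; analogously for $l_{\chi^2}$. Since the construction of $\ks$ in Theorem \ref{thm:ks2} is $\WH{G}$-equivariant, $\ks(l_\chi)\in\Lambda\cdot\xi_\chi$, and it remains to show the scalar equals $1$. By Proposition \ref{cohgenerator}, $\xi_\chi$ is represented under the identification \eqref{eq:mk2} by a chain whose top wedge-degree term is $-\partial_{\theta_1}\partial_{\theta_2}\partial_{\theta_3}$, so it suffices to match this leading term for $\ks(l_\chi)$.

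To extract the leading term I would compute $\ks([C_h])$ and $\ks([C_v])$ directly by counting $J$-holomorphic discs with boundary on $\WT L = L_1\cup L_g\cup L_{g^2}$ whose interior constraint meets the 1-cycle $C_h$ (respectively $C_v$) transversely. Minimal such strips assemble into a theta-series summing to the modular form $\gamma$ of \eqref{eq:g}, by arguments parallel to the enumerations of $\phi,\psi$ in \cite{CHL,CHLnc}. Projecting $[C_h]$ onto the $\chi$-eigenline $[C_h]-\cchi[C_v]$ then produces the prefactor $-i/(\cchi-1)$, matching the stated formula. The main technical obstacle is this last step: the explicit strip enumeration and the careful matching of signs, line-bundle holonomies on $\bL$, and roots of unity against Shklyarov's sign conventions as recorded in Section \ref{sec:ag} and Appendix \ref{algjac}. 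The underlying disc-counting technology, however, is identical to that developed in \cite{CHL,CHLnc}, so existing techniques should suffice.
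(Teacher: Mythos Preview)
Your overall strategy matches the paper: unitality handles $\be_{T^2}$; the point class is the Maslov-two disc count as in \cite{ACHL} (the $1/24$ arises because $\WT L$ cuts $T^2$ into $24$ equal minimal triangles, not from a hexagon/orbit factorization); and for the twisted sectors you correctly reduce, via $\WH{G}$-equivariance and the one-dimensionality of each $\Jac(W^\chi)\cdot\xi_\chi$, to determining a single scalar, which you then propose to fix via Proposition \ref{cohgenerator} by matching the top wedge-degree term.

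The gap is in where you locate $\gamma$. You propose it arises as a theta-series from nonconstant discs with interior constraint on $C_h$. That is not the mechanism. In the paper, $\ks([C_h])$ receives contributions only from \emph{constant} discs: it is the Poincar\'e dual of the four points $C_h\cap\WT L$. Each such point is then slid along $\WT L$ to a fixed Morse critical point $p$ (or $g\cdot p$, $g^2\cdot p$) by adding an $m_1^{b,b}$-exact interval; this homotopy produces the $x_iX_i$ correction terms through $m_2(b,I)$, but no $q$-series whatsoever. After applying $\Phi_1^{-1}$, the $\chi$-component of $\ks\big(\tfrac{[C_h]-\cchi[C_v]}{\cchi-1}\big)$ has $p$-coefficient exactly $1$ (equation \eqref{chicocycle}). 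The modular form $\gamma$ enters only through Proposition \ref{prop:ge}: the isomorphism realizing Assumption \ref{as2} uses the rescaled basis element $p^{new}=\gamma^e p$, and for $\PP^1_{3,3,3}$ one has $\gamma^e=\gamma\in\Lambda$. Since Proposition \ref{cohgenerator} identifies $\xi_\chi$ with the class whose top term is $-p^{new}=-\gamma p$, one must rescale by $-\gamma$ (and by $i$ for the $A_\infty$/dg sign convention) to hit $\xi_\chi$. In short, $\gamma$ is a basis-change factor coming from the Koszul identification of the matrix factorization, already computed in \cite{CHL}, not a Kodaira--Spencer strip enumeration; your proposed disc count would not produce it.
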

\begin{proof}

Let us first compute the products in $\Jac(W,\Z/3)$.
\begin{lemma}
In $\Jac(W,\Z/3)$, we have the following product 
$$\xi_\chi \bullet \xi_{\chi^2} = \sigma_{\chi,\chi^2} \cdot 1$$
where $\sigma_{\chi,\chi^2}$ can be computed (following the definition in \cite{Shk}) to be
\[ \sigma_{\chi,\chi^2}=\frac{(27\phi^3-\psi^3)i}{(1-\chicheck)^3}x_1x_2x_3 \in \Jac(W).\]
We have $\xi_\chi \bullet \xi_\chi =0$, $\xi_{\chi^2} \bullet \xi_{\chi^2} =0$ from the degree condition \eqref{degcond}.
\end{lemma}
We will prove this lemma in Appendix \ref{algjac}.

On the other hand, one can easily check that $l_\chi \cup l_\chi =0, l_{\chi^2} \cup l_{\chi^2} =0$.
Also, it is easy to compute that
\begin{equation}
l_\chi \cup l_{\chi^2} = -\gamma^2   \cdot \left( \frac{1+\chicheck}{1-\chicheck} \right) \cdot \mathrm{pt}_{T^2}.
\end{equation}
Now, we are ready to check that $\ks$ is a ring isomorphism. Observe that the Jacobian relation identifies monomials $x_1x_2x_3, x_1^3,x_2^3,x_3^3$ up to scaling and we can use it to show that $$\frac{1}{24}q\frac{\partial W}{\partial q} = 
i \frac{q}{24}\big(-\frac{\psi}{\phi}\frac{\partial\phi}{\partial q}+ \frac{\partial\psi}{\partial q}\big)x_1x_2x_3 \in \Jac(W).$$

Also, we need the following identity of modular forms, which is proved in \cite{CLS}.
\begin{equation}\label{eq:modular}
\gamma^2 q\Big(-\psi \frac{\partial \phi}{\partial q}+\phi \frac{\partial \psi}{\partial q}\Big)=-8\phi(27\phi^3-\psi^3).
\end{equation}
By this identity, we can show the most nontrivial part
$$\ks(l_\chi \cup l_{\chi^2}) = \ks(l_\chi) \bullet \ks(l_{\chi^2})$$
as follows:
 \begin{align*}
\ks(l_\chi \cup l_{\chi^2}) &= -\gamma^2   \cdot \left( \frac{1+\chicheck}{1-\chicheck} \right)  \ks(\mathrm{pt}_{T^2}) \\
&=  -\gamma^2   \cdot \left( \frac{1+\chicheck}{1-\chicheck} \right) i \frac{q}{24}\big(-\frac{\psi}{\phi}\frac{\partial\phi}{\partial q}+\frac{\partial\psi}{\partial q}\big)x_1x_2x_3 \\
&=\left( \frac{1+\chicheck}{1-\chicheck} \right) i  \frac{8(27\phi^3-\psi^3)}{24}x_1x_2x_3 \\
&=\frac{i(27\phi^3-\psi^3)}{(1-\chicheck)^3}x_1x_2x_3 = \xi_\chi \bullet \xi_{\chi^2}. 
\end{align*}
It finishes the proof of Theorem \ref{KS}.
\end{proof}

Now, let us explain each map in \eqref{eq:kst2} to justify our choice of $l_\chi$ and $l_{\chi^2}$.
 
 First we will investigate images of $\ks$ in $H^*((\mathcal{B} \rtimes \Z/3)^{\Z/3})$. Recall that $m_1^b$ preserves eigenspaces of $\WH{G}$-action.
Let us first consider the trivial sector $1 \in \WH{G}$ whose $m_1^b$ cohomology is given by
$$H^*\big((\mathcal{B} \otimes 1)^{\Z/3}, m_1^b\big) \cong H^*(\mathcal{B}, m_1^b)^{\Z/3} \cong \Jac(W)^{\Z/3}$$
Note that $\Jac(W)$ is generated by 8 elements $1,x_1,x_1^2,x_2,x_2^2,x_3,x_3^2,x_1 x_2 x_3$ and $\Z/3$ acts on
variables by multiplication of $3$rd root of unity.
 Hence  $\Jac(W)^{\Z/3}$ is generated by $1$ and $x_1 x_2 x_3.$

In fact, we can choose a specific generator using the $\ks$ map.
Since $\ks$ is a ring homomorphism, it sends the unit of quantum cohomology to the unit $\be_L \otimes 1$.
The class of $x_1 x_2 x_3$ comes from $\ks$ map of the point class of $T^2$. 
In \cite{ACHL}, they showed that $\ks$ map is an isomorphism for $\mathbb{P}^1_{3,3,3}$ and computed $\ks(\mathrm{pt}_{\PP^1_{3,3,3}})$. The same computation applies to our case and we have
\[ \ks(\mathrm{pt}_{T^2})=\frac{1}{24}q\frac{\partial W}{\partial q}\cdot (\be_L\otimes 1).\]
To prove this, one divides a point class into 1/24 point classes each of which lies in different pieces of $T^2$ separated by the Seidel Lagrangian. Since each piece is set to have equal area $q$, count of disks with a point insertion can be compared to the area of the corresponding disc. This gives the above result. 

To find cohomology classes in $\chi, \chi^2$-sectors of $QH^*(T^2)$ that matches with $\xi_\chi, \xi_{\chi^2}$,
we start with Kodaira-Spencer images of two circles $[C_h], [C_v]$ generating $QH^1(T^2)$.
Note that $C_h$ intersect $\bL$ at 4 points
and hence $\mathfrak{ks}([C_h])$ is given by Poincare duals of these 4 points(see Figure \ref{fig:2torus}). 
We wish to compare these points, but one should note that  two different points in $\bL$ are not $m_1^{b,b}$
cohomologous in general. Let us argue in terms of singular chains(which can be made into that of differential forms). For a line segment $I$ connecting two points $p,p'$ in $\bL$ (in the domain of immersion
and oriented as $\bL$),
$m_1^{b,b}(I)$ should have the classical term $\partial I = p' - p$ as well as the constant disc contributions coming from
$m_2(b,I), m_2(I,b)$. Since $b$ comes from immersed sector, only one of these two are composable, and the composable
ones can  be computed following the case of the unit identity 
\[m_2(b,1_\bL) = -b,\;\; m_2(1_\bL,b) =b.\]
Using this idea, we compare each of 4 intersection points of $C_h \cap \bL, C_v \cap \bL$ with chosen minimum points $p,\; g\cdot p,\; g^2\cdot p$
of the Morse function on $L_1,L_g,L_{g^2}$ respectively (see Figure \ref{fig:2torus}). 
From the Figure \ref{fig:2torus}, it is not hard to obtain the following.
\begin{prop}
We have
\[ \ks([C_h])=2g\cdot p-p-g^2\cdot p+x_2(g^2\cdot X_{2}-X_{2})+x_3(g\cdot X_{3}-X_{3}),\]
\[ \ks([C_v])=2g^2\cdot p-p-g\cdot p+ x_2(X_2-g\cdot X_{2})+x_3(g^2\cdot X_{3}-g\cdot X_{3}). \]
\end{prop}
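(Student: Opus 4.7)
My plan is to work at the cochain level. Via the isomorphism of Lemma~\ref{lem:isobr}, the algebra $\mathcal{B} \rtimes \WH{G}$ is identified with $\hom(\WT{L},\WT{L}) \otimes \Lambda[x_1,x_2,x_3]$, and $\ks([C_h])$ is computed by \eqref{eq:defks} with $b$ replaced by $\WT{b}$. I would then rewrite the resulting cochain in the chosen Morse basis $\{p, g\cdot p, g^2 \cdot p\}$ modulo $m_1^{\WT{b}}$-boundaries. The formula for $\ks([C_v])$ then follows from the $\Z/3$-equivariance of all moduli spaces together with the identity $g \cdot [C_h] = [C_v]$, so the content is concentrated on $[C_h]$. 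The leading (constant-disc) contribution to $\ks([C_h])$ comes from the $\beta=0$, $k=0$ term of \eqref{eq:defks}, whose output is supported on $C_h \cap \WT{L}$; from Figure~\ref{fig:2torus} this consists of four points, two on $L_g$ and one each on $L_1$ and $L_{g^2}$, whose local intersection signs combine to the coefficients $2,-1,-1$ appearing in the statement.

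The next step is to cohomologically replace each Poincar\'e dual $\delta_q$, for $q \in L_k \cap C_h$, by $\delta_{k \cdot p}$. I would choose the oriented path $I_q \subset L_k$ from $k\cdot p$ to $q$ indicated by the arrows in Figure~\ref{fig:2torus} and expand
$$m_1^{\WT{b}}(I_q) = \partial I_q + m_2(\WT{b}, I_q) + m_2(I_q, \WT{b}) + (\text{higher order}).$$
The classical piece produces $\delta_q - \delta_{k \cdot p}$. Each transverse crossing of $I_q$ with another branch of $\WT{L}$ at a lift of an immersed sector of $\bL$ contributes a constant triangle, which by the unit-with-immersed-sector identities $m_2(1_L,b)=b$ and $m_2(b,1_L)=-b$ evaluates to $\pm\, x_i \cdot (g^j \cdot X_i)$ for the appropriate $i,j$. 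The higher-order terms involve non-constant discs and yield only $m_1^{\WT{b}}$-exact summands, so they drop out of the cohomology class.

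Finally comes the geometric bookkeeping from Figure~\ref{fig:2torus}. Each of the four paths $I_q$ crosses exactly one self-intersection of $\WT{L}$, and those crossings involve precisely the generators $X_2$ and $g^2 \cdot X_2$ for the two paths ending on $L_g$, and $X_3$ and $g \cdot X_3$ for the paths on $L_1$ and $L_{g^2}$. Assembling the data produces the stated formula for $\ks([C_h])$. I expect the main technical hurdle to be the sign tracking: one must consistently combine (i) the local intersection sign $\epsilon(q) \in \{\pm 1\}$ at each $q \in C_h \cap \WT{L}$, (ii) the orientation of $I_q$ against that of $\bL$ at each immersed crossing, and (iii) the extra sign coming from the $(-1)$-holonomy spin structure on $\bL$ that is encoded in $m_2(b,1_L) = -b$. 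Once these three contributions are consistently reconciled, the $\pm 1$ coefficients in front of $x_2(g^2 \cdot X_2 - X_2)$ and $x_3(g \cdot X_3 - X_3)$ fall out of the computation as predicted.
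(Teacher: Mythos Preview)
Your proposal is correct and follows essentially the same approach as the paper: the paper likewise observes that the constant-disc term of $\ks([C_h])$ is the Poincar\'e dual of the four points $C_h\cap\WT{L}$, then moves each of these points to the chosen Morse minimum on its branch via a segment $I$, using $m_1^{b,b}(I)=\partial I + m_2(b,I)+m_2(I,b)$ together with the unit identities $m_2(b,1_\bL)=-b$, $m_2(1_\bL,b)=b$ to record the immersed-sector corrections, and reads everything off Figure~\ref{fig:2torus}. Your use of $\Z/3$-equivariance and $g\cdot[C_h]=[C_v]$ to deduce the second formula is a small but legitimate shortcut over the paper's direct computation of both from the figure.
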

Then, by Lemma \ref{lem:qq}, we have
\begin{align*}
 \Phi_1^{-1}\big(\ks([C_h])\big)&=\chicheck p\otimes\chi+\chicheck^2 p\otimes\chi^2\\
 &+\frac{\big((\chicheck^2-1)x_2X_2+(\chicheck-1)x_3X_3\big)\otimes\chi+ \big((\chicheck-1)x_2X_2+ (\chicheck^2-1)x_3X_3\big)\otimes\chi^2}{3}, 
 \end{align*}
\begin{align*}
 \Phi_1^{-1}\big(\ks([C_v])\big)&=\chicheck^2 p\otimes\chi+ \chicheck p\otimes\chi^2\\
 &+\frac{\big((1-\chicheck)x_2X_2+(\chicheck^2-\chicheck)x_3X_3\big)\otimes\chi+\big((1-\chicheck^2)x_2X_2+(\chicheck-\chicheck^2)x_3X_3\big)\otimes\chi^2}{3}. 
 \end{align*}
Hence, we have
\begin{equation}\label{chicocycle} \Phi_1^{-1}\Big(\ks\big(\frac{[C_h]-\chicheck[C_v]}{\chicheck-1}\big)\Big)= \big(p +\frac{\chicheck^2}{\chicheck-1}x_2X_2-\frac{1}{\chicheck-1}x_3X_3\big)\otimes\chi
\end{equation}
\begin{equation}\label{chi2cocycle} \Phi_1^{-1}\Big(\ks\big(\frac{[C_v]-\chicheck[C_h]}{\chicheck-1}\big)\Big)= \big(p -\frac{\chicheck^2}{\chicheck-1}x_2X_2+\frac{\chicheck}{\chicheck-1}x_3X_3\big)\otimes\chi^2.
\end{equation}
Note that $([C_h]-\chicheck[C_v])$ and $([C_v]-\chicheck[C_h])$ are in fact $\chi$ and $\chi^2$-eigenvectors for $\Z/3$-action on $H^1(T^2)$, and the above illustrates that $\ks$ preserves eigenspaces.

Let us check that \eqref{chicocycle} and \eqref{chi2cocycle} are indeed nontrivial cocycles in $\mathcal{B} \rtimes \Z/3$.
From Lemma \ref{lem:twisteddiff}, we may consider the chain complex $\big(hom(\OL{L},\OL{L})\otimes R,m_1^{b(\chi^{-1}),b} \big)$. 

%

By \eqref{mfabc} with substitution $(x,y) \mapsto (\chi^{-1}\cdot x,y)$, we have
\begin{align*}
m_1^{b(\chi^{-1}),b}(e)&=\sum_{1\leq i \leq 3}(1-\check{\chi}^2)x_i X_i,\\
m_1^{b(\chi^{-1}),b}(\bar{X}_1^{new})&= (1-\check{\chi}^2)x_1 p^{new}-c_{12}X_2-c_{13}X_3,\\
m_1^{b(\chi^{-1}),b}(\bar{X}_2^{new})&= (1-\check{\chi}^2)x_2 p^{new}+c_{12}X_1-c_{23}X_3,\\
m_1^{b(\chi^{-1}),b}(\bar{X}_3^{new})&= (1-\check{\chi}^2)x_3 p^{new}+c_{13}X_1+c_{23}X_2.
\end{align*}
Thus, for any linear combination of $\{e,\bar{X}_1^{new},\bar{X}_2^{new},\bar{X}_3^{new}\}$, its image of $m_1^{b(\chi^{-1}),b}$ cannot be $p+\frac{\check{\chi}^2}{\check{\chi}-1}x_2 X_2-\frac{1}{\check{\chi}-1} x_3 X_3$, because the coefficient of $p$ of the image is always a polynomial with zero constant term. In the same way, we can also prove that $p-\frac{\check{\chi}^2}{\check{\chi}-1}x_2 X_2+\frac{\check{\chi}}{\check{\chi}-1} x_3 X_3$ is not an image of $m_1^{b(\chi^{-2}),b}$. This proves that \eqref{chicocycle} and \eqref{chi2cocycle} are nontrivial cocycles.
%
%
%

Since $\Jac(W,\HG)_\chi$ and $\Jac(W,\HG)_{\chi^2}$ are 1-dimensional in this case, and same is true for $\chi$ and $\chi^2$-eigenspaces of $H^1(T^2)$. Therefore, we find that Kodaira-Spencer map sends $\chi$ and $\chi^2$-sector of $QH^*(T^2)$ to the following spaces as a bijection.
\[H^*(\mathcal{B}\otimes \chi,m_1^{b\otimes 1}) \cong \Jac(W,\HG)_\chi,\;\;H^*(\mathcal{B}\otimes \chi^2,m_1^{b\otimes 1}) \cong \Jac(W,\HG)_{\chi^2}.\] 

To find an exact generator that matches with $\xi_\chi$ and $\xi_{\chi^2}$, we use Proposition \ref{cohgenerator}.
Namely, we look for cocycles in $\big(hom(\OL{L},\OL{L}),m_1^{b(\chi^{-1}),b} \big)$ and $\big(hom(\OL{L},\OL{L}),m_1^{b(\chi^{-2}),b} \big)$ respectively that has the highest wedge degree terms  given by $-p^{new}=-\gamma p$.
 (The sign appears because $ |I_{\chi}|=|I_{\chi^2}|=3$.)
 Therefore, we modify  \eqref{chicocycle} and \eqref{chi2cocycle} to obtain
\[ -\gamma\cdot \frac{[C_h]-\chicheck [C_v]}{\chicheck-1},\; -\gamma\cdot\frac{[C_v]-\chicheck [C_h]}{\chicheck-1}  \in H^*(T^2)\]
respectively. It concludes the explanation of the choice of $l_\chi$ and $l_{\chi^2}$ in Theorem \ref{KS}.
\begin{remark}
We can remove $i=\sqrt{-1}$ in the statement of the theorem if we consider $H^*(T^2)_{alg}$ (or alternatively $\Jac(W,\Z/3)_{op}$).
\end{remark}

\appendix
\section{Algebraic computation of orbifold Jacobian product}\label{algjac}
In \cite[Appendix~A]{Shk}, Shklyarov computed orbifold Jacobian algebra structures for 
\[ W:=x_1^{2g+1}+x_2^{2g+1}+x_3^{2g+1}-x_1 x_2 x_3,\]
with the action by 
\[ G:=\{ (\zeta,\zeta,\zeta^{-2})\in (\mathbb{C}^*)^3 \mid \zeta^{2g+1}=1\}.\]
Following his computation, we compute products of $\Jac(W,\Z/3)$
for the quantum corrected potential
\[W=-\phi(q) i(x_1^3+x_2^3+x_3^3)+\psi(q) i x_1x_2x_3\]
where $\Z/3=\{1,\chi,\chi^2\}$ acts on $\Lambda[x_1,x_2,x_3]$ by $ \chi\cdot x_i=e^{2\pi i/3} x_i$.  Hence $\chicheck:=e^{2\pi i/3}$.

To compute $\sigma_{\chi,\chi'}$,  we first compute
$H_{W}(x,\chi\cdot x,x)$ and $H_{W,\chi}(x)$ for $\chi\in \widehat{G}$. Recall that
\[ H_{W}(x,y,z)=\sum_{1\leq i\leq j \leq 3} \big(\nabla_i^{y \to (y,{z})}\nabla_j^{x\to (x,y)}W\big)\theta_j \otimes \theta_i.\]

The constant $\sigma_{1,\chi}$ is easily computed as follows. One can check that $d_{1,\chi}=0$.
\begin{align*} 
\sigma_{1,\chi}&= \frac{1}{d_{1,\chi}!} \Upsilon\big( (\lfloor H_W(x,\chi\cdot x,x)\rfloor_{\chi}+\lfloor H_{W,1}(x)\rfloor_{\chi}\otimes 1 + 1\otimes \lfloor H_{W,\chi}(\chi\cdot x)\rfloor_{\chi})^{d_{1,\chi}}\otimes \partial_{\theta_{I_1}} \otimes \partial_{\theta_{I_\chi}}\big) \\
&= \Upsilon\big( (1\otimes 1) \otimes (1 \otimes \partial_{\theta_{I_\chi}})\big)=\Upsilon(1 \cdot \partial_{\theta_{I_\chi}})=1
\end{align*}
where the last identity is due to the definition of $\Upsilon$. We conclude that $\xi_1$ is the multiplicative identity of $\Jac'(W,\Z/3)$.

Now we compute $\sigma_{\chi,\chi'}$ for both $\chi$ and $\chi'$ are not $1$. We have $\sigma_{\chi,\chi}=0$ for $\chi\neq 1$, because
\[ d_{\chi,\chi}=\frac{d_\chi+d_\chi-d_{\chi^2}}{2}=\frac{3}{2}\notin \Z \]
Let us compute  $\sigma_{\chi,\chi^2}$. From the definition of differences, we have (for $j=1,2,3$)
\[ \big(\nabla_i^{y \to (y,z)}\nabla_i^{x \to (x,y)}W\big)(x,\chi\cdot x,x)\theta_j\otimes \theta_j=- \frac{3\phi i x_j}{1-\chicheck}\theta_j \otimes \theta_j, \]
\[ \sum_{1\leq i<j\leq 3}\big(\nabla_i^{y \to (y,z)}\nabla_j^{x \to (x,y)}W\big)(x,\chi\cdot x,x)\theta_j\otimes\theta_i= \psi i x_3 \theta_2\otimes \theta_1+\chicheck \psi ix_2 \theta_3 \otimes \theta_1+\psi ix_1 \theta_3\otimes \theta_2.\]
So we have 
\begin{align}
\begin{split}
 H_{W}(x,\chi\cdot x,x)=&-\frac{3\phi ix_1}{1-\chicheck}\theta_1 \otimes \theta_1-\frac{3\phi ix_2}{1-\chicheck}\theta_2 \otimes \theta_2-\frac{3\phi ix_3}{1-\chicheck}\theta_3 \otimes \theta_3\label{eq:Hw}\\
 &+\psi i x_3 \theta_2\otimes \theta_1+\chicheck \psi ix_2 \theta_3 \otimes \theta_1+\psi ix_1 \theta_3\otimes \theta_2.
 \end{split}
 \end{align}
We also compute
\begin{equation}\label{eq:Hwchi}
 H_{W,\chi}(x)=-\frac{\chicheck\psi ix_3}{1-\chicheck}\theta_1\theta_2-\frac{\chicheck^2\psi ix_2}{1-\chicheck}\theta_1\theta_3,
 \end{equation}
\begin{equation}\label{eq:Hwchi2}
 H_{W,\chi^2}(\chi\cdot x)=-\frac{\psi ix_3}{1-\chicheck^2}\theta_1\theta_2-\frac{\chicheck^2\psi ix_2}{1-\chicheck^2}\theta_1\theta_3,
 \end{equation}
Then $\sigma_{\chi,\chi^2}$ is the constant coefficient of the expression
\[ \frac{1}{3!} \Upsilon\big( (\lfloor H_{W}(x,\chi\cdot x,x)\rfloor_{1}+\lfloor H_{W,\chi}(x)\rfloor_{1}\otimes 1 + 1\otimes \lfloor H_{W,\chi^2}(\chi\cdot x)\rfloor_{1})^{3}\otimes \partial_{\theta_1}\partial_{\theta_2}\partial_{\theta_3} \otimes \partial_{\theta_1}\partial_{\theta_2}\partial_{\theta_3} \big).\]
If we denote coefficients of \eqref{eq:Hw}, \eqref{eq:Hwchi} and \eqref{eq:Hwchi2} by
\[ H_{W}(x,\chi\cdot x,x)=A_{11}\theta_1 \otimes \theta_1+A_{22}\theta_2 \otimes \theta_2+A_{33}\theta_3 \otimes \theta_3+A_{21} \theta_2\otimes \theta_1+A_{31} \theta_3 \otimes \theta_1+A_{32} \theta_3\otimes \theta_2\]
and
\[ H_{W,\chi}(x)=B_{12}\theta_1\theta_2+B_{13}\theta_1\theta_3,\; 
H_{W,\chi^2}(\chi\cdot x)=C_{12}\theta_1\theta_2+C_{13}\theta_1\theta_3,\]
then by definition of $\Upsilon$, we have
\[ \sigma_{\chi,\chi^2}=\lfloor A_{11}A_{22}A_{33}-A_{22}B_{13}C_{13}-A_{33}B_{12}C_{12}+A_{32}B_{12}C_{13}\rfloor_1.\]
Up to the relation
\[3\phi ix_1^3=3\phi ix_2^3=3\phi ix_3^3=\psi ix_1x_2x_3\]
given by Jacobian ideal of $W$, we can check that
\[ \sigma_{\chi,\chi^2}=\frac{(27\phi^3-\psi^3)i}{(1-\chicheck)^3}x_1x_2x_3 \in \Jac(W).\]
Hence, we obtain
\[ \xi_\chi \bullet \xi_{\chi^2}=\frac{(27\phi^3-\psi^3)i}{(1-\chicheck)^3}x_1x_2x_3.\]

\bibliographystyle{amsalpha}

\end{document}